\def\EquationsBySection{\def\theequation
{\thesection.\arabic{equation}}%
\@addtoreset{equation}{section}}
\font\tenmsx=msam10
\font\sevenmsx=msam7 \font\fivemsx=msam5 \font\tenmsy=msbm10
\font\sevenmsy=msbm7 \font\fivemsy=msbm5
\def\Bbb{\ifmmode\let\next\Bbb@\else
\def\next{\errmessage{Use \string\Bbb\space only in math mode}}\fi\next}
\def\Bbb@#1{{\Bbb@@{#1}}}
\def\Bbb@@#1{\fam\msyfam#1}
\newtheorem{theorem}{\bf Theorem}[section]
\newtheorem{lemma}[theorem]{\bf Lemma}
\newtheorem{proposition}[theorem]{\bf Proposition}
\newenvironment{proof}{Proof:}{\quad \hfill $\Box$\vspace{2ex}}
\newcommand{\refts}{\hat{\mathcal{T}}^*}
\newcommand{\refpsi}{\hat{\psi}}
\newcommand{\isqrt}[1]{{\left(#1\right)^{1/2}}}
\begin{document}
\title{\bf Nonconforming Finite Volume Methods for Second Order Elliptic Boundary Value Problems
}

\author{Yuanyuan Zhang\thanks{Corresponding author. Department of Mathematics and Information Science, Yantai University, Yantai
264005, P. R. China. E-mail address: yyzhang@ytu.edu.cn. Supported in part by the National Natural
Science Foundation of China under grant 11426193, by Shandong Province Natural Science Foundation under grant ZR2014AP003 and by Shandong Province Higher Educational Science and Technology Program under grant J14li07.
} \ and
Zhongying Chen\thanks{Guangdong Province Key Laboratory of Computational Science, School of Mathematics and Computational Sciences, Sun Yat-sen University, Guangzhou 510275, P. R. China. E-mail addresses: lnsczy@mail.sysu.edu.cn. Supported in part by the
Natural Science Foundation of China under grants 10771224 and
11071264.} }

\date{}
\maketitle

\begin{abstract}
This paper is devoted to analyze of nonconforming finite volume methods (FVMs), whose trial spaces are chosen as the nonconforming finite element (FE) spaces, for solving the second order elliptic boundary value problems. We formulate the nonconforming FVMs as special types of Petrov-Galerkin  methods and develop a general convergence theorem,
which serves as a guide for the analysis of the nonconforming FVMs.
As special examples, we shall present the triangulation based Crouzeix-Raviart (C-R) FVM  as well as the rectangle mesh based hybrid Wilson FVM. Their optimal error estimates in the mesh dependent $H^1$-norm will be obtained under the condition that the primary mesh is regular.
For the hybrid Wilson FVM, we prove that it enjoys the same optimal error order in the $L^2$-norm as that of the Wilson FEM.
Numerical experiments are also presented to confirm the theoretical results.
\end{abstract}

{\bf Key Words.}  the nonconforming finite volume method

\vspace{2mm}{\bf AMS 2000 subject classifications.} 65N30, 65N12

\section{Introduction}
Preserving certain local conservation laws and flexible algorithm constructions are the most attractive advantages of the FVM.  Due to its strengths,
the FVM has been widely used in
numerical solutions of PDEs, especially in computational fluid
dynamics, computational mechanics and hyperbolic problems
(cf. \cite{Em, LiJian, VM}).
In the past several decades, many researchers have studied
this method extensively and obtained some important results. We refer to \cite{BG, Cai, Cl, CWX, H, LCW, XZ} for an incomplete list of references.

Most of the existing work about FVMs for solving the second order elliptic boundary value problems focuses on the conforming schemes, which employ the standard conforming FE spaces as their trial spaces, see \cite{Bank, CXZ, ELL, F, L3} for triangulation based FVMs and \cite{LL2, Sch, ZZ} for rectangle mesh based FVMs.   There are little work about the nonconforming FVMs (cf. \cite{BR, CP1, CP2, ChouYe}).
A general construction of higher-order FVMs based on triangle meshes was
proposed in a recent paper \cite{CXZ} for solving the second order elliptic boundary problems and a unified
approach for analyzing the methods was developed.
We feel it is necessary to establish a unified theoretical framework for the nonconforming FVMs for solving boundary value problems of the two dimensional elliptic
equations.

In this paper, we shall establish a convergence theorem  applicable to the nonconforming triangle mesh based FVMs as well as the rectangle mesh based FVMs for solving the second order elliptic boundary problems.
We will see that comparing with the conforming FVMs, verifying the uniform boundedness and the uniform ellipticity of the family of the discrete bilinear forms
is still a task for the nonconforming FVMs. Moreover, there is an additional nonconforming error to estimate.

As a special example, the C-R FVM will be presented in this paper, whose trial space is the C-R FE space  with respect to the primary triangulation (cf. \cite{CR}) and test space is spanned by the characteristic functions of the control volumes in the dual partition. Based on the C-R element, paper
\cite{CP1} considered the FVM for solving elliptic boundary problems in 2-D and obtained the optimal order error estimates in
the $L^2$-norm and a mesh dependent $H^1$-norm. The the reaction term of the elliptic equation there was not generalized by the Petrov-Galerkin formulation. Instead,  this term was discretized using a diagonal matrix. By virtue of the same discretization skill of the reaction term, paper \cite{BR} considered the FVM based on the C-R element for the non-self-adjoint and indefinite elliptic
problems and proved the existence,
uniqueness and uniform convergence of the FV element approximations under minimal elliptic regularity assumption.
In the nonconforming FVM schemes presented in this paper, we employ the generalization of the Petrov-Galerkin formulation
to get the discrete bilinear forms. This will be beneficial to the development of a general framework for the numerical analysis of the methods.
We will prove
two discrete norm inequalities which lead to the uniform boundedness of the family of the discrete bilinear forms and we will establish the uniform ellipticity of the family of the discrete bilinear forms. We also show that the nonconforming error is equal to zero and in turn get the optimal error estimate in the mesh dependent $H^1$-norm for the C-R FVM.

Another special example, the hybrid Wilson FVM, will also be presented in this paper. The trial space of the hybrid Wilson FVM is the Wilson FE space with respect to the primary rectangle mesh and test space is panned by the characteristic functions of the control volumes combined with certain linearly independent
functions of the trial spaces. The hybrid FVM was initially constructed for a triangulation based quadratic FVM in \cite{Cl} and further studied in \cite{CXZ}.
We will show that the convergence order of the hybrid Wilson FVM in the mesh dependent $H^1$-norm is $O(h)$, the same as that for the Wilson FE method (cf. \cite{Sh}). The discrete bilinear form of the FVM is dependent on the meshes which introduce a major obstacle for the $L^2$-norm error estimate of the hybrid Wilson FVM. We note that the test space of the hybrid Wilson FVM is produced by the piecewise constant functions with respect to the dual partition  and the nonconforming functions of the trial space. Then, we may borrow some useful techniques used for the $L^2$-error estimate of the lower-order FVM (\cite{LL2}) and the Wilson FEM (\cite{Sh}). We will verify that the convergence order of the hybrid Wilson FVM in the $L^2$-norm is $O(h^2)$, the same as that for the Wilson FE method.

The remainder of this paper is organized as follows. In section 2, we describe the framework of the nonconforming FVMs for the second order elliptic
boundary value problems and develop a convergence theorem. Sections 3 and 4 are devoted to the discussion of the C-R FVM and the hybrid Wilson FVM respectively. Their discrete norm inequalities will be proved, nonconforming error term will be estimated and uniform ellipticity will be established. Then, their the optimal error estimates in the mesh dependent $H^1$-norm are derived, respectively. In section 5, we discuss the $L^2$-norm error estimate for the hybrid Wilson FVM for solving the Poisson equation. In the last section, we present a numerical example to confirm the convergence results in this paper.

In this paper, the notations of Sobolev spaces and associated norms are the same as those in \cite{Ciarlet} and $C$ will denote a generic positive constant independent of meshes and may be different at different occurrences.

\section{The Nonconforming FVMs for Elliptic Equations}\label{sec_schemes}

Let $\Omega$ be a polygonal domain in $\mathbb{R}^2$ with boundary
$\partial \Omega$. Suppose that  ${\bf a}:=[a_{ij}(x)]$ is a
$2\times 2$ symmetric matrix of functions $a_{ij}\in W^{1,
\infty}(\Omega)$ and $f\in
L^2(\Omega)$ and that $b$ is a smooth, nonnegative and real function. We consider the Dirichlet problem of the second order
 partial differential equation
\begin{equation}\label{eq:poisson_equation}
\left\{
\begin{array}{ll}
-\nabla\cdot (\mathbf{a}\nabla u)+bu=  f, &  \quad  \textrm{in } \Omega, \\
u =  0, & \quad  \textrm{on } \partial \Omega,
\end{array}
\right.
\end{equation}
where  $u$ is the unknown to be determined.
We assume that the
coefficients in equation \eqref{eq:poisson_equation} satisfy the
elliptic condition
$\sum\limits_{i,j=1}^2 a_{ij}(x)\xi_i\xi_j \ge r \sum\limits_{j=1}^2
\xi_j^2$,  for some $r>0$,  for all
$(\xi_i,\xi_j)\in \mathbb{R}^2$ and  all  $x\in
\Omega.$

Let $\mathcal{T}:=\{K\}$ be a partition of ${\Omega}$
in the sense that different elements in $\mathcal {T}$ have no overlapping interior,
vertices of $K$ in $\mathcal {T}$ do not belong to the
interior of an edge of any other elements in $\mathcal {T}$ and
$
\bar{\Omega}=\bigcup_{K\in \mathcal{T}}K
$. It may be a triangulation or a rectangle partition. Let $\mathcal {T}^*$ be another partition of $\Omega$ associated with $\mathcal {T}$, which is called a dual partition of $\mathcal {T}$.
Associated with $\mathcal {T}$ and $\mathcal {T}^*$, we define respectively the space
$$
\mathbb{H}^2_{\mathcal{T}}(\Omega):=\{v: v\in L^2(\Omega), v|_{K}\in \mathbb{H}^2(K),\
{\rm for \ all}\ K\in \mathcal{T}\}
$$
and the space
$$
\mathbb{H}^1_{\mathcal{T}^{*}}(\Omega):=\{v: v\in L^2(\Omega), \ v|_{K^*}\in
\mathbb{H}^1(K^*),\ {\rm for \ all}\ K^*\in \mathcal{T}^{*},\  \textrm{and}\  v|_{\partial\Omega} = 0\}.
$$
We introduce the discrete bilinear form for $w\in
\mathbb{H}^2_{\mathcal{T}}(\Omega)$ and $v\in \mathbb{H}^1_{\mathcal{T}^{*}}(\Omega)$ by setting
\begin{equation}\label{biliner_form}
a_{\mathcal{T}}(w,v) :=\sum_{K\in\mathcal{T}}a_K \left(w, v\right)
\end{equation}
where
$$
a_K \left(w, v\right):=\sum\limits_{K^{*}\in
\mathcal{T}^{*}} \left\{ \int_{K^{*}\cap K} \big(\nabla w^T \mathbf{a}
\nabla v+bwv\big)  - \int_{\partial K^{*}\cap \mathrm{int} K} v(\mathbf{a}\nabla w)\cdot \mathbf{n}
\right\}
$$
and
$\mathbf{n}$ is the outward unit normal vector on $\partial
K^{*}$.
Employing the Green formula on the dual elements, we can show for $w\in \mathbb{H}_0^1(\Omega) \cap
\mathbb{H}^2(\Omega)$ and $v\in \mathbb{H}^1_{\mathcal{T}^{*}}(\Omega)$ that
$$
    a_{\mathcal{T}}(w,v) = \int_{\Omega} \big(-\nabla\cdot (\mathbf{a}\nabla w)+bw\big)v.
$$
The variational form for \eqref{eq:poisson_equation} is written
as finding $u\in \mathbb{H}_0^1(\Omega)\cap \mathbb{H}^2_{\mathcal{T}}(\Omega)$ such
that
\begin{equation}\label{variationaleq}
a_{\mathcal{T}}(u, v) = (f, v), \ \ \mbox{for all} \
\ v\in \mathbb{H}^1_{\mathcal{T}^{*}}(\Omega).
\end{equation}

We introduce the nonconforming FVMs for solving \eqref{eq:poisson_equation}.
Choose the finite dimensional trial space $\mathbb{U}_\mathcal {T}\subset \mathbb{H}^2_{\mathcal{T}}(\Omega)$ as a standard nonconforming FE space with respect to $\mathcal {T}$.
We choose the finite dimensional test space $\mathbb{V}_{\mathcal{T}^*}$ such that $\dim \mathbb{V}_{\mathcal{T}^*}= \dim \mathbb{U}_\mathcal {T}$ and for all $K\in\mathcal{T}$ and all $K^*\in \mathcal{T}^{*}$, the functions in $\mathbb{V}_{\mathcal{T}^*}$ restricted on $K\cap K^*$ are polynomials and moreover, the characteristic functions of $K^*\in \mathcal{T}^{*}$ are contained in $\mathbb{V}_{\mathcal{T}^*}$.
The nonconforming FVM for solving (\ref{eq:poisson_equation}) is a finite-dimensional approximation scheme which finds
$u_\mathcal {T}\in \mathbb{U}_\mathcal {T}$ such that
\begin{equation}
\label{eq:NFVM_sheme} a_{\mathcal{T}}\left(u_\mathcal{T}, v\right) = \left(f, v\right), \ \
\mbox{for all} \ \ v\in \mathbb{V}_{\mathcal{T}^*}.
\end{equation}

In the rest of this section, we will establish a convergence theorem which serves as a guide for the numerical analysis of the nonconforming FVMs.
To this end, we do some preparations.
Let $h_K$ and $|K|$ be the
diameter and area of $K\in \mathcal {T}$ respevtively.
Let $\mathscr{T}:=\{\mathcal {T}\}$ denote a family of partitions of $\Omega$. Let $h$ be the largest diameter of $K\in \cup_{\mathcal {T}\in \mathscr{T}}\mathcal{T}$.
We say that the family $\mathscr{T}$ of the primary partitions is \emph{regular} if there exists a positive constant $\varrho$ such that for all $\mathcal {T}\in \mathscr{T}$ and all $K\in \mathcal {T}$
\begin{equation}\label{regularity}
\varrho_K \geq \varrho h_K,
\end{equation}
where by $\varrho_K$ we denote the diameter of the largest circle contained in $K$.
For each $K\in \mathcal {T}$, let $L^*_K$ denotes the dual gridlines contained in $K$.

By the trace inequality and the regularity of $\mathscr{T}$, we can derive the following lemma.
\begin{lemma}\label{line_intergral}
If $\mathscr{T}$ is regular, then for all $\mathcal{T}\in \mathscr{T}$, all $K\in \mathcal{T}$ and all $\ell^*\in L^*_K$ and for all $w \in \mathbb{H}^2_{\mathcal{T}}(\Omega)$
$$
\int_{\ell^*}|\nabla w|^2 ds \leq C h_K^{-1} (|w|_{1,K}^2 + h_K^2 |w|_{2,K}^2).
$$
\end{lemma}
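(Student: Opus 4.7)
The plan is a standard scaling argument passing through a reference element. Fix $\mathcal{T}\in\mathscr{T}$, $K\in\mathcal{T}$ and $\ell^*\in L_K^*$. Let $F_K(\hat{x})=B_K\hat{x}+b_K$ be the affine map sending a chosen reference element $\refk$ (triangle or rectangle, depending on whether $\mathcal{T}$ is a triangulation or a rectangle partition) onto $K$. Regularity of $\mathscr{T}$, together with the usual Ciarlet-type estimates, gives
$$
\|B_K\|\le Ch_K,\qquad \|B_K^{-1}\|\le Ch_K^{-1},\qquad C^{-1}h_K^2\le|\det B_K|\le Ch_K^2.
$$
Write $\hat{w}:=w\circ F_K$; then $\hat{\ell}^*:=F_K^{-1}(\ell^*)$ is a line segment inside $\refk$ whose geometry is determined by the (finitely many) combinatorial types of dual gridlines in $L_K^*$.

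Next, I would apply the standard trace inequality on $\refk$: for every $\hat{v}\in\mathbb{H}^1(\refk)$,
$$
\int_{\hat{\ell}^*}|\hat{v}|^2\,d\hat{s}\le C\bigl(\|\hat{v}\|_{0,\refk}^2+|\hat{v}|_{1,\refk}^2\bigr),
$$
with $C$ depending only on $\refk$ and the (finitely many) possible positions of $\hat{\ell}^*$. Applying this to each component of $\hat{\nabla}\hat{w}$ (which lies in $\mathbb{H}^1(\refk)$ since $w|_K\in\mathbb{H}^2(K)$) yields
$$
\int_{\hat{\ell}^*}|\hat{\nabla}\hat{w}|^2\,d\hat{s}\le C\bigl(|\hat{w}|_{1,\refk}^2+|\hat{w}|_{2,\refk}^2\bigr).
$$

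Finally, I would transport this inequality back to $K$ using the standard change-of-variables estimates. For the left-hand side, $\hat{\nabla}\hat{w}=B_K^T(\nabla w)\circ F_K$ and $ds=|J|d\hat{s}$ with $|J|\le\|B_K\|$, so
$$
\int_{\hat{\ell}^*}|\hat{\nabla}\hat{w}|^2\,d\hat{s}\ge C^{-1}h_K\int_{\ell^*}|\nabla w|^2\,ds.
$$
For the right-hand side, the usual seminorm scaling on $K$ gives $|\hat{w}|_{1,\refk}^2\le C|w|_{1,K}^2$ and $|\hat{w}|_{2,\refk}^2\le Ch_K^2|w|_{2,K}^2$. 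Combining the three estimates produces
$$
\int_{\ell^*}|\nabla w|^2\,ds\le Ch_K^{-1}\bigl(|w|_{1,K}^2+h_K^2|w|_{2,K}^2\bigr),
$$
which is the claim.

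The only delicate point is that the trace constant on $\refk$ must be independent of the particular $\ell^*$ chosen. This is harmless here: for any reasonable dual partition (barycenter-based, midpoint-based, etc.) the dual gridlines inside a primary cell $K$ are the affine images of finitely many fixed segments in $\refk$, so taking the maximum of the trace constants over those finitely many reference segments gives a single $C$. Everything else is a routine application of the Ciarlet-type scaling identities under the regularity assumption \eqref{regularity}.
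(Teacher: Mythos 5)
Your scaling argument is correct and is precisely the route the paper intends: it states only that the lemma follows ``by the trace inequality and the regularity of $\mathscr{T}$,'' and your proof fills in exactly that outline (affine map to the reference element, trace inequality there applied to the components of $\hat{\nabla}\hat{w}$, and the standard Ciarlet-type scaling of the seminorms and of the line element, with the powers of $h_K$ combining to give the stated $h_K^{-1}(|w|_{1,K}^2+h_K^2|w|_{2,K}^2)$). Your remark that the dual gridlines in $K$ pull back to finitely many fixed reference segments, so the trace constant is uniform, is the right way to handle the only delicate point.
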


For each $w \in \mathbb{H}^2_{\mathcal{T}}(\Omega)$, we define the semi-norms
$$
\|w\|_{1,\mathcal{T}} := \left(\sum_{K\in\mathcal{T}} \left| w
\right|_{1,K}^2 \right)^{1/2},  \quad   |w|_{2,\mathcal{T}} := \left(\sum_{K\in\mathcal{T}} \left| w
\right|_{2,K}^2 \right)^{1/2}.
$$
Usually, $\|\cdot\|_{1,\mathcal{T}}$ is a norm on the trial space $\mathbb{U}_\mathcal {T}$.
We introduce a discrete norm on the test space.
For any $v\in
\mathbb{V}_{\mathcal {T}^*}$, define
\begin{equation}\label{discretenorm_test}
|v|_{1,\mathbb{V}_{\mathcal {T}^*},K}: = \isqrt{\sum_{K^*\in\mathcal{T}^*}
|v|_{1,K^*\cap K}^2 + \sum_{\ell^*\in
L^*_K}|\ell^*|^{-1}\int_{\ell^*} [v]^2 }\ \ \mbox{and} \ \
|v|_{1,\mathbb{V}_{\mathcal {T}^*}}: = \isqrt{\sum_{K \in \mathcal{T}}
|v|_{1,\mathbb{V}_{\mathcal {T}^*},K}^2}.
\end{equation}
We assume that for all $\mathcal {T}\in \mathscr{T}$ and the associated $\mathcal {T}^*$ there exists linear mappings $\Pi_{\mathcal {T}^*}: \mathbb{U}_\mathcal {T}\rightarrow \mathbb{V}_{\mathcal {T}^*}$ with $\Pi_{\mathcal {T}^*} \mathbb{U}_\mathcal {T} = \mathbb{V}_{\mathcal {T}^*}$ satisfying the conditions that
\begin{equation}\label{boundedness_dis_norm1}
|\Pi_{\mathcal {T}^*}v|_{1,\mathbb{V}_{\mathcal {T}^*}} \leq C \|v\|_{1,\mathcal{T}}, \quad  \mbox{for all}  \ v\in \mathbb{U}_\mathcal {T}
\end{equation}
and
\begin{equation}\label{boundedness_dis_norm2}
\|\Pi_{\mathcal {T}^*}v\|_{0,\Omega} \leq C \|v\|_{1,\mathcal{T}}, \quad  \mbox{for all}  \ v\in \mathbb{U}_\mathcal {T}
\end{equation}

\begin{lemma}\label{boundedness}
If $\mathscr{T}$ is regular and the assumptions (\ref{boundedness_dis_norm1}) and (\ref{boundedness_dis_norm2}) hold, then there exists a positive constant $\gamma$ such that
for all $\mathcal {T}\in\mathscr{T}$, and for all $w \in \mathbb{H}^2_{\mathcal{T}}(\Omega)$  and  $v\in
\mathbb{U}_{\mathcal{T}}$
\begin{equation}\label{boudedness_ineq}
|a_{\mathcal{T}}(w,\Pi_{\mathcal{T}^{*}}v)| \leq \gamma (\|w\|_{0,\Omega}+\|w\|_{1,\mathcal {T}}+h|w|_{2,\mathcal{T}})\|v\|_{1,\mathcal {T}}.
\end{equation}
\end{lemma}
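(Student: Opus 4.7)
The plan is to decompose the bilinear form $a_{\mathcal{T}}(w,\Pi_{\mathcal{T}^*}v)$ into three natural pieces, estimate each using the properties of $\mathbf{a}$, $b$, and the discrete norms, and then invoke the two hypotheses on $\Pi_{\mathcal{T}^*}$ to replace norms of $\Pi_{\mathcal{T}^*}v$ by $\|v\|_{1,\mathcal{T}}$. Writing $a_{\mathcal{T}}(w,\Pi_{\mathcal{T}^*}v)=I+II-III$, where
$$
I=\sum_{K}\sum_{K^*}\int_{K^*\cap K}\nabla w^{T}\mathbf{a}\,\nabla(\Pi_{\mathcal{T}^*}v),\quad II=\sum_K\sum_{K^*}\int_{K^*\cap K}b\,w\,\Pi_{\mathcal{T}^*}v,\quad III=\sum_{K}\sum_{K^*}\int_{\partial K^*\cap \mathrm{int}\,K}\Pi_{\mathcal{T}^*}v\,(\mathbf{a}\nabla w)\cdot\mathbf{n},
$$
the first two pieces are handled by straightforward Cauchy--Schwarz. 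For $I$, the boundedness of $\mathbf{a}$ and Cauchy--Schwarz element-by-element give $|I|\le C\|w\|_{1,\mathcal{T}}\,|\Pi_{\mathcal{T}^*}v|_{1,\mathbb{V}_{\mathcal{T}^*}}$, which by assumption \eqref{boundedness_dis_norm1} is bounded by $C\|w\|_{1,\mathcal{T}}\|v\|_{1,\mathcal{T}}$. For $II$, the boundedness of $b$ together with \eqref{boundedness_dis_norm2} yields $|II|\le C\|w\|_{0,\Omega}\|\Pi_{\mathcal{T}^*}v\|_{0,\Omega}\le C\|w\|_{0,\Omega}\|v\|_{1,\mathcal{T}}$.

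The crux of the argument is the flux term $III$, which is the place where the jump component of $|\cdot|_{1,\mathbb{V}_{\mathcal{T}^*}}$ is actually used. The idea is to re-group the boundary integrals of the dual elements: for each $K\in\mathcal{T}$ and each interior dual segment $\ell^*\in L^*_K$, $\ell^*$ is shared by exactly two neighbouring dual cells, and since $w|_K\in\mathbb{H}^2(K)$ the flux $(\mathbf{a}\nabla w)\cdot\mathbf{n}$ has a single well-defined trace on $\ell^*$ while $\Pi_{\mathcal{T}^*}v$ may have two sides. Thus the contributions from the two adjacent $K^*$'s combine into
$$
\sum_{K^*}\int_{\partial K^*\cap \mathrm{int}\,K}\Pi_{\mathcal{T}^*}v\,(\mathbf{a}\nabla w)\cdot\mathbf{n}=\sum_{\ell^*\in L^*_K}\int_{\ell^*}[\Pi_{\mathcal{T}^*}v]\,(\mathbf{a}\nabla w)\cdot\mathbf{n}.
$$

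On each $\ell^*$, Cauchy--Schwarz gives $\int_{\ell^*}[\Pi_{\mathcal{T}^*}v](\mathbf{a}\nabla w)\cdot\mathbf{n}\le C\bigl(\int_{\ell^*}[\Pi_{\mathcal{T}^*}v]^2\bigr)^{1/2}\bigl(\int_{\ell^*}|\nabla w|^2\bigr)^{1/2}$, after which Lemma~\ref{line_intergral} bounds the second factor by $Ch_K^{-1/2}(|w|_{1,K}+h_K|w|_{2,K})$, while the first factor is rewritten as $|\ell^*|^{1/2}\bigl(|\ell^*|^{-1}\int_{\ell^*}[\Pi_{\mathcal{T}^*}v]^2\bigr)^{1/2}$; regularity of $\mathscr{T}$ ensures $|\ell^*|^{1/2}h_K^{-1/2}\le C$. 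Summing over the (uniformly bounded number of) segments $\ell^*\in L^*_K$ and then over $K\in\mathcal{T}$ with a final Cauchy--Schwarz yields
$$
|III|\le C(\|w\|_{1,\mathcal{T}}+h|w|_{2,\mathcal{T}})\,|\Pi_{\mathcal{T}^*}v|_{1,\mathbb{V}_{\mathcal{T}^*}},
$$
and one more application of \eqref{boundedness_dis_norm1} replaces the right-hand factor by $\|v\|_{1,\mathcal{T}}$. Adding the three estimates and absorbing constants into a single $\gamma$ gives \eqref{boudedness_ineq}.

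The main obstacle is the flux term $III$: identifying the correct algebraic cancellation that turns the one-sided boundary integrals into a sum of jumps across $L^*_K$, and then balancing the scalings of $|\ell^*|$ and $h_K$ via Lemma~\ref{line_intergral} so that the extra $h|w|_{2,\mathcal{T}}$ contribution appears exactly as stated, with no residual powers of $h$ left over. The remaining pieces are essentially bookkeeping once \eqref{boundedness_dis_norm1}--\eqref{boundedness_dis_norm2} are invoked.
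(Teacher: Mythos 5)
Your proposal is correct and follows essentially the same route as the paper: the same split into volume terms (handled by Cauchy--Schwarz plus assumptions (\ref{boundedness_dis_norm1}) and (\ref{boundedness_dis_norm2})) and a flux term rewritten as jump integrals over $\ell^*\in L^*_K$, estimated via the weighted Cauchy--Schwarz with $|\ell^*|^{\pm 1}$ and Lemma \ref{line_intergral}. The only cosmetic difference is that the paper keeps the $\nabla w^T\mathbf{a}\nabla v^*$ and $bwv^*$ integrals together in a single ``$a_{c,\mathcal{T}}$'' term rather than as your separate $I$ and $II$.
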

\begin{proof}
For each $\mathcal {T}\in\mathscr{T}$ and its associated $\mathcal{T}^{*}$ and each $v\in \mathbb{U}_{\mathcal{T}}$, let $v^*:=
\Pi_{\mathcal{T}^{*}}v$.
We note that
\begin{equation}\label{boundedness_mid4}
a_{\mathcal{T}}\left(w, v^*\right) = a_{c,\mathcal{T}}\left(w, v^*\right) + a_{d,\mathcal{T}}\left(w, v^*\right).
\end{equation}
where
$$
a_{c,\mathcal{T}}\left(w, v^*\right) := \sum_{K\in\mathcal{T}}\sum_{K^*\in \mathcal {T}^*} \int_{K^*\cap K}\big(\nabla w^T \mathbf{a}
\nabla v^*+bwv^*\big), \quad a_{d,\mathcal{T}}\left(w, v^*\right) := - \sum_{K\in\mathcal{T}}\sum_{\ell^*\in L^*_K}\int_{\ell^*}
[v^*](\mathbf{a}\nabla w)\cdot \mathbf{n}.
$$

We first estimate $a_{c,\mathcal{T}}\left(w, v^*\right)$.
By virtue of the Cauchy-Schwartz inequality, there holds
\begin{equation}\label{boundedness_mid1}
|a_{c,\mathcal{T}}\left(w, v^*\right)|\leq \|\mathbf{a}\|_{\infty} \|w\|_{1,\mathcal {T}} |v^*|_{1,\mathbb{V}_{\mathcal {T}^*}}+
 \left\|b\right\|_{L^{\infty}(\Omega)} \|w\|_{0,\Omega} \|v^*\|_{0,\Omega}.
\end{equation}
Combining (\ref{boundedness_mid1}) with the assumptions (\ref{boundedness_dis_norm1}) and (\ref{boundedness_dis_norm2}) yields
\begin{equation}\label{boundedness_mid6}
|a_{c,\mathcal{T}}\left(w, v^*\right)|\leq C  (\|w\|_{0,\Omega}+\|w\|_{1,\mathcal {T}})\|v\|_{1,\mathcal {T}}.
\end{equation}

We next estimate $a_{d,\mathcal{T}}\left(w, v^*\right)$.
Application of the Cauchy-Schwartz inequality gives that
\begin{equation}\label{boundedness_mid7}
|a_{d,\mathcal{T}}\left(w, v^*\right)|\leq |v^*|_{1,\mathbb{V}_{\mathcal {T}^*}}
\cdot
\left(\sum_{K\in\mathcal{T}}\sum_{\ell^*\in L_K^*} |\ell^*| \int_{\ell^*}
\big((\mathbf{a}\nabla w)\cdot \mathbf{n}\big)^2 ds\right)^{1/2}.
\end{equation}
Since $\mathscr{T}$ is regular, it follows from Lemma \ref{line_intergral} that
\begin{equation}\label{boundedness_mid2}
|\ell^*| \int_{\ell^*}
\big((\mathbf{a}\nabla w)\cdot \mathbf{n}\big)^2 ds \leq \|\mathbf{a}\|_{\infty}^2 h_K \int_{\ell^*} |\nabla w|^2 ds \leq C\|\mathbf{a}\|_{\infty}^2 (|w|_{1,K}^2 + h_K^2 |w|_{2,K}^2).
\end{equation}
Substituting (\ref{boundedness_mid2}) and the assumption (\ref{boundedness_dis_norm1}) into (\ref{boundedness_mid7}), we obtain
\begin{equation}\label{boundedness_mid3}
|a_{d,\mathcal{T}}\left(w, v^*\right)|\leq C
\left(\|w\|_{1,\mathcal {T}}+h|w|_{2,\mathcal{T}}\right) \|v\|_{1,\mathcal {T}}.
\end{equation}
Combining (\ref{boundedness_mid4}) with (\ref{boundedness_mid6}) and (\ref{boundedness_mid3}) yields the desired result of this lemma.
\end{proof}

If there exists a constant $\gamma>0$ independent of meshes such that
inequality (\ref{boudedness_ineq}) holds, we say that the family
$
\mathscr{A}_\mathscr{T}:=\{a_{\mathcal{T}}(\cdot,\Pi_{\mathcal {T}^*} \cdot):\mathcal {T}\in \mathscr{T}\}
$
of the discrete bilinear forms is {\em uniformly bounded}.  Lemma \ref{boundedness} shows that the regularity of $\mathscr{T}$ and the assumptions (\ref{boundedness_dis_norm1}) and (\ref{boundedness_dis_norm2}) are sufficient conditions for the uniform boundedness of $\mathscr{A}_\mathscr{T}$.
We furthermore assume that $\mathscr{A}_\mathscr{T}$
is {\em uniformly elliptic}, that is, there exists a constant $\sigma>0$ such that for all $\mathcal {T}\in \mathscr{T}$ and the associated $\mathcal {T}^*$
\begin{equation}\label{uniform_elliptic}
a_{\mathcal{T}}(w, \Pi_{\mathcal{T}^{*}}w)\ge \sigma
\|w\|_{1,\mathcal {T}}^2,
    \quad  \mbox{for all} \
w\in \mathbb{U}_\mathcal {T}.
\end{equation}

We present the convergence of the nonconforming FVMs.

\begin{theorem}\label{thm: convergence_theorem}
Let $u\in \mathbb{H}_0^1(\Omega)\cap \mathbb{H}^2(\Omega)$ be the solution of
(\ref{eq:poisson_equation}). If $\mathscr{T}$ is regular and the assumptions (\ref{boundedness_dis_norm1}), (\ref{boundedness_dis_norm2}) and (\ref{uniform_elliptic}) hold,
then for each $\mathcal {T}\in\mathscr{T}$ the FVM equation (\ref{eq:NFVM_sheme}) has a unique solution $u_\mathcal{T}\in \mathbb{U}_\mathcal{T}$, and there exists a
positive constant $C$ such that for all $\mathcal{T}\in\mathscr{T}$
\begin{equation}\label{thm: convergence_theorem_ineq}
\|u-u_\mathcal{T}\|_{1,\mathcal {T}}\le C\left(\inf_{w\in \mathbb{U}_\mathcal{T}} \left(\|u-w\|_{0,\Omega}+ \|u-w\|_{1,\mathcal {T}} + h|u-w|_{2,\mathcal {T}} \right) + \sup_{v\in \mathbb{U}_\mathcal{T}} \frac{E_\mathcal {T}(u, v)}{|v|_{1,\mathcal {T}}}\right).
\end{equation}
where
\begin{equation}\label{nonconforming error}
E_\mathcal {T}(u,v):= a_{\mathcal {T}}(u-u_\mathcal {T}, \Pi_{\mathcal {T}^*}v) = a_{\mathcal {T}}(u, \Pi_{\mathcal {T}^*}v)- (f, \Pi_{\mathcal {T}^*}v).
\end{equation}
\end{theorem}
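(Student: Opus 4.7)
The plan is a Strang-type quasi-optimality argument for the Petrov-Galerkin pair $(\mathbb{U}_\mathcal{T},\mathbb{V}_{\mathcal{T}^*})$, in which the failure of Galerkin orthogonality for the exact solution $u$ is captured precisely by the functional $E_\mathcal{T}(u,\cdot)$. First I would dispose of existence and uniqueness. Since $\dim\mathbb{V}_{\mathcal{T}^*}=\dim\mathbb{U}_\mathcal{T}$ and $\Pi_{\mathcal{T}^*}\mathbb{U}_\mathcal{T}=\mathbb{V}_{\mathcal{T}^*}$, the map $\Pi_{\mathcal{T}^*}$ is a linear bijection from the trial to the test space, so (\ref{eq:NFVM_sheme}) is equivalent to seeking $u_\mathcal{T}\in\mathbb{U}_\mathcal{T}$ with $a_\mathcal{T}(u_\mathcal{T},\Pi_{\mathcal{T}^*}v)=(f,\Pi_{\mathcal{T}^*}v)$ for every $v\in\mathbb{U}_\mathcal{T}$; this is a square finite-dimensional system, and testing the homogeneous version with $v=u_\mathcal{T}$ together with uniform ellipticity (\ref{uniform_elliptic}) forces $\|u_\mathcal{T}\|_{1,\mathcal{T}}=0$, hence $u_\mathcal{T}=0$ since $\|\cdot\|_{1,\mathcal{T}}$ is a norm on $\mathbb{U}_\mathcal{T}$.

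For the quasi-optimality bound, I would fix an arbitrary $w\in\mathbb{U}_\mathcal{T}$, set $\phi:=u_\mathcal{T}-w\in\mathbb{U}_\mathcal{T}$, and write $\phi=(u-w)-(u-u_\mathcal{T})$. Applying (\ref{uniform_elliptic}) to $\phi$ and exploiting bilinearity,
$$\sigma\|\phi\|_{1,\mathcal{T}}^2\le a_\mathcal{T}(\phi,\Pi_{\mathcal{T}^*}\phi)=a_\mathcal{T}(u-w,\Pi_{\mathcal{T}^*}\phi)-a_\mathcal{T}(u-u_\mathcal{T},\Pi_{\mathcal{T}^*}\phi),$$
and the identity in (\ref{nonconforming error}) rewrites the second term as $E_\mathcal{T}(u,\phi)$. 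Lemma \ref{boundedness} applied to the first term (with $u-w$ in place of $w$ and $\phi$ in place of $v$) yields the factor $\gamma(\|u-w\|_{0,\Omega}+\|u-w\|_{1,\mathcal{T}}+h|u-w|_{2,\mathcal{T}})\|\phi\|_{1,\mathcal{T}}$, while $|E_\mathcal{T}(u,\phi)|$ is bounded by $(\sup_{v\in\mathbb{U}_\mathcal{T}}E_\mathcal{T}(u,v)/|v|_{1,\mathcal{T}})\|\phi\|_{1,\mathcal{T}}$, using that $\phi\in\mathbb{U}_\mathcal{T}$ and that $|\cdot|_{1,\mathcal{T}}$ and $\|\cdot\|_{1,\mathcal{T}}$ agree (or are at least equivalent) on the trial space. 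Dividing through by $\|\phi\|_{1,\mathcal{T}}$, combining with the triangle inequality $\|u-u_\mathcal{T}\|_{1,\mathcal{T}}\le\|u-w\|_{1,\mathcal{T}}+\|\phi\|_{1,\mathcal{T}}$, and finally taking the infimum over $w\in\mathbb{U}_\mathcal{T}$ produces (\ref{thm: convergence_theorem_ineq}).

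At this abstract level the argument is essentially mechanical once (\ref{boundedness_dis_norm1})--(\ref{uniform_elliptic}) are in hand, so no real obstacle arises; all the substance has been packaged into the hypotheses and into the consistency supremum. The only bookkeeping point is the identification $a_\mathcal{T}(u-u_\mathcal{T},\Pi_{\mathcal{T}^*}\phi)=E_\mathcal{T}(u,\phi)$, which is an immediate consequence of testing the scheme (\ref{eq:NFVM_sheme}) against $\Pi_{\mathcal{T}^*}\phi\in\mathbb{V}_{\mathcal{T}^*}$. The genuine work is thus shifted to Sections 3--5, where for each concrete FVM one must verify uniform ellipticity (\ref{uniform_elliptic}) and produce a quantitative $O(h)$ (or better) bound on $\sup_{v\in\mathbb{U}_\mathcal{T}}E_\mathcal{T}(u,v)/|v|_{1,\mathcal{T}}$.
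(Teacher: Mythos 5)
Your proposal is correct and follows essentially the same route as the paper: uniqueness via testing the homogeneous system with $\Pi_{\mathcal{T}^{*}}u_{\mathcal{T}}$ and invoking (\ref{uniform_elliptic}), then the standard Strang-type argument applying uniform ellipticity to $w-u_{\mathcal{T}}$, splitting into an approximation term handled by Lemma \ref{boundedness} and a consistency term identified with $E_{\mathcal{T}}(u,\cdot)$, and finishing with the triangle inequality and an infimum over $w$. The only additions beyond the paper's proof are minor clarifications (the bijectivity of $\Pi_{\mathcal{T}^{*}}$ making the two formulations of the scheme equivalent, and the identification of $|\cdot|_{1,\mathcal{T}}$ with $\|\cdot\|_{1,\mathcal{T}}$ on the trial space), neither of which changes the argument.
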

\begin{proof}
Assume that (\ref{eq:NFVM_sheme}) with $f=0$ has a nonzero solution
$u_\mathcal{T}\in \mathbb{U}_\mathcal{T}$.
From (\ref{uniform_elliptic}), we get that
$$
0=a_{\mathcal{T}}(u_\mathcal{T},
\Pi_{\mathcal{T}^{*}}u_\mathcal{T})\ge \sigma \|u_\mathcal{T}\|_{1,\mathcal {T}}^2
\neq 0.
$$
This contradiction ensures that the linear system resulting from (\ref{eq:NFVM_sheme}) has a unique solution.

For all $w\in
\mathbb{U}_\mathcal{T}$,
\begin{equation}\label{thm: convergence_theorem_mid1}
\|u- u_\mathcal{T}\|_{1,\mathcal {T}}\le \|u- w\|_{1,\mathcal {T}}+ \|w- u_\mathcal{T}\|_{1,\mathcal {T}}.
\end{equation}
Condition (\ref{uniform_elliptic}) ensures that
$$
\sigma \|w- u_\mathcal{T}\|_{1,\mathcal {T}}^2 \le a_{\mathcal{T}}(w- u_\mathcal{T},
\Pi_{\mathcal{T}^{*}}(w- u_\mathcal{T})) =a_{\mathcal{T}}(w- u,
\Pi_{\mathcal{T}^{*}}(w- u_\mathcal{T}))+ a_{\mathcal{T}}(u- u_\mathcal {T},
\Pi_{\mathcal{T}^{*}}(w- u_\mathcal{T})),
$$
which implies
$$
\|w- u_\mathcal{T}\|_{1,\mathcal {T}}\le \sigma^{-1} \sup_{v\in
\mathbb{U}_\mathcal{T}}\frac{a_{\mathcal{T}}(w- u,
\Pi_{\mathcal{T}^{*}}v)}{\|v\|_{1,\mathcal {T}}} + \sigma^{-1} \sup_{v\in \mathbb{U}_\mathcal{T}} \frac{E_\mathcal {T}(u, v)}{\|v\|_{1,\mathcal {T}}}.
$$
Since $\mathscr{T}$ is regular and (\ref{boundedness_dis_norm1}) and (\ref{boundedness_dis_norm2}) hold, by Lemma \ref{boundedness}
we observe that
\begin{equation}\label{thm: convergence_theorem_mid2}
\|w- u_\mathcal{T}\|_{1,\mathcal {T}}\le \sigma^{-1}\gamma
\left(\|u-w\|_{0,\Omega}+ \|u-w\|_{1,\mathcal {T}} + h|u-w|_{2,\mathcal {T}} \right) + \sigma^{-1} \sup_{v\in \mathbb{U}_\mathcal{T}} \frac{E_\mathcal {T}(u, v)}{\|v\|_{1,\mathcal {T}}}.
\end{equation}
From (\ref{thm: convergence_theorem_mid1}) and (\ref{thm: convergence_theorem_mid2}), we conclude that the desired inequality
(\ref{thm: convergence_theorem_ineq}) holds
with $C:=\max\{1+\sigma^{-1}\gamma, \sigma^{-1}\}$.
\end{proof}

Comparing with the error estimate inequality of the conforming FVMs (cf. Theorem 4.2 of \cite{CXZ}), the error estimate inequality in Theorem \ref{thm: convergence_theorem} for the nonconforming FVMs has one term (\ref{nonconforming error}) more, which is called \emph{the nonconforming error} term. This term is produced by the nonconforming character of the trial spaces.
Since the discrete bilinear forms of the nonconforming FVMs are dependent on the grids, similar as the conforming FVMs, verifying the the uniform ellipticity of the family of the discrete bilinear forms is still a task for the nonconforming FVMs.

In the following two sections, we shall present and analyze two specific nonconforming FVM schemes for solving the equation (\ref{eq:poisson_equation}) respectively.

\section{The C-R FVM}

In this section, we first present the scheme of the C-R FVM. We then verify the discrete norm inequalities (\ref{boundedness_dis_norm1}) and (\ref{boundedness_dis_norm2}), establish the uniform ellipticity of the family of the discrete bilinear forms and discuss the nonconforming error term. In turn, the optimal error estimate of the C-R FVM is obtained according to Theorem \ref{thm: convergence_theorem}.

In the C-R FVM, the partition $\mathcal {T}$ is a triangulation of $\Omega$.
Any vertex of $\Omega$ is a vertex of a triangle in $\mathcal {T}$. We denote by $\mathcal {N}_{\mathcal {T}}$, $\mathcal {M}_{\mathcal {T}}$ and $\mathcal {Q}_{\mathcal {T}}$, respectively, the sets of vertices, midpoints of the edges
and barycenters of the triangles in $\mathcal {T}$. Let $\dot{\mathcal {N}}_{\mathcal {T}}:=\mathcal {N}_{\mathcal {T}}\setminus \partial\Omega$ and $\dot{\mathcal {M}}_{\mathcal {T}}:=\mathcal {M}_{\mathcal {T}}\setminus \partial\Omega$ be the set of interior vertices and interior midpoints, respectively.

The trial space $\mathbb{U}_\mathcal {T}$ of the C-R FVM is chosen as the classical C-R nonconforming finite element space, that is,
$$
\mathbb{U}_{\mathcal {T}}:= \{w\in L^2(\Omega): w\ \text{is linear on all}\ K\in \mathcal {T}, w\ \text{is continuous at}\ \dot{\mathcal {M}}_{\mathcal {T}}, w=0 \ \text{at}\ \mathcal {M}_{\mathcal {T}}\cap \partial\Omega \}.
$$
Obviously, $\mathbb{U}_\mathcal {T}$ is not in the space $\mathbb{H}_0^1(\Omega)$. The C-R FVM is a kind of nonconforming FVM.

We describe the dual partition $\mathcal {T}^*$ and the test space $\mathbb{V}_{\mathcal {T}^*}$.
For each $M\in \dot{\mathcal {M}}_{\mathcal {T}}$, suppose that it is on an edge denoted by $P_iP_j$ and that $P_iP_j$ is a common edge of the triangles $\Delta P_iP_jP_k$ and $\Delta P_jP_iP_k'$ in $\mathcal {T}$. Let $Q$ and $Q'$ be the barycenters of $\Delta P_iP_jP_k$ and $\Delta P_iP_jP_k'$ respectively. We connect the points $P_i$, $Q$, $P_j$, $Q'$ and $P_i$ consecutively to derive a quadrilateral $K^*_M$ surrounding the point $M$ (cf. Figure \ref{fig:dual_partition}). For each $M\in {\mathcal {M}}_{\mathcal {T}}\backslash \dot{\mathcal {M}}_{\mathcal {T}} $, following the same process, we derive a triangle $K^*_M$ associated the point $M$. Let $\mathcal {T}^*:=\{K^*_M: M\in \mathcal {M}_{\mathcal {T}}\}$. The elements in $\mathcal {T}^*$ are called control volumes.
The test space $\mathbb{V}_{\mathcal {T}^*}$ is defined as follows
$$
\mathbb{V}_{\mathcal {T}^*}:= \{v\in L^2(\Omega): w|_{K^*}= \textrm{constant}, \mbox{for all}\ K^*\in \mathcal {T}^*,  v|_{\partial\Omega}=0 \}.
$$
We note that $\mathbb{V}_{\mathcal {T}^*} \subseteq \mathbb{H}^1_{\mathcal{T}^{*}}(\Omega)$.

We use $\chi_{E}$ to denote the characteristic function of $E\subset\mathbb{R}^2$.
We define the invertible linear mapping
$\Pi_{\mathcal{T}^{*}}:\mathbb{U}_{\mathcal{T}} \to
\mathbb{V}_{\mathcal{T}^{*}}$ for any $w\in \mathbb{U}_{\mathcal{T}}$ by
$$
    \Pi_{\mathcal{T}^{*}}w:=\sum_{M\in \mathcal {M}_{\mathcal {T}}} w(M)\chi_{K^*_M}.
$$
Obviously, for each $w\in \mathbb{U}_{\mathcal{T}}$ and $K^*_M\in \mathcal {T}^*$, the restriction of $\Pi_{\mathcal{T}^{*}}w$ on $K^*_M$ is the constant function $w(M)$.

\begin{figure}[ht!]
\centering
\includegraphics[width=0.5\textwidth]{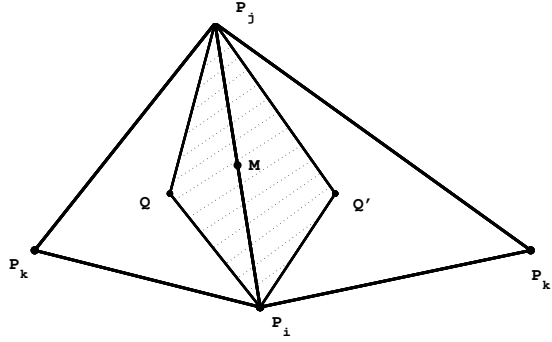}
\caption{The dual partition of the C-R FVM} \label{fig:dual_partition}
\end{figure}

For $w\in \mathbb{H}^2_{\mathcal{T}}(\Omega)$ and $v\in \mathbb{V}_{\mathcal {T}^*}$, from (\ref{biliner_form}), we derive the discrete bilinear form of the C-R FVM
\begin{equation}\label{bilinear_form_CR}
  a_K(w,v)= \sum\limits_{K^*\in\mathcal{T}^*} \left(\int_{K^*\cap K} bwv  -\int_{\partial
K^* \cap \mathrm{int} K}
 v(\mathbf{a}\nabla w)\cdot \mathbf{n}  \right),  \quad a_{\mathcal {T}}(w,v)= \sum_{K\in \mathcal {T}}a_K(w,v).
\end{equation}

\emph{Remark: }
In the FVM proposed in Paper \cite{CP1} for solving second order elliptic boundary value problems which is also based on the C-R element ,
the term $bu$ in (\ref{eq:poisson_equation}) is discretized using a diagonal matrix, that is, using $\Pi_{\mathcal{T}^{*}}w$ instead of $w$ in the term $\int_{K^*\cap K} bwv$ in (\ref{bilinear_form_CR}).
This processing may be viewed as producing an approximation of the discrete bilinear form given in (\ref{bilinear_form_CR}) and the theoretical framework given in Section \ref{sec_schemes} of this paper may cover the FVM scheme given in \cite{CP1}.



The remainder of this section is devoted to
the convergence analysis of the C-R FVM. According to Theorem \ref{thm: convergence_theorem}, we need to very conditions (\ref{boundedness_dis_norm1}), (\ref{boundedness_dis_norm2}) and (\ref{uniform_elliptic}) for the C-R FVM.
Given a $K\in \mathcal {T}$,
we denote the set of the sides of $K$ by $E(K)$ and let $m_{e}$ denote the midpoint of a side $e\in \bigcup_{K\in \mathcal {T}} E(K)$.
Note that for the C-R FVM the discrete norm for the test space defined in (\ref{discretenorm_test}) becomes
$$
|v|_{1,\mathbb{V}_{\mathcal {T}^*}} = \isqrt{\sum_{K \in \mathcal{T}} \sum_{e,l\in E(K)} \left(v(m_e)-v(m_l)\right)^2}.
$$

From Lemma 3.5 of \cite{CP1} and the definition of $\Pi_{\mathcal{T}^{*}}$, we derive that the norms $|\Pi_{\mathcal{T}^{*}}
\cdot|_{1,\mathbb{V}_{\mathcal {T}^*}}$ and $\|\cdot\|_{1,\mathcal {T}}$ are equivalent which implies (\ref{boundedness_dis_norm1}) for the C-R FVM.
\begin{lemma}\label{dnorm_equi}
There exist
positive constants ${c_1}$ and $c_2$ such that for all $\mathcal{T}\in\mathscr{T}$
and all $v\in \mathbb{U}_{\mathcal{T}}$,
$$
    c_1 \|v\|_{1,\mathcal {T}}\le
|\Pi_{\mathcal{T}^{*}}v|_{1,\mathbb{V}_{\mathcal {T}^*}} \le c_2
\|v\|_{1,\mathcal {T}}.
$$
\end{lemma}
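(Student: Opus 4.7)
The plan is to reduce the claimed global equivalence to a purely local norm comparison on a single triangle and then apply a standard affine-scaling argument.

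First, I would simplify $|\Pi_{\mathcal{T}^{*}}v|_{1,\mathbb{V}_{\mathcal{T}^*},K}$ element by element. On any $K\in\mathcal{T}$ the image $\Pi_{\mathcal{T}^{*}}v$ is piecewise constant on the collection $\{K^*\cap K : K^*\in\mathcal{T}^{*}\}$, taking the value $v(m_e)$ on the subregion of $K$ associated with the edge $e$ whose midpoint is $m_e$; hence the first sum in the definition of $|\cdot|_{1,\mathbb{V}_{\mathcal{T}^*},K}$ vanishes. From the construction of $\mathcal{T}^{*}$, the set $L^*_K$ consists of the three segments joining the barycenter of $K$ to its three vertices, and on each such segment the jump of $\Pi_{\mathcal{T}^{*}}v$ is a constant of the form $v(m_e)-v(m_l)$, where $e,l$ are the two edges of $K$ meeting at that vertex. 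The factor $|\ell^{*}|^{-1}\int_{\ell^{*}}$ then cancels the length, producing the simplified expression already displayed in the excerpt.

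Second, I would prove the local equivalence
\[
c_1\,|v|_{1,K}^2 \;\le\; \sum_{e,l\in E(K)} (v(m_e)-v(m_l))^2 \;\le\; c_2\,|v|_{1,K}^2 \qquad \text{for all } v\in P_1(K),
\]
with $c_1,c_2>0$ depending only on the shape-regularity constant $\varrho$. Both expressions are seminorms on the three-dimensional space $P_1(K)$ which annihilate constants. The right-hand one has kernel exactly $\mathbb{R}$: if $v(m_1)=v(m_2)=v(m_3)$, then $\nabla v\cdot(m_i-m_j)=0$ for all $i,j$, and since $m_i-m_j=\tfrac{1}{2}(P_j-P_i)$ the two linearly independent vectors $P_1-P_2$ and $P_2-P_3$ force $\nabla v=0$. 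Both quantities are also invariant under a homothety of $K$. Pulling back to a reference triangle $\hat K$ through an affine map and using the equivalence of any two norms on the two-dimensional quotient $P_1(\hat K)/\mathbb{R}$, together with the compactness of the set of shape-regular reference configurations, yields the stated uniform equivalence. This is the content of Lemma 3.5 of \cite{CP1}, which I would invoke directly.

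Finally, summing the local inequality over $K\in\mathcal{T}$ and using $\|v\|_{1,\mathcal{T}}^{2}=\sum_{K\in\mathcal{T}}|v|_{1,K}^{2}$ together with the formula derived in the first step gives the global equivalence claimed in the lemma.

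The only real obstacle is the uniformity of the constants in the second step; this is a routine affine-equivalence argument once the kernel of the midpoint-difference seminorm has been identified, and it is precisely where the shape regularity hypothesis on $\mathscr{T}$ is used. Everything else is bookkeeping.
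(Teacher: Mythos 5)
Your proposal is correct and follows essentially the same route as the paper, which simply reduces the claim to the local midpoint-difference seminorm and invokes Lemma 3.5 of \cite{CP1} without further argument. The extra detail you supply (identifying the jumps across the three barycenter-to-vertex segments, the kernel computation on $P_1(K)/\mathbb{R}$, and the affine-scaling step that makes the constants depend only on the shape-regularity parameter) is exactly the content of that cited lemma and is sound.
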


The next lemma is given in Lemma 3.7 of \cite{CP1}.
\begin{lemma}\label{dis_Poin_inequ}
There exists a positive constant $C$ such that for all $\mathcal{T}\in\mathscr{T}$ and all $v\in \mathbb{U}_{\mathcal{T}}$
$$
\|v\|_{0,\Omega} \leq C \|v\|_{1,\mathcal {T}}.
$$
\end{lemma}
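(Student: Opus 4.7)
The plan is to establish this discrete Poincaré--Friedrichs inequality by an Aubin--Nitsche style duality argument tailored to the nonconforming jumps across interior edges. By $L^2$ duality, it suffices to prove $|(v,g)| \le C \|v\|_{1,\mathcal{T}} \|g\|_{0,\Omega}$ uniformly over $g\in L^2(\Omega)$. For each such $g$ I would solve the auxiliary dual problem $-\Delta \phi = g$ in $\Omega$, $\phi=0$ on $\partial\Omega$, and invoke elliptic regularity so that $\|\phi\|_{2,\Omega} \le C\|g\|_{0,\Omega}$. Applying elementwise Green's formula then yields
\[
(v,g) = \sum_{K\in\mathcal{T}} \int_K \nabla v \cdot \nabla \phi - \sum_{K\in\mathcal{T}} \int_{\partial K} v\,\partial_n \phi .
\]
The first sum is controlled immediately by $\|v\|_{1,\mathcal{T}} \|\phi\|_{1,\Omega}$ via Cauchy--Schwarz, and only the second requires a nonconforming treatment.

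Regrouping boundary integrals edge by edge, each interior edge $e$ contributes $\int_e [v]\,\partial_n\phi$ (with $[v]$ the cross-edge jump), and each boundary edge contributes $\int_e v\,\partial_n\phi$. The crucial observation is that the C-R constraints force $[v]$ (respectively $v|_e$) to be a linear polynomial on $e$ whose midpoint value vanishes; hence the midpoint quadrature rule, exact on linear polynomials, gives $\int_e [v]=0$ and $\int_e v=0$, which allows me to subtract any constant $c_e$ from $\partial_n \phi$ without changing the edge integral. Choosing $c_e$ to be an average of $\partial_n\phi$ over the patch $\omega_e$ of $e$, I would bound the two factors on each edge by scaled trace inequalities: $\|[v]\|_{0,e} \le C h_K^{1/2}(|v|_{1,K_1}+|v|_{1,K_2})$, exploiting that $[v]$ is linear with zero midpoint value, and $\|\partial_n\phi - c_e\|_{0,e} \le C h_K^{1/2}|\phi|_{2,\omega_e}$ by the trace theorem combined with Poincaré on $\omega_e$. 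Summing over all edges and using shape regularity together with the finite-overlap property of the patches produces a bound of the form $C h\,\|v\|_{1,\mathcal{T}}\,\|\phi\|_{2,\Omega}$ for the edge sum; combined with the first term and the regularity estimate this yields $|(v,g)| \le C \|v\|_{1,\mathcal{T}}\|g\|_{0,\Omega}$, and taking the supremum over $g$ closes the argument.

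The main obstacle is the appeal to $H^2$-regularity of the dual solution, which is automatic only when $\Omega$ is convex. On a general non-convex polygon only $H^{1+s}$-regularity for some $s\in(1/2,1)$ is available; in that case the Bramble--Hilbert step produces $h^s$ in place of $h$, still sufficient to conclude with a uniform constant. An entirely regularity-free alternative is to construct an Oswald-type conforming enrichment $v^c\in H^1_0(\Omega)$ of $v$ satisfying $\|v-v^c\|_{0,\Omega}\le C h\,\|v\|_{1,\mathcal{T}}$ and $\|v^c\|_{1,\Omega}\le C\|v\|_{1,\mathcal{T}}$ by averaging C-R nodal values across interior vertices and setting them to zero on the boundary, and then invoking the classical continuous Poincaré inequality on $v^c$; I would fall back on this variant if one prefers to dispense entirely with regularity assumptions on $\Omega$.
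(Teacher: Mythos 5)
Your proof is correct, but note that the paper does not actually prove this lemma at all: it simply imports it as Lemma~3.7 of \cite{CP1} (Chatzipantelidis), so there is no in-paper argument to match against. What you supply is a valid self-contained derivation. The duality route is sound in its details: the elementwise Green identity is right, the key structural fact --- that the cross-edge jump $[v]$ (respectively the boundary trace $v|_e$) is linear on each edge with vanishing midpoint value, hence has zero mean, so that an edgewise constant can be subtracted from $\partial_n\phi$ --- is exactly the property of the C-R space that makes the argument work, and the two scaled trace estimates $\|[v]\|_{0,e}\le Ch_K^{1/2}(|v|_{1,K_1}+|v|_{1,K_2})$ and $\|\partial_n\phi-c_e\|_{0,e}\le Ch_K^{1/2}|\phi|_{2,\omega_e}$ follow from shape regularity (which is the standing assumption on $\mathscr{T}$ here and is genuinely needed for a mesh-uniform constant). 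You also correctly flag the one real caveat, namely that full $H^2$ dual regularity needs convexity, and your $H^{1+s}$ patch is adequate since any positive power of $h$ suffices for this inequality. That said, the Oswald-enrichment fallback you sketch at the end is arguably the cleaner choice: it avoids the dual problem and all regularity considerations, uses only the zero-mean-jump property of C-R functions to control $\|v-v^c\|_{0,\Omega}$ and $|v^c|_{1,\Omega}$ by the broken $H^1$ seminorm, and then reduces everything to the classical Poincar\'e inequality for $v^c\in \mathbb{H}_0^1(\Omega)$; this is essentially Brenner's piecewise-$H^1$ Poincar\'e--Friedrichs argument and would be the version I would write out in full. Either route establishes the lemma with a constant depending only on $\Omega$ and the shape-regularity parameter $\varrho$, which is what the paper needs.
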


We choose the triangle $\hat{K}$ with vertices $\hat{P}_1:=(0,0)$, $\hat{P}_2:=(1,0)$ and $\hat{P}_3:=(0,1)$ as the reference triangle.
For any triangle $K$, there is an invertible affine mapping $\mathcal {F}_K$ from $\hat{K}$ to $K$ (cf. \cite{Ciarlet}).
\begin{lemma}\label{L2testspaceL2}
There exists a positive constant C such that for all $\mathcal{T}\in\mathscr{T}$ and all $v\in \mathbb{U}_{\mathcal{T}}$
$$
\|\Pi_{\mathcal{T}^{*}}v\|_{0,\Omega} \leq C \|v\|_{0,\Omega}.
$$
\end{lemma}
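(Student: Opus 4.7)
The plan is to reduce the global inequality to a local one on each triangle $K\in\mathcal{T}$ and then exploit the equivalence of norms on the finite-dimensional space of linear polynomials via a reference-element scaling argument.

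First I would split both sides of the desired inequality over the primary triangulation:
\[
\|\Pi_{\mathcal{T}^*}v\|_{0,\Omega}^2 = \sum_{K\in\mathcal{T}}\|\Pi_{\mathcal{T}^*}v\|_{0,K}^2,\qquad \|v\|_{0,\Omega}^2 = \sum_{K\in\mathcal{T}}\|v\|_{0,K}^2,
\]
so it suffices to establish $\|\Pi_{\mathcal{T}^*}v\|_{0,K}\le C\|v\|_{0,K}$ on each $K$ with a constant independent of $K$. Fix $K=\triangle P_iP_jP_k\in\mathcal{T}$ with barycenter $Q$ and edge-midpoints $m_{ij},m_{jk},m_{ik}$. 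By the construction of $\mathcal{T}^*$, each control volume $K^*_{m_e}$ meets $K$ in the triangle whose vertices are $Q$ and the two endpoints of $e$. A direct computation (or the standard fact that the three medians of a triangle divide it into six subtriangles of equal area) shows that $|K^*_{m_e}\cap K|=|K|/3$ for every $e\in E(K)$. Since $\Pi_{\mathcal{T}^*}v$ is the constant $v(m_e)$ on $K^*_{m_e}$, this gives the exact identity
\[
\|\Pi_{\mathcal{T}^*}v\|_{0,K}^2 = \frac{|K|}{3}\sum_{e\in E(K)} v(m_e)^2.
\]

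Next I would bound the right-hand side below by $\|v\|_{0,K}^2$. Pulling back to the reference triangle $\hat K$ through the affine map $\mathcal{F}_K$, write $\hat v:=v\circ\mathcal{F}_K$. Because $v|_K$ is linear, $\hat v$ is a linear polynomial on $\hat K$. On the three-dimensional space of linear polynomials, the functional $\hat v\mapsto \bigl(\sum_{\hat e}\hat v(\hat m_{\hat e})^2\bigr)^{1/2}$ is a norm (values at the three edge midpoints determine a linear function), and is therefore equivalent to the $L^2(\hat K)$-norm. Affine maps preserve midpoints, so $\hat v(\hat m_{\hat e})=v(m_e)$, and the change-of-variables factor is $|\det B_K|=|K|/|\hat K|$. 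Combining these gives
\[
\frac{|K|}{3}\sum_{e\in E(K)}v(m_e)^2 \le C\,|K|\,\|\hat v\|_{0,\hat K}^2 = C\,\|v\|_{0,K}^2
\]
with $C$ independent of $K$.

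Putting the two estimates together yields $\|\Pi_{\mathcal{T}^*}v\|_{0,K}^2\le C\|v\|_{0,K}^2$ on each triangle, and summation over $K\in\mathcal{T}$ completes the proof. The only nonroutine step is the area computation $|K^*_{m_e}\cap K|=|K|/3$; everything else is a standard affine-equivalence argument on the reference element. I do not expect a genuine obstacle here, since the regularity of $\mathscr{T}$ is not even needed for this particular estimate (the scaling is exact, not just up to shape-regularity constants).
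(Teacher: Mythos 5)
Your proposal is correct and follows essentially the same route as the paper: reduce to a single triangle, express $\|\Pi_{\mathcal{T}^*}v\|_{0,K}^2$ in terms of $|K|\sum_{e}v(m_e)^2$ (the paper uses the crude bound with factor $|K|$ where you compute the exact factor $|K|/3$), and then invoke equivalence of norms on the reference element, which is precisely the paper's positive-definiteness of the mass matrix $[\int_{\hat K}\hat\varphi_e\hat\varphi_l]$. No substantive difference.
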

\begin{proof}
It suffices to prove that there exists a positive constant $C$ such that for each $\mathcal{T}\in\mathscr{T}$ and each $K\in \mathcal {T}$
\begin{equation}\label{L2testspace_mid1}
\|\Pi_{\mathcal{T}^{*}}v\|_{0,K}^2 \leq C \|v\|_{0,K}^2.
\end{equation}
From the definition of $\Pi_{\mathcal{T}^{*}}$, we get that
\begin{equation}\label{L2testspace_mid2}
\|\Pi_{\mathcal{T}^{*}}v\|_{0,K}^2 \leq |K| \sum_{e\in E(K)} v^2(m_{e}).
\end{equation}
By making use of the variable transformation from $K$ to the reference triangle $\hat{K}$, we derive that
$$
\|v\|_{0,K}^2 = 2|K|\int_{\hat{K}} |\hat{v}|^2
$$
Note that $\hat{v}= \sum_{e\in E(K)} v(m_{e}) \hat{\varphi}_e$, where $\hat{\varphi}_e$ are the basis of the trial space on $\hat{K}$. By simple calculation, we learn that the matrix $\mathbf{M}:=[\int_{\hat{K}} \hat{\varphi}_e \hat{\varphi}_l]$ is positive definite. Thus, there exists a positive constant $C$ independent of meshes such that
\begin{equation}\label{L2testspace_mid3}
\|v\|_{0,K}^2\geq C |K|   \sum_{e\in E(K)} v^2(m_{e})
\end{equation}
Combining (\ref{L2testspace_mid2}) and (\ref{L2testspace_mid3}) yields (\ref{L2testspace_mid1}).
\end{proof}

From Lemma \ref{dis_Poin_inequ} and   Lemma \ref{L2testspaceL2}, we immediately get inequality (\ref{boundedness_dis_norm2}) for the C-R FVM as presented in the following proposition.

\begin{proposition}\label{L2testspace}
There exists a positive constant $C$ such that for all $\mathcal{T}\in\mathscr{T}$ and all $v\in \mathbb{U}_{\mathcal{T}}$
$$
\|\Pi_{\mathcal{T}^{*}}v\|_0 \leq C \|v\|_{1,\mathcal {T}}.
$$
\end{proposition}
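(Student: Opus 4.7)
The plan is to chain the two preceding lemmas directly, since the proposition has been deliberately set up to be a one-line corollary. First I would apply Lemma \ref{L2testspaceL2} to the function $v\in \mathbb{U}_\mathcal{T}$, obtaining
\[
\|\Pi_{\mathcal{T}^{*}}v\|_{0,\Omega} \le C\,\|v\|_{0,\Omega}.
\]
Then I would invoke the discrete Poincaré inequality on the nonconforming C-R space supplied by Lemma \ref{dis_Poin_inequ}, namely $\|v\|_{0,\Omega}\le C\|v\|_{1,\mathcal{T}}$, and combine the two bounds to obtain the desired estimate with a single generic constant $C$ (absorbing the two constants into one).

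There is essentially no technical obstacle at this stage: the boundary-condition hypothesis built into $\mathbb{U}_\mathcal{T}$ (zeros at midpoints on $\partial\Omega$) is exactly what powers the discrete Poincaré inequality, and the element-by-element bound in Lemma \ref{L2testspaceL2} was established by a scaling argument using the positive definiteness of the local mass matrix of the C-R basis on the reference triangle. So the proof consists only of quoting the two lemmas in sequence.

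The one minor thing to be careful about is the notation: the conclusion is stated with $\|\cdot\|_0$ rather than $\|\cdot\|_{0,\Omega}$, which I would treat as the same global $L^2(\Omega)$ norm so that no ambiguity arises. After this the result is complete and inequality (\ref{boundedness_dis_norm2}) is verified for the C-R FVM, which together with Lemma \ref{dnorm_equi} (providing (\ref{boundedness_dis_norm1})) fulfills the boundedness hypotheses of the abstract convergence theorem, Theorem \ref{thm: convergence_theorem}.
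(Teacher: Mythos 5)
Your proposal is correct and is exactly the paper's argument: the authors also obtain the proposition by chaining Lemma \ref{L2testspaceL2} with the discrete Poincar\'e inequality of Lemma \ref{dis_Poin_inequ}. Nothing further is needed.
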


We study the uniform ellipticity condition (\ref{uniform_elliptic}) for the C-R FVM.
We will establish that when $h$ is sufficiently small, (\ref{uniform_elliptic}) holds.
For $w\in \mathbb{H}^2_{\mathcal{T}}(\Omega)$ and $v\in \mathbb{V}_{\mathcal {T}^*}$, let
$$
 a_{\mathcal {T},1}(w,v):= -\sum\limits_{K\in\mathcal {T}}\sum\limits_{K^*\in\mathcal{T}^*} \int_{\partial
K^* \cap \mathrm{int} K}
 v(\mathbf{a}\nabla w)\cdot \mathbf{n} ds \quad \text{and} \quad  a_{\mathcal {T},2}(w,v):= \sum\limits_{K\in\mathcal {T}} \int_{K} bwv
$$
Then
$$
 a_{\mathcal {T}}(w,v) = a_{\mathcal {T},1}(w,v) + a_{\mathcal {T},2}(w,v).
$$

The following lemma is derived from the  proof of Lemma 4.2 of \cite{CP1}.
\begin{lemma}\label{LocalEll1}
There exists a positive
constant $C$
such that for all $\mathcal{T}\in \mathscr{T}$ and its associated $\mathcal {T}^*$, all $w\in \mathbb{U}_\mathcal{T}$,
$$
a_{\mathcal {T},1}(w, \Pi_{\mathcal{T}^{*}}w)\ge C
\|w\|^2_{1,\mathcal {T}}.
$$
\end{lemma}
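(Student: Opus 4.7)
The plan is to establish the bound by a local computation on each primary triangle $K\in\mathcal{T}$ followed by summation. Since the inner double sum in $a_{\mathcal{T},1}$ decouples into contributions from the three dual segments in $L^*_K$, I can write
$$a_{\mathcal{T},1}(w,\Pi_{\mathcal{T}^*}w)=\sum_{K\in\mathcal{T}}a_{K,1}(w,\Pi_{\mathcal{T}^*}w),$$
and it suffices to prove the local estimate $a_{K,1}(w,\Pi_{\mathcal{T}^*}w)\ge (r-Ch_K)\,|w|_{1,K}^2$, where $r$ is the ellipticity constant of $\mathbf{a}$. Summing yields $(r-Ch)\|w\|_{1,\mathcal{T}}^2$, and the preceding remark that $h$ may be chosen sufficiently small then produces the claimed lower bound with, say, $C=r/2$.

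Fix $K$ with vertices $P_1,P_2,P_3$, edge midpoints $m_1,m_2,m_3$ (with $m_k$ opposite $P_k$), and barycenter $Q$. The key observation is that $w|_K$ is affine, so $\mathbf{g}_K:=\nabla w|_K$ is constant, and each dual segment $QP_k\subset L^*_K$ is shared by exactly the two control volumes $K^*_{m_i},K^*_{m_j}$ with $\{i,j,k\}=\{1,2,3\}$, whose outward normals differ by a sign. Replacing $\mathbf{a}$ by its mean $\bar{\mathbf{a}}_K:=|K|^{-1}\int_K\mathbf{a}$, the boundary integral on $QP_k$ becomes $(\bar{\mathbf{a}}_K\mathbf{g}_K)\cdot\mathbf{n}_k\,|QP_k|$ times the jump $w(m_i)-w(m_j)=\tfrac12\mathbf{g}_K\cdot(P_j-P_i)$, and the local bilinear form collapses to a quadratic expression in $\mathbf{g}_K$ depending only on the geometry of $K$ and on $\bar{\mathbf{a}}_K$. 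A direct calculation on the reference triangle, transferred to $K$ by affine invariance, yields the exact identity
$$a_{K,1}(w,\Pi_{\mathcal{T}^*}w)\big|_{\mathbf{a}\equiv\bar{\mathbf{a}}_K}=\int_K\mathbf{g}_K^{T}\bar{\mathbf{a}}_K\mathbf{g}_K\,dx\;\ge\;r\,|w|_{1,K}^2,$$
where the inequality uses the ellipticity condition on $\mathbf{a}$, which transfers to the cell average $\bar{\mathbf{a}}_K$.

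To restore the variable coefficient, I would estimate the perturbation
$$a_{K,1}(w,\Pi_{\mathcal{T}^*}w)-a_{K,1}(w,\Pi_{\mathcal{T}^*}w)\big|_{\mathbf{a}\equiv\bar{\mathbf{a}}_K}=-\sum_{\ell^*\in L^*_K}\int_{\ell^*}[\Pi_{\mathcal{T}^*}w]\bigl((\mathbf{a}-\bar{\mathbf{a}}_K)\mathbf{g}_K\bigr)\cdot\mathbf{n}\,ds$$
using $\|\mathbf{a}-\bar{\mathbf{a}}_K\|_{L^\infty(K)}\le Ch_K$ (from $a_{ij}\in W^{1,\infty}$), the pointwise bound $|[\Pi_{\mathcal{T}^*}w]|=|w(m_i)-w(m_j)|\le h_K|\mathbf{g}_K|$, and the length bound $|\ell^*|\le Ch_K$. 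Together with the regularity of $\mathscr{T}$, which gives $h_K^2|\mathbf{g}_K|^2\le C|w|_{1,K}^2$, these ingredients yield a perturbation of size $Ch_K|w|_{1,K}^2$, completing the local bound.

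The principal technical hurdle is the exact identity in the second paragraph: one must carefully track the orientation of the outward normals on each shared segment $QP_k$ and verify algebraically that the three midpoint-difference times gradient-flux contributions collapse to exactly $|K|\,\mathbf{g}_K^{T}\bar{\mathbf{a}}_K\mathbf{g}_K$. Although the check is a low-dimensional linear-algebra computation on the reference triangle, a sign error or mislabeling of the control volumes adjacent to a given segment can easily destroy the identity, since the perturbation analysis cannot recover coercivity from a leading-order term of the wrong sign. Once this exact identity is in place, the remainder is a routine $W^{1,\infty}$-perturbation argument.
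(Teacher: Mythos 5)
Your argument is sound and, in substance, it is the argument the paper itself does not spell out: the paper simply cites the proof of Lemma 4.2 of \cite{CP1}, and that proof rests on exactly the two ingredients you identify. Your ``exact identity'' is correct; it is the classical fact that for the Crouzeix--Raviart element with a constant coefficient matrix the finite volume bilinear form coincides with the finite element one. The cleanest verification is not via the reference triangle but via the divergence theorem on each piece $V_i:=K^*_{m_i}\cap K$: since $\bar{\mathbf a}_K\mathbf g_K$ is a constant vector, $\int_{\partial V_i}(\bar{\mathbf a}_K\mathbf g_K)\cdot\mathbf n=0$, so the flux through $\partial V_i\cap\mathrm{int}\,K$ equals minus the flux through the outer edge $e_i\subset\partial K$, and then the exactness of the midpoint rule for the affine function $w$ on $e_i$ gives $a_{K,1}(w,\Pi_{\mathcal T^*}w)=\sum_i\int_{e_i}w\,(\bar{\mathbf a}_K\mathbf g_K)\cdot\mathbf n_K=\int_K\mathbf g_K^T\bar{\mathbf a}_K\mathbf g_K$, with no orientation bookkeeping needed. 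Your $W^{1,\infty}$ perturbation estimate and the use of regularity to absorb $h_K^2|\mathbf g_K|^2$ into $|w|_{1,K}^2$ are both correct.

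One point deserves flagging. Your conclusion is $a_{\mathcal T,1}(w,\Pi_{\mathcal T^*}w)\ge(r-Ch)\|w\|_{1,\mathcal T}^2$, which yields the lemma only for $h$ below a threshold; there is in fact no ``preceding remark'' in the paper licensing this at the point where the lemma is stated (the smallness of $h$ enters only later, in Proposition 3.8 and Theorem 3.9, and there it is invoked for the reaction term, not the principal part). For genuinely variable $W^{1,\infty}$ coefficients I do not see how to remove this restriction, so the lemma as literally stated is slightly stronger than what your proof (or the cited source, which carries the same $h\le h_0$ hypothesis) delivers. This is a looseness in the paper's statement rather than a defect of your argument, but you should state the smallness hypothesis explicitly rather than attribute it to a remark that is not there.
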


In the next lemma, we estimate $a_{\mathcal {T},2}(\cdot, \Pi_{\mathcal{T}^{*}}\cdot)$.
\begin{lemma}\label{LocalEll2}
If the coefficient $b$ in (\ref{eq:poisson_equation}) is a piecewise constant function with respect $\mathcal{T}$, then for all $\mathcal{T}\in \mathscr{T}$ and its associated $\mathcal {T}^*$ and all $w\in \mathbb{U}_\mathcal{T}$,
$$
a_{\mathcal {T},2}(w, \Pi_{\mathcal{T}^{*}}w)\ge 0.
$$
Moreover, if and only if $b=0$ or $w=0$, the above inequality becomes an equality.
\end{lemma}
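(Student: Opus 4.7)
The plan is to exploit the piecewise-constancy of $b$ to reduce to an element-level estimate, then evaluate $\int_K w\,\Pi_{\mathcal{T}^{*}}w$ explicitly as a sum of squares in the midpoint values of $w$. Setting $b_K := b|_K \ge 0$, the bilinear form decomposes as
\[
a_{\mathcal{T},2}(w,\Pi_{\mathcal{T}^{*}}w) = \sum_{K\in\mathcal{T}} b_K \int_K w\,\Pi_{\mathcal{T}^{*}}w,
\]
so it suffices to prove that each element-level integral is nonnegative, and vanishes only when $w|_K \equiv 0$.

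Fix $K\in\mathcal{T}$ with vertices $P_1,P_2,P_3$, opposite-edge midpoints $M_1,M_2,M_3$ and barycenter $Q$. From the dual-partition construction described in this section, the intersection $T_i := K^{*}_{M_i}\cap K$ is the sub-triangle with vertices $\{P_j,P_k,Q\}$ where $\{i,j,k\}=\{1,2,3\}$; each $T_i$ has area $|K|/3$, and $\Pi_{\mathcal{T}^{*}}w$ equals the constant $w(M_i)$ on it. Since $w|_K$ is affine, $\int_{T_i} w = |T_i|\,w(g_i)$, where the centroid of $T_i$ satisfies $g_i = \tfrac{1}{3}(P_j+P_k+Q) = \tfrac{1}{3}(2M_i+Q)$, so $w(g_i)=\tfrac{1}{3}\bigl(2w(M_i)+w(Q)\bigr)$. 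Using the identity $Q=\tfrac{1}{3}(M_1+M_2+M_3)$, which yields $w(Q)=\tfrac{1}{3}\sum_i w(M_i)$, summing over $i$ I expect to obtain
\[
\int_K w\,\Pi_{\mathcal{T}^{*}}w = \frac{|K|}{9}\sum_{i=1}^{3} w(M_i)\bigl(2w(M_i)+w(Q)\bigr) = \frac{2|K|}{9}\sum_{i=1}^{3} w(M_i)^2 + \frac{|K|}{3}\,w(Q)^2,
\]
which is manifestly nonnegative. This quantity vanishes iff $w(M_1)=w(M_2)=w(M_3)=0$, and since $M_1,M_2,M_3$ are the non-collinear midpoints of $K$ and $w|_K$ is affine, this is equivalent to $w|_K\equiv 0$. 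Combined with $b_K\ge 0$, both the inequality and the equality characterization follow element by element, so $a_{\mathcal{T},2}(w,\Pi_{\mathcal{T}^{*}}w)=0$ iff for every $K$ either $b_K=0$ or $w|_K\equiv 0$, which is the stated ``$b=0$ or $w=0$'' conclusion.

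The only non-routine step will be the geometric bookkeeping: confirming that $T_i=K^{*}_{M_i}\cap K$ is the sub-triangle $P_jP_kQ$ of area $|K|/3$, and writing its centroid in the compact form $g_i=\tfrac{1}{3}(2M_i+Q)$. Once those facts are in place, the rest is an elementary algebraic identity for an affine function that exhibits the integral directly as a sum of squares, and no further difficulty is anticipated.
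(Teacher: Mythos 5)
Your proof is correct and follows the same strategy as the paper: reduce to the elementwise integral $\int_K w\,\Pi_{\mathcal{T}^{*}}w$ and show it is a positive definite quadratic form in the midpoint values $w(M_i)$, then use $b\ge 0$ piecewise constant to conclude. The paper does this by mapping to the reference triangle and asserting positive definiteness of the resulting matrix ``by simple calculation,'' whereas you carry the computation out explicitly on the physical element via the centroid identities $g_i=\tfrac{1}{3}(2M_i+Q)$ and $w(Q)=\tfrac{1}{3}\sum_i w(M_i)$, arriving at the clean sum of squares $\tfrac{2|K|}{9}\sum_i w(M_i)^2+\tfrac{|K|}{3}\,w(Q)^2$; this supplies exactly the detail the paper omits and is a more self-contained version of the same argument. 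One caveat you share with the paper: both proofs actually characterize equality by ``for every $K$, $b|_K=0$ or $w|_K\equiv 0$,'' which is slightly weaker than the stated ``$b=0$ or $w=0$'' (e.g.\ $b$ could vanish on some elements and $w$ on the others), though you at least state the elementwise condition explicitly before identifying it with the lemma's conclusion.
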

\begin{proof}
For all $w\in \mathbb{U}_\mathcal{T}$, let $w^*:=\Pi_{\mathcal{T}^{*}}w$.
By changing variables, we derive that
\begin{equation}\label{LocalEll2_mid1}
\int_{K} ww^* =2|K|\int_{\hat{K}} \hat{w} \hat{w}^*.
\end{equation}
By simple calculation, we derive that $\int_{\hat{K}} \hat{w} \hat{w}^*$ is a positive definite quadratic form of $w(m_e),e\in E(K)$.
Thus, $\int_{K} ww^* \geq 0$ and if and only if $w|_K=0$, the inequality sign becomes equal sign. Since $b$ is piecewise constant with $\mathcal {T}$ and $b\geq 0$, we get that
$$
a_{\mathcal {T},2}(w,w^*)=\sum\limits_{K\in\mathcal {T}} \int_{K} bww^* \geq \sum\limits_{K\in\mathcal {T}} b \int_{K} ww^*.
$$
This yields the desired results of this lemma.
\end{proof}


From Lemma \ref{LocalEll1} and Lemma \ref{LocalEll2}, we can get the following proposition.
\begin{proposition}\label{suffi_conditions_HQ}
If $h$ is sufficiently small,
then $\mathscr{A}_\mathscr{T}$ is uniformly elliptic.
\end{proposition}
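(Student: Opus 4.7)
The plan is to combine Lemma \ref{LocalEll1} with a perturbation of Lemma \ref{LocalEll2} to handle the case of a general smooth nonnegative coefficient $b$. Fix $\mathcal{T}\in\mathscr{T}$ and $w\in \mathbb{U}_{\mathcal{T}}$ and set $w^{*}:=\Pi_{\mathcal{T}^{*}}w$. By the splitting
$$
a_{\mathcal{T}}(w,w^{*}) = a_{\mathcal{T},1}(w,w^{*}) + a_{\mathcal{T},2}(w,w^{*}),
$$
Lemma \ref{LocalEll1} already provides a lower bound $a_{\mathcal{T},1}(w,w^{*})\ge C_1\|w\|_{1,\mathcal{T}}^{2}$. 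Thus it suffices to control $a_{\mathcal{T},2}(w,w^{*})=\sum_{K\in\mathcal{T}}\int_{K} b\, w w^{*}$ from below by a term of order $-h\|w\|_{1,\mathcal{T}}^{2}$.

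First I would introduce, on each $K\in\mathcal{T}$, the cell-average $\bar{b}_{K}:=|K|^{-1}\int_{K}b$ and form the piecewise constant function $\bar{b}$ with $\bar{b}|_{K}:=\bar{b}_{K}$. Since $b\in W^{1,\infty}(\Omega)$ and $\mathcal{T}$ is regular, one has $\|b-\bar{b}\|_{L^{\infty}(K)}\le C h_{K}\|b\|_{W^{1,\infty}(\Omega)}\le Ch$ uniformly over $K$ and $\mathcal{T}$. Splitting the coefficient,
$$
a_{\mathcal{T},2}(w,w^{*}) = \sum_{K\in\mathcal{T}}\int_{K}\bar{b}_{K}\,w w^{*} + \sum_{K\in\mathcal{T}}\int_{K}(b-\bar{b}_{K})\,w w^{*}.
$$
The first sum is exactly $a_{\mathcal{T},2}$ associated with the piecewise constant nonnegative coefficient $\bar{b}$, and so it is nonnegative by Lemma \ref{LocalEll2}.

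For the perturbation term, I would estimate via the Cauchy--Schwarz inequality,
$$
\Bigl|\sum_{K\in\mathcal{T}}\int_{K}(b-\bar{b}_{K})\,w w^{*}\Bigr|
\le \max_{K}\|b-\bar{b}_{K}\|_{L^{\infty}(K)}\;\|w\|_{0,\Omega}\;\|w^{*}\|_{0,\Omega}
\le Ch\,\|w\|_{0,\Omega}\,\|w^{*}\|_{0,\Omega}.
$$
Applying the discrete Poincar\'e inequality Lemma \ref{dis_Poin_inequ} to bound $\|w\|_{0,\Omega}$ and Proposition \ref{L2testspace} to bound $\|w^{*}\|_{0,\Omega}$, both by $C\|w\|_{1,\mathcal{T}}$, this perturbation is at most $C_{2}h\|w\|_{1,\mathcal{T}}^{2}$. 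Combining the three bounds gives
$$
a_{\mathcal{T}}(w,w^{*})\ge (C_{1}-C_{2}h)\|w\|_{1,\mathcal{T}}^{2},
$$
so for $h$ small enough that $C_{2}h\le C_{1}/2$, the uniform ellipticity holds with $\sigma:=C_{1}/2$.

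The only step requiring any care is the perturbation estimate: one must be sure that the constants in Lemma \ref{dis_Poin_inequ} and Proposition \ref{L2testspace} are mesh-independent (which they are, by the statements of those results) so that the absorbed term really is of order $h$. Everything else is a direct application of the already-established lemmas, which is why the conclusion only needs the smallness of $h$ rather than any additional hypothesis on $b$.
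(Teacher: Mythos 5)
Your proposal is correct and follows essentially the same route as the paper: the same splitting $a_{\mathcal{T}}=a_{\mathcal{T},1}+a_{\mathcal{T},2}$, Lemma \ref{LocalEll1} for the principal part, and an approximation of $b$ by a nonnegative piecewise constant so that Lemma \ref{LocalEll2} applies, with the difference treated as a perturbation. In fact your version is tighter than the paper's, which justifies the perturbation only by a qualitative limit statement, whereas you bound it explicitly by $Ch\|w\|_{1,\mathcal{T}}^{2}$ via Lemma \ref{dis_Poin_inequ} and Proposition \ref{L2testspace} and then absorb it into the coercivity constant.
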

\begin{proof}
We need to prove that (\ref{uniform_elliptic}) holds with a positive constant independent of meshes.
If $b=0$, from Lemma \ref{LocalEll1}, (\ref{uniform_elliptic}) holds. We next assume that $b\neq 0$.
For each $K\in \mathcal {T}$, let $Q_K$ denote its barycenter and let $\bar{b}_K:=b(Q_K)$.
For for all $\mathcal {T}\in \mathscr{T}$ and all $w\in \mathbb{U}_\mathcal {T}$ and $w\neq 0$, let $w^*:=\Pi_{\mathcal{T}^{*}}w$.
We define
$$
\bar{a}_{\mathcal {T},2}(w,w^*):= \sum\limits_{K\in\mathcal {T}} \int_{K} \bar{b}_K ww^*.
$$
By the smoothness of $b$, we have that
$$
\lim_{h\rightarrow 0} \left({a}_{\mathcal {T},2}(w,w^*)- \bar{a}_{\mathcal {T},2}(w,w^*) \right)=0.
$$
Thus, by Lemma \ref{LocalEll2}, we learn that when $h$ is sufficiently small, ${a}_{\mathcal {T},2}(w,w^*) >0$. This combined with Lemma \ref{LocalEll1} yields (\ref{uniform_elliptic}).
\end{proof}

We analyze the nonconforming error as defined in (\ref{nonconforming error}) for the C-R FVM in the next proposition.
\begin{proposition}\label{pro:CRnonconforming-error}
Let $u\in \mathbb{H}_0^1(\Omega)\cap \mathbb{H}^2(\Omega)$ be the solution of
(\ref{eq:poisson_equation}) and $u_\mathcal{T}\in
\mathbb{U}_\mathcal{T}$ be the solution of the C-R FVM equation. Then, for each $v\in \mathbb{U}_\mathcal{T}$, the nonconforming error $E_\mathcal {T}(u,v)$ is equal to zero.
\end{proposition}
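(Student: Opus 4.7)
The plan is to show directly that $a_{\mathcal{T}}(u,\Pi_{\mathcal{T}^{*}}v)=(f,\Pi_{\mathcal{T}^{*}}v)$ by applying Green's formula on each dual cell, exploiting the fact that the test function $v^{*}:=\Pi_{\mathcal{T}^{*}}v$ is piecewise constant on the dual partition.

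First, fix $v\in\mathbb{U}_{\mathcal{T}}$ and write $v^{*}=\Pi_{\mathcal{T}^{*}}v=\sum_{M\in\mathcal{M}_{\mathcal{T}}}v(M)\chi_{K^{*}_{M}}$. For each dual cell $K^{*}=K^{*}_{M}$, integrate the strong equation $-\nabla\!\cdot\!(\mathbf{a}\nabla u)+bu=f$ against the constant $v^{*}|_{K^{*}}$. Since $u\in\mathbb{H}^{2}(\Omega)$, Green's formula yields
$$
\int_{K^{*}}fv^{*}|_{K^{*}}=\int_{K^{*}}\mathbf{a}\nabla u\cdot\nabla(v^{*}|_{K^{*}})+\int_{K^{*}}bu\,v^{*}|_{K^{*}}-\int_{\partial K^{*}}v^{*}|_{K^{*}}(\mathbf{a}\nabla u)\cdot\mathbf{n}_{K^{*}}.
$$
The middle-free first term on the right vanishes because $v^{*}|_{K^{*}}$ is constant, leaving
$$
\int_{K^{*}}fv^{*}=\int_{K^{*}}bu\,v^{*}-\int_{\partial K^{*}}v^{*}|_{K^{*}}(\mathbf{a}\nabla u)\cdot\mathbf{n}_{K^{*}}.
$$

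Next, I would sum this identity over $K^{*}\in\mathcal{T}^{*}$ and match the result with the discrete bilinear form (\ref{bilinear_form_CR}). The volume pieces match immediately: since $\{K^{*}\cap K\}$ partitions $\Omega$, we get $\sum_{K^{*}}\int_{K^{*}}bu\,v^{*}=\sum_{K\in\mathcal{T}}\sum_{K^{*}}\int_{K^{*}\cap K}bu\,v^{*}$. The boundary pieces require the geometric observation that, by construction of the C-R dual cells (segments $P_{i}Q$, $QP_{j}$, $P_{j}Q'$, $Q'P_{i}$ with $Q,Q'$ barycenters), every interior dual gridline lies strictly inside a single primary triangle, meeting $\partial K$ only at isolated vertices. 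Consequently $\sum_{K}\int_{\partial K^{*}\cap\mathrm{int}\,K}=\int_{\partial K^{*}\setminus\partial\Omega}$ for each $K^{*}$, with no measure-theoretic loss.

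The delicate point, which I expect to be the only real obstacle, is treating the dual cells attached to $\partial\Omega$: for a boundary midpoint $M\in\mathcal{M}_{\mathcal{T}}\cap\partial\Omega$ lying on an edge $P_{i}P_{j}\subset\partial\Omega$, the dual cell $K^{*}_{M}$ is a triangle $P_{i}QP_{j}$ and the segment $P_{i}P_{j}$ is on $\partial\Omega$, hence excluded from $\partial K^{*}\cap\mathrm{int}\,K$. This would leave a boundary remainder, but by definition of $\mathbb{U}_{\mathcal{T}}$ we have $v(M)=0$ for every such $M$, so $v^{*}|_{K^{*}_{M}}=0$ and the entire contribution of $\partial K^{*}_{M}$ (including its piece on $\partial\Omega$) vanishes. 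Thus assembling the summed identity gives
$$
(f,v^{*})=\sum_{K\in\mathcal{T}}\sum_{K^{*}\in\mathcal{T}^{*}}\left(\int_{K^{*}\cap K}bu\,v^{*}-\int_{\partial K^{*}\cap\mathrm{int}\,K}v^{*}(\mathbf{a}\nabla u)\cdot\mathbf{n}\right)=a_{\mathcal{T}}(u,v^{*}),
$$
which is precisely $E_{\mathcal{T}}(u,v)=a_{\mathcal{T}}(u,\Pi_{\mathcal{T}^{*}}v)-(f,\Pi_{\mathcal{T}^{*}}v)=0$, completing the proof. Note that no nonconforming jumps of the trial function arise because $u$ is the exact $\mathbb{H}^{2}$-regular solution; this is the structural reason the nonconforming error is zero for the C-R FVM.
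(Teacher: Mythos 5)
Your proof is correct and follows essentially the same route as the paper: the paper simply observes that $\mathbb{V}_{\mathcal{T}^*}\subseteq \mathbb{H}^1_{\mathcal{T}^{*}}(\Omega)$ and invokes the variational identity (\ref{variationaleq}) together with the scheme (\ref{eq:NFVM_sheme}) to conclude $a_{\mathcal{T}}(u-u_{\mathcal{T}},\Pi_{\mathcal{T}^*}v)=0$, whereas you inline the derivation of that identity by applying Green's formula on each dual cell (correctly handling the boundary cells via $v(M)=0$ for $M\in\mathcal{M}_{\mathcal{T}}\cap\partial\Omega$). The only minor quibble is your closing remark: the structural reason the nonconforming error vanishes is not the $\mathbb{H}^2$-regularity of $u$ but the inclusion of the test space in $\mathbb{H}^1_{\mathcal{T}^{*}}(\Omega)$ --- precisely what fails for the hybrid Wilson FVM, where the nonconforming error is nonzero.
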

\begin{proof}
Note that in the C-R FVM, $\mathbb{V}_{\mathcal{T}^*} \subseteq \mathbb{H}^1_{\mathcal{T}^{*}}(\Omega)$. From (\ref{variationaleq}), we get that for each $v\in \mathbb{U}_\mathcal{T}$
\begin{equation}\label{pro:CRnonconforming-errormid1}
a_\mathcal{T}\left(u, \Pi_{\mathcal {T}^*}v\right) = \left(f, \Pi_{\mathcal {T}^*}v\right).
\end{equation}
From (\ref{eq:NFVM_sheme}), we obtain that
\begin{equation}\label{pro:CRnonconforming-errormid2}
a_\mathcal{T}\left(u_\mathcal{T}, \Pi_{\mathcal {T}^*}v\right) = \left(f, \Pi_{\mathcal {T}^*}v\right).
\end{equation}
Combining (\ref{pro:CRnonconforming-errormid1}) and (\ref{pro:CRnonconforming-errormid2}) yields
$$
a_\mathcal{T}\left(u-u_\mathcal{T}, \Pi_{\mathcal {T}^*}v\right) =0,
$$
which means that
$
E_\mathcal {T}(u,v)=0.
$
\end{proof}

Now we are ready to present the convergence of the C-R FVM.
\begin{theorem}\label{thm:CRconvergence}
Let $u\in \mathbb{H}_0^1(\Omega)\cap \mathbb{H}^2(\Omega)$ be the solution of
(\ref{eq:poisson_equation}).
If $\mathscr{T}$ is regular and $h$ is sufficiently small, then for each $\mathcal{T}$ the C-R FVM equation
has a unique solution $u_\mathcal{T}\in
\mathbb{U}_\mathcal{T}$, and there exists a positive constant $C$
such that for all $\mathcal{T}\in\mathscr{T}$
\begin{equation}\label{CRerror-estimate-inequality}
\|u-u_\mathcal{T}\|_{1,\mathcal {T}}\le C h|u|_{2}.
\end{equation}
\end{theorem}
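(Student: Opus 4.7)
The plan is to apply Theorem \ref{thm: convergence_theorem} directly. Its hypotheses have already been verified above for the C-R FVM: regularity of $\mathscr{T}$ is assumed, the discrete norm inequality (\ref{boundedness_dis_norm1}) follows from Lemma \ref{dnorm_equi}, inequality (\ref{boundedness_dis_norm2}) is Proposition \ref{L2testspace}, and the uniform ellipticity (\ref{uniform_elliptic}) is Proposition \ref{suffi_conditions_HQ} once $h$ is sufficiently small. Theorem \ref{thm: convergence_theorem} therefore yields the existence and uniqueness of $u_\mathcal{T}$ together with the abstract bound (\ref{thm: convergence_theorem_ineq}).

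The next step is to eliminate the nonconforming error. By Proposition \ref{pro:CRnonconforming-error}, $E_\mathcal{T}(u,v)=0$ for every $v\in \mathbb{U}_\mathcal{T}$, so the supremum term in (\ref{thm: convergence_theorem_ineq}) drops out and it suffices to bound
$$\inf_{w\in \mathbb{U}_\mathcal{T}}\left(\|u-w\|_{0,\Omega}+\|u-w\|_{1,\mathcal{T}}+h|u-w|_{2,\mathcal{T}}\right).$$

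To control the infimum, I would take $w := I_\mathcal{T} u \in \mathbb{U}_\mathcal{T}$, the canonical Crouzeix-Raviart interpolant defined by matching $u$ at each interior edge midpoint (and vanishing at each boundary edge midpoint). Since $u\in \mathbb{H}^2(\Omega)$ and $I_\mathcal{T}u$ is linear on every $K\in\mathcal{T}$, a standard Bramble-Hilbert argument on the reference triangle $\hat K$ followed by the affine scaling $\mathcal{F}_K$ gives, elementwise,
$$\|u-I_\mathcal{T}u\|_{0,K}\le Ch_K^2|u|_{2,K},\qquad |u-I_\mathcal{T}u|_{1,K}\le Ch_K|u|_{2,K},\qquad |u-I_\mathcal{T}u|_{2,K}=|u|_{2,K}.$$
Squaring these, summing over $K\in\mathcal{T}$, taking square roots, and using $h_K\le h$, each of the three terms inside the infimum is bounded by a constant multiple of $h|u|_2$; substituting back produces (\ref{CRerror-estimate-inequality}).

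No genuine obstacle is anticipated. The structural content -- uniform boundedness, uniform ellipticity (for $h$ small), and the vanishing of the nonconforming error -- has already been isolated in Theorem \ref{thm: convergence_theorem} and Propositions \ref{suffi_conditions_HQ} and \ref{pro:CRnonconforming-error}. The only remaining ingredient is the classical C-R interpolation bound, which is standard and requires no new idea.
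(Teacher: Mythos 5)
Your proposal is correct and follows essentially the same route as the paper: the paper likewise invokes Theorem \ref{thm: convergence_theorem} together with Lemma \ref{dnorm_equi}, Propositions \ref{L2testspace}, \ref{suffi_conditions_HQ} and \ref{pro:CRnonconforming-error}, and then cites the standard interpolation approximation error of the C-R finite element space to bound the infimum. Your explicit elementwise interpolation estimates merely spell out the step the paper leaves as a citation.
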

\begin{proof}
Combining Theorem \ref{thm: convergence_theorem} with Lemma \ref{dnorm_equi}, Propositions \ref{L2testspace}, \ref{suffi_conditions_HQ} and \ref{pro:CRnonconforming-error}, we get that for each $\mathcal {T}\in\mathscr{T}$ the C-R FVM equation has a unique solution $u_\mathcal{T}\in \mathbb{U}_\mathcal{T}$, and there exists a
positive constant $C$ such that for all $\mathcal{T}\in\mathscr{T}$
\begin{equation}\label{CRconvergence1}
\|u-u_\mathcal{T}\|_{1,\mathcal {T}}\le  C\inf_{w\in
\mathbb{U}_\mathcal{T}}\left(\|u-w\|_0 + \|u-w\|_{1,\mathcal {T}}+h|u|_{2,\mathcal{T}}\right).
\end{equation}
The desired error estimate inequality (\ref{CRerror-estimate-inequality}) of this theorem is derived from (\ref{CRconvergence1}) and the interpolation approximation error of the FE space.
\end{proof}

\section{The Hybrid Wilson FVM}

The hybrid Wilson FVM employs the classical Wilson finite element space as its trial space and test space is panned by the characteristic functions of the control volumes in the dual partition combined with certain linearly independent
functions of the trial spaces.

For simplicity, we assume that $\Omega=[a,b]\times [c,d]$.
In the hybrid Wilson FVM, the partition $\mathcal {T}$ is a rectangle partition of $\Omega$:
$
a=x_0 <x_1<\ldots<x_{m_1}=b,\ c=y_0<y_1<\ldots<y_{m_2}.
$
For a positive integer $m$, we let $\mathbb{N}_{m}:=\{1,2,\ldots,m\}$.
We use
$
\Theta\{P_1, P_2, P_3, P_4\}
$
for the rectangle with the vertices $P_i$, $i\in \mathbb{N}_4$ being connected consecutively.
For a vertex $P$ of a rectangle element in $\mathcal {T}$, suppose that it is the common vertex of the rectangle elements $K_i\in \mathcal {T}, i\in \mathbb{N}_4$ and suppose that $Q_i,i\in \mathbb{N}_4$ are the centers of $K_i$. The the rectangle $\Theta\{Q_1, Q_2, Q_3, Q_4\}$ is the control volume surrounding the vertex $P$, denoted by  $K^*_P$ (cf. Figure \ref{fig:Wilson_dual_partition1}). For $P\in \partial\Omega$, we derive a control volume associated with it similarly. Then each vertex are associated with a control volume and all control volumes form the dual partition $\mathcal {T}^*$.

\begin{figure}[ht!]
\centering
\includegraphics[width=0.35\textwidth]{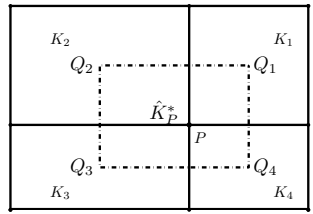}
\caption{A control volume of the hybrid Wilson FVM} \label{fig:Wilson_dual_partition1}
\end{figure}

We choose the square $\hat{K}$ with vertices $\hat{P}_1:=(1,1)$, $\hat{P}_2:=(-1,1)$, $\hat{P}_3:=(-1,-1)$ and $\hat{P}_4:=(1,-1)$ as the reference rectangle. For each $K\in \mathcal {T}$, there is an invertible affine mapping $\mathcal {F}_K$ from $\hat{K}$ to $K$ (cf. \cite{Sh}).
Similar to the FE method, we only need to describe the trial space and the test space on the reference rectangle for the FVMs.
The trial space $\mathbb{U}_{\hat{K}}$ on $\hat{K}$ is a space of polynomials of degree less than or equal to 2. The set of degrees of freedom $\hat{\Sigma}:=\{\hat{\eta}_i:i\in \mathbb{N}_6\}$, where
\begin{equation}\label{freedomfucntional_wil}
\hat{\eta}_i (w) = w(\hat{P}_i), \ i\in \mathbb{N}_4, \quad \hat{\eta}_{4+j}(w)=\int_{\hat{K}} \partial_{jj}w, \ j\in \mathbb{N}_2.
\end{equation}
There is a basis $\hat{\Phi}:=\{\hat{\phi}_i: i\in \mathbb{N}_6\}$ for $\mathbb{U}_{\hat{K}}$ such that
$$
\hat{\eta}_i (\hat{\phi}_j)=
\delta_{i,j}:= \left\{
\begin{array}{ll} 1, & i=j,\\0, &i\neq j,
\end{array}   \quad i,  j\in \mathbb{N}_{6}.
\right.
$$
By simple calculation, we get that
$$
\begin{array}{lllll}
\hat{\phi}_1:= (1/4) (1+x_1)(1+x_2), & \hat{\phi}_2:= (1/4) (1-x_1)(1+x_2), & \hat{\phi}_3:= (1/4) (1-x_1)(1-x_2),\\
\hat{\phi}_4:= (1/4) (1+x_1)(1-x_2),&  \hat{\phi}_5:= (1/8) (x_1^2-1), & \hat{\phi}_6:= (1/8) (x_2^2-1).
\end{array}
$$

\begin{figure}[ht!]
\centering
\includegraphics[width=0.3\textwidth]{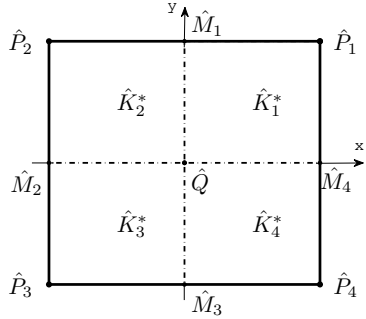}
\caption{The dual partition of the hybrid Wilson FVM on the reference rectangle} \label{fig:Wilson_dual_partition}
\end{figure}

Let $\hat{M}_1:=(0,1)$, $\hat{M}_2:=(-1,0)$, $\hat{M}_3:=(0,-1)$, $\hat{M}_4:=(1,0)$ and $\hat{Q}:=(0,0)$.
The dual partition $\hat{\cal T}^*:=\{\hat{K}^*_i:
i \in \mathbb{N}_4\}$ of $\hat{K}$ is
$$
\begin{array}{lllll}
\hat{K}^*_{1} := \Theta\left\{\hat{Q},\hat{M}_4, \hat{P}_1, \hat{M}_1\right\}, & &  \hat{K}^*_{2} := \Theta\left\{\hat{Q},\hat{M}_1, \hat{P}_2, \hat{M}_2\right\},\\
\hat{K}^*_{3} := \Theta\left\{\hat{Q},\hat{M}_2, \hat{P}_3, \hat{M}_3\right\},& &  \hat{K}^*_{4} := \Theta\left\{\hat{Q},\hat{M}_3, \hat{P}_4, \hat{M}_4\right\}.
\end{array}
$$
In Figure \ref{fig:Wilson_dual_partition}, we draw the reference rectangle $\hat{K}$ and the dual partition $\hat{\cal T}^*$ on it.
The test space on $\hat{K}$ is chosen as $\mathbb{V}_{\refts} := \textrm{span}\
\Psi_{\refts}$, where its basis $\Psi_{\refts}$ consists of
$$
\refpsi_i := \chi_{\hat{K}^*_{i}}, i \in \mathbb{N}_4, \quad  \refpsi_{4+i}  := \hat{\phi}_{4+i}, i\in \mathbb{N}_2.
$$


By making use of the affine mappings between the reference rectangle $\hat{K}$ and rectangles $K\in \mathcal {T}$, we derive a basis $\Phi_\mathcal {T}:=\{\phi_i: i\in \mathbb{N}_n\}$ for $\mathbb{U}_\mathcal {T}$ and a basis $\Psi_{\mathcal {T}^*}:=\{\psi_i: i\in \mathbb{N}_n\}$ for $\mathbb{V}_{\mathcal {T}^*}$.
Note that $\Psi_{\mathcal {T}^*}$ consists of the nonconforming elements of $\Phi_\mathcal {T}$, which are not continuous on the common edge of the adjacent rectangles. Thus, $\mathbb{V}_{\mathcal {T}^*} \nsubseteq \mathbb{H}^1_{\mathcal{T}^{*}}(\Omega)$. Using $\Phi_\mathcal {T}$
and $\Psi_{\mathcal {T}^*}$, we
define a natural invertible linear mapping
$\Pi_{\mathcal{T}^{*}}:\mathbb{U}_{\mathcal{T}} \to
\mathbb{V}_{\mathcal{T}^{*}}$ for any $w=\sum_{i\in \mathbb{N}_n}
w_i\phi_i\in \mathbb{U}_{\mathcal{T}}$ by
\begin{equation}\label{affine mapping}
    \Pi_{\mathcal{T}^{*}}w:=\sum_{i\in \mathbb{N}_n} w_i\psi_i.
\end{equation}

We turn to the convergence analysis of the hybrid Wilson FVM based on Theorem \ref{thm: convergence_theorem}.
The trial space $\mathbb{U}_{\hat{K}}$ on $\hat{K}$ may be written as the sum of two spaces
$\mathbb{U}_{\hat{K}}=\mathbb{U}_{1,\hat{K}}+\mathbb{U}_{2,\hat{K}}$, where $\mathbb{U}_{1,\hat{K}}:=\text{span}\{\hat{\phi}_i: i\in \mathbb{N}_4\}$ and $\mathbb{U}_{2,\hat{K}}:=\text{span}\{\hat{\phi}_5,\hat{\phi}_6\}$. By virtue of this decomposition, every function $w\in \mathbb{U}_{\mathcal{T}}$ consists of two parts
\begin{equation}\label{trial_spa_dec}
w= w_1 +w_2,
\end{equation}
where for each $K\in \mathcal {T}$, $w_1|_K\cdot \mathcal {F}_K \in \mathbb{U}_{1,\hat{K}}$ and $w_2|_K\cdot \mathcal {F}_K \in \mathbb{U}_{2,\hat{K}}$.
Obviously, $w_1$ is uniquely determined by the values of $w$ at the vertices of all $K\in \mathcal {T}$, so that $w_1$ is a continuous function on $\bar{\Omega}$, representing the conforming part of $w$. The function $w_2$ depending merely on the mean values of the second derivatives on each $K\in \mathcal {T}$, is discontinuous at the interelement boundaries and thus nonconforming.
According to (\ref{trial_spa_dec}), from the definition of $\Pi_{\mathcal{T}^{*}}$, we have that
\begin{equation}\label{test_spa_dec}
\Pi_{\mathcal{T}^{*}} w= \Pi_{\mathcal{T}^{*}}w_1 +w_2.
\end{equation}
The function $\Pi_{\mathcal{T}^{*}}w_1$ is a piecewise constant function with
respect to $\mathcal {T}^*$ and its values at vertices of $K\in \mathcal {T}$ are equal to those of $w_1$.

The following lemma is derived from (3.13) of \cite{Sh}.
\begin{lemma}\label{lemma_norm_equi1}
If $\mathscr{T}$ is regular, then there exist positive constants $C_1$ and $C_2$ such that for all $\mathcal {T}\in \mathscr{T}$, all $K\in \mathcal {T}$
and all $w\in \mathbb{U}_{\mathcal{T}}$
\begin{equation}\label{lemma_norm_equi1_mid2}
|w_1|_{1,K} \leq C_1 |w|_{1,K}, \quad  |w_2|_{1,K} \leq C_2 |w|_{1,K}
\end{equation}
where $w_1$ and $w_2$ are the two parts of $w$ as defined in (\ref{trial_spa_dec}).
\end{lemma}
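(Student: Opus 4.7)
The plan is to reduce both inequalities to an assertion on the reference square $\hat{K}$ via a finite-dimensional norm equivalence, and then scale back to a general $K\in\mathcal{T}$ using the affine mapping $\mathcal{F}_K$ together with the regularity of $\mathscr{T}$.

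On $\hat{K}$, I would work with the linear map $\hat{T}_2\colon \mathbb{U}_{\hat{K}}\to \mathbb{U}_{2,\hat{K}}$ sending $\hat{w}$ to its nonconforming part $\hat{w}_2=\sum_{j\in\mathbb{N}_2}\hat{\eta}_{4+j}(\hat{w})\hat{\phi}_{4+j}$. Because each degree of freedom $\hat{\eta}_{4+j}(\hat{w})=\int_{\hat{K}}\partial_{jj}\hat{w}$ annihilates constant functions, $\hat{T}_2$ descends to a well-defined linear map on the finite-dimensional quotient $\mathbb{U}_{\hat{K}}/\mathbb{R}$. An inspection of the explicit Wilson shape functions $\hat{\phi}_i$ shows that $|\cdot|_{1,\hat{K}}$ is a genuine norm on that quotient (the only elements of $\mathbb{U}_{\hat{K}}$ with vanishing gradient are constants), so equivalence of norms in finite dimensions yields a constant $\hat{C}_2$ with $|\hat{w}_2|_{1,\hat{K}}\le \hat{C}_2\,|\hat{w}|_{1,\hat{K}}$. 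The companion bound for $\hat{w}_1$ then comes for free from $\hat{w}_1=\hat{w}-\hat{w}_2$ and the triangle inequality.

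To transfer these reference estimates to an arbitrary $K\in\mathcal{T}$, I would use the fact that $\mathcal{F}_K$ is diagonal with entries proportional to the two side lengths $h_1,h_2$ of $K$. A direct change of variables gives
\begin{equation*}
|w|_{1,K}^2=\int_{\hat{K}}\left(\tfrac{h_2}{h_1}(\hat{\partial}_1\hat{w})^2+\tfrac{h_1}{h_2}(\hat{\partial}_2\hat{w})^2\right)d\hat{x},
\end{equation*}
and the regularity condition (\ref{regularity}) bounds $h_1/h_2$ and $h_2/h_1$ from above and below by constants depending only on $\varrho$. Hence $|w|_{1,K}$ and $|\hat{w}|_{1,\hat{K}}$ are equivalent with $\mathscr{T}$-uniform constants. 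Because the basis $\Phi_{\mathcal{T}}$ is defined by pull-back through $\mathcal{F}_K$, the splitting $w=w_1+w_2$ on $K$ is exactly the pull-back of $\hat{w}=\hat{w}_1+\hat{w}_2$; applying the reference estimate to $\hat{w}_i$ and invoking this seminorm equivalence on both sides produces (\ref{lemma_norm_equi1_mid2}).

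The only mildly delicate point is the nondegeneracy of $|\cdot|_{1,\hat{K}}$ on $\mathbb{U}_{\hat{K}}/\mathbb{R}$ that underlies the finite-dimensional norm equivalence; once constants are quotiented out this is a short computation from the explicit shape functions listed after (\ref{freedomfucntional_wil}). Everything else is routine affine scaling under the regularity hypothesis, so I do not anticipate a serious obstacle.
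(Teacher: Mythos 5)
Your argument is correct. The decomposition $w=w_1+w_2$ is indeed the pull-back of $\hat{w}=\hat{w}_1+\hat{w}_2$ because both the basis and the degrees of freedom on $K$ are defined through $\mathcal{F}_K$; the map $\hat{w}\mapsto\hat{w}_2$ kills constants (the functionals $\int_{\hat{K}}\partial_{jj}(\cdot)$ annihilate all of $\mathbb{U}_{1,\hat{K}}$, not just $\mathbb{R}$), so it is a bounded linear map on the finite-dimensional quotient $\mathbb{U}_{\hat{K}}/\mathbb{R}$ equipped with the norm $|\cdot|_{1,\hat{K}}$; and your change-of-variables identity together with the equivalent form of regularity, $\lambda_1\le r_K\le\lambda_2$ as in (\ref{regular_inequlatywilson}), gives the $\mathscr{T}$-uniform equivalence $|w|_{1,K}\sim|\hat{w}|_{1,\hat{K}}$ needed to transfer the reference bound. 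The one point of comparison worth noting: the paper does not prove this lemma at all — it is imported as a consequence of (3.13) of \cite{Sh} — so you have supplied a self-contained argument where the paper relies on a citation. Your route (finite-dimensional norm equivalence on the reference element plus affine scaling) is the standard way such estimates are established and is essentially what underlies the cited inequality of Shi; a minor observation is that by treating the two derivative directions separately one can even dispense with the regularity hypothesis for this particular lemma, since $\hat{\partial}_1\hat{w}_2$ is a multiple of $\hat{x}_1$ while $\hat{\partial}_1\hat{w}_1$ lies in $\mathrm{span}\{1,\hat{x}_2\}$, and these are linearly independent in $L^2(\hat{K})$; but since regularity is assumed anyway, your version is perfectly adequate.
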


The next lemma is proved in Lemma 2.3 of \cite{LeZ}.
\begin{lemma}\label{lemma_norm_equi2}
If $\mathscr{T}$ is regular, then there exists a positive constant $C$ such that for all $\mathcal {T}\in \mathscr{T}$, all $K\in \mathcal {T}$
and all $w\in \mathbb{U}_{\mathcal{T}}$
\begin{equation}\label{lemma_norm_equi2_mid1}
\|{w}_2\|_{0,K} \leq C h_K |w|_{1,K},
\end{equation}
where $w_2$ is the nonconforming part of $w$ as defined in (\ref{trial_spa_dec}).
\end{lemma}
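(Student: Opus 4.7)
The plan is a standard scaling argument to the reference rectangle $\hat{K}$, where the claim reduces to finite-dimensional norm equivalence. First, I would transfer to $\hat{K}$ via the affine map $\mathcal{F}_K:\hat{K}\to K$. Since $K$ is an axis-aligned rectangle with side lengths $h_1,h_2$, its Jacobian $B_K$ is diagonal with entries $h_1/2$ and $h_2/2$, and the regularity of $\mathscr{T}$ forces $h_1\sim h_2\sim h_K$ uniformly. Setting $\hat{w}:=w\circ \mathcal{F}_K$, one has $\hat{w}_2=w_2\circ\mathcal{F}_K\in\mathbb{U}_{2,\hat{K}}$, because the splitting (\ref{trial_spa_dec}) is by definition the pullback of the splitting $\mathbb{U}_{\hat{K}}=\mathbb{U}_{1,\hat{K}}+\mathbb{U}_{2,\hat{K}}$ under $\mathcal{F}_K$.

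Next, a direct change of variables gives
$$
\|w_2\|_{0,K}^2 = |\det B_K|\,\|\hat{w}_2\|_{0,\hat{K}}^2 \;\sim\; h_K^2\,\|\hat{w}_2\|_{0,\hat{K}}^2,
$$
while
$$
|w|_{1,K}^2 \;=\; \frac{h_2}{h_1}\int_{\hat{K}}|\partial_1\hat{w}|^2 + \frac{h_1}{h_2}\int_{\hat{K}}|\partial_2\hat{w}|^2 \;\sim\; |\hat{w}|_{1,\hat{K}}^2,
$$
with constants depending only on the regularity constant $\varrho$. Thus the lemma will follow once we prove the reference inequality $\|\hat{w}_2\|_{0,\hat{K}}\le C|\hat{w}|_{1,\hat{K}}$ for all $\hat{w}\in\mathbb{U}_{\hat{K}}$.

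For the reference inequality I would appeal to finite-dimensional norm equivalence. The space $\mathbb{U}_{\hat{K}}$ has dimension six, and both $\hat{w}\mapsto\|\hat{w}_2\|_{0,\hat{K}}$ and $\hat{w}\mapsto|\hat{w}|_{1,\hat{K}}$ are continuous seminorms on it, so it suffices to check that the kernel of the right-hand seminorm is contained in the kernel of the left-hand one. If $|\hat{w}|_{1,\hat{K}}=0$ then $\hat{w}$ is a constant; since $1=\hat{\phi}_1+\hat{\phi}_2+\hat{\phi}_3+\hat{\phi}_4\in\mathbb{U}_{1,\hat{K}}$, any constant lies in $\mathbb{U}_{1,\hat{K}}$, and hence $\hat{w}_2=0$. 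A routine compactness argument on the quotient $\mathbb{U}_{\hat{K}}/\mathbb{P}_0$ then supplies a universal constant $C$, and combining with the scaling estimates above concludes the proof.

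No serious obstacle is expected. The only points requiring care are (i) the partition-of-unity inclusion $\mathbb{P}_0\subseteq\mathbb{U}_{1,\hat{K}}$, which secures the kernel comparison on $\hat{K}$, and (ii) the bookkeeping of the two mesh sizes $h_1,h_2$ of the rectangle, which by (\ref{regularity}) are comparable to $h_K$ so the scaling constants depend only on $\varrho$.
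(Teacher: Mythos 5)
Your proof is correct. The paper does not actually prove this lemma; it simply cites Lemma 2.3 of \cite{LeZ}, where the corresponding estimate is established for the Wilson element on general quadrilateral meshes (via a bilinear, rather than affine, reference map). For the axis-aligned rectangular partitions used here, your self-contained argument is exactly the standard route: the decomposition $w=w_1+w_2$ is by definition the pullback of $\mathbb{U}_{\hat{K}}=\mathbb{U}_{1,\hat{K}}\oplus\mathbb{U}_{2,\hat{K}}$, so $\widehat{w_2}=(\hat{w})_2$; the scaling identities you write are the correct ones for a diagonal Jacobian, with the ratio $h_{2,K}/h_{1,K}$ controlled by (\ref{regular_inequlatywilson}); and the reference inequality follows from finite-dimensional seminorm comparison once you observe $\mathbb{P}_0\subseteq\mathbb{U}_{1,\hat{K}}$ (indeed $\sum_{i=1}^4\hat{\phi}_i=1$), so that $\ker|\cdot|_{1,\hat{K}}\subseteq\ker\bigl(\hat{w}\mapsto\|\hat{w}_2\|_{0,\hat{K}}\bigr)$. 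The only thing your write-up buys beyond the paper is self-containedness; the only thing the citation buys is generality to non-affine quadrilaterals, which is not needed here. A minor alternative you could mention: since $\hat{w}_2=\hat{\eta}_5(\hat{w})\hat{\phi}_5+\hat{\eta}_6(\hat{w})\hat{\phi}_6$ and $\hat{\eta}_{4+j}(\hat{w})=\int_{\hat{K}}\partial_{jj}\hat{w}$ with $\partial_{jj}\hat{w}$ constant, one can bound $|\hat{\eta}_{4+j}(\hat{w})|$ by $C|\hat{w}|_{1,\hat{K}}$ directly by integrating by parts on $\hat{K}$, which gives the reference inequality without invoking the quotient-space compactness argument; but your version is equally valid.
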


For each function $w$ defined on $K$, we associate a function $\hat{w}$ defined on $\hat{K}$ by
\begin{equation}\label{funcion_K}
\hat{w}:= w \cdot \mathcal {F}_K.
\end{equation}
The following lemma is derived from (2.5) of \cite{Sh}.
\begin{lemma}\label{lemma_norm_equi3}
If $\mathscr{T}$ is regular, then there exist positive constants $C_1$ and $C_2$ such that for all $\mathcal {T}\in \mathscr{T}$, all $K\in \mathcal {T}$
and all $w\in \mathbb{H}^1(K)$
$$
C_1 |w|_{1,K} \leq|\hat{w}|_{1,\hat{K}} \leq C_2|w|_{1,K},
$$
\end{lemma}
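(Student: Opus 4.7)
The plan is to prove Lemma \ref{lemma_norm_equi3} by the standard affine-equivalence argument, tracking the two sides of the inequality through the change of variables $x = \mathcal{F}_K(\hat{x})$ and then using the mesh regularity to bound the Jacobian norms uniformly.

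First I would write $\mathcal{F}_K(\hat{x}) = B_K\hat{x} + c_K$ with $B_K$ constant. Since the reference element $\hat{K}$ is the square $[-1,1]^2$ and $K$ is a rectangle with sides parallel to the axes of lengths $h_{K,1}$ and $h_{K,2}$, the matrix $B_K$ is diagonal, $B_K = \tfrac{1}{2}\mathrm{diag}(h_{K,1}, h_{K,2})$, so $|\det B_K| = h_{K,1}h_{K,2}/4 = |K|/4$. By the chain rule, for $w \in \mathbb{H}^1(K)$ and $\hat{w} = w\circ \mathcal{F}_K$, one has $\hat{\nabla}\hat{w}(\hat{x}) = B_K^T \nabla w(\mathcal{F}_K(\hat{x}))$, and by change of variables
\[
    |\hat{w}|_{1,\hat{K}}^2 = \int_{\hat{K}} |B_K^T \nabla w(\mathcal{F}_K(\hat{x}))|^2\, d\hat{x} = |\det B_K|^{-1}\int_{K} |B_K^T \nabla w(x)|^2\, dx.
\]

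Next I would bound the integrand by the spectral norms of $B_K$ and $B_K^{-1}$. Since $B_K$ is diagonal, $\|B_K\|_2 = \tfrac{1}{2}\max(h_{K,1},h_{K,2})$ and $\|B_K^{-1}\|_2 = 2/\min(h_{K,1},h_{K,2})$. This yields
\[
    |\hat{w}|_{1,\hat{K}}^2 \le \frac{\|B_K\|_2^2}{|\det B_K|}\,|w|_{1,K}^2 = \frac{\max(h_{K,1},h_{K,2})^2}{h_{K,1}h_{K,2}}\,|w|_{1,K}^2,
\]
and analogously, reversing the roles via $\nabla w = B_K^{-T}\hat{\nabla}\hat{w}$,
\[
    |w|_{1,K}^2 \le \|B_K^{-1}\|_2^2\,|\det B_K|\,|\hat{w}|_{1,\hat{K}}^2 = \frac{h_{K,1}h_{K,2}}{\min(h_{K,1},h_{K,2})^2}\,|\hat{w}|_{1,\hat{K}}^2.
\]

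Finally, I would invoke the regularity assumption (\ref{regularity}) to close the argument. For an axis-parallel rectangle, $h_K = (h_{K,1}^2+h_{K,2}^2)^{1/2}$ and $\varrho_K = \min(h_{K,1},h_{K,2})$, so (\ref{regularity}) is exactly the statement that the aspect ratio $\max(h_{K,1},h_{K,2})/\min(h_{K,1},h_{K,2})$ is bounded above by a constant depending only on $\varrho$. Both prefactors $\max(h_{K,1},h_{K,2})^2/(h_{K,1}h_{K,2})$ and $h_{K,1}h_{K,2}/\min(h_{K,1},h_{K,2})^2$ are equal to this aspect ratio, hence are bounded uniformly over $\mathcal{T}\in\mathscr{T}$ and $K\in\mathcal{T}$. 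Taking the square roots yields the constants $C_1$ and $C_2$. The only potential subtlety is confirming that the rectangle geometry forces $\varrho_K = \min(h_{K,1},h_{K,2})$; this is a direct geometric observation about the largest disk inscribed in a rectangle, so no serious obstacle arises in the argument.
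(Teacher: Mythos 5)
Your proof is correct: the affine scaling computation, the identification of both prefactors with the aspect ratio $\max(h_{K,1},h_{K,2})/\min(h_{K,1},h_{K,2})$, and the observation that regularity \eqref{regularity} bounds this ratio (equivalently, \eqref{regular_inequlatywilson}) are all sound. The paper gives no proof of its own, simply deriving the lemma from (2.5) of \cite{Sh}, and your argument is precisely the standard scaling proof behind that cited estimate, so there is nothing to add.
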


For each $K\in \mathcal {T}$, we denote its vertices by $P_{i,K}, i\in \mathbb{N}_4$ anticlockwise and set $P_{5,K}:=P_{1,K}$.
In the following proposition, we establish inequality (\ref{boundedness_dis_norm1}) for the hybrid Wilson FVM.
\begin{proposition}\label{norm_eqi_wilson}
If $\mathscr{T}$ is regular, then for all $w\in \mathbb{U}_{\mathcal{T}}$ there holds
\begin{equation}\label{eqnorm_wilson}
|\Pi_{\mathcal{T}^{*}}w|_{1,\mathbb{V}_{\mathcal {T}^*}} \le C
\|w\|_{1,\mathcal {T}}.
\end{equation}
\end{proposition}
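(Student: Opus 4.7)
The plan is to exploit the decomposition $\Pi_{\mathcal{T}^{*}}w=\Pi_{\mathcal{T}^{*}}w_1+w_2$ of (\ref{test_spa_dec}) and estimate the two summands separately against $\|w\|_{1,\mathcal{T}}$. The discrete seminorm in (\ref{discretenorm_test}) splits, on each $K\in\mathcal{T}$, into a piecewise $H^1$-seminorm on the pieces $K^{*}\cap K$ and a jump penalty on the interior dual gridlines $L^{*}_K$. My strategy is to show that for the nonconforming bubble part $w_2$ the jump contribution vanishes while the seminorm contribution reproduces $|w_2|_{1,K}$, whereas for the conforming part $\Pi_{\mathcal{T}^{*}}w_1$ the seminorm contribution vanishes while the jump contribution collapses to a finite sum of squared differences of $w_1$ at adjacent vertices of $K$.

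For the bubble piece, $w_2|_K\circ\mathcal{F}_K$ lies in $\textrm{span}\{\hat{\phi}_5,\hat{\phi}_6\}$, so $w_2$ is a smooth polynomial on each $K$. Since every $\ell^{*}\in L^{*}_K$ lies in the interior of $K$, the jump $[w_2]$ vanishes on $\ell^{*}$, and because the intersections $K^{*}\cap K$ tile $K$, one obtains $|w_2|_{1,\mathbb{V}_{\mathcal{T}^{*}},K}^{2}=|w_2|_{1,K}^{2}$. Summing over $K$ and invoking Lemma \ref{lemma_norm_equi1} then yields $|w_2|_{1,\mathbb{V}_{\mathcal{T}^{*}}}\le C\|w\|_{1,\mathcal{T}}$.

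For the conforming piece, $\Pi_{\mathcal{T}^{*}}w_1$ equals the constant $w_1(P_{i,K})$ on the sub-piece $K^{*}_{P_{i,K}}\cap K$, so its gradient vanishes on each open piece and the seminorm contribution is zero. Each of the four interior gridlines in $L^{*}_K$ connects the centre $Q_K$ to the midpoint of an edge $P_{i,K}P_{i+1,K}$ and separates the sub-pieces attached to those two vertices, so the jump across such $\ell^{*}$ is the constant $w_1(P_{i+1,K})-w_1(P_{i,K})$. The factor $|\ell^{*}|^{-1}\int_{\ell^{*}}$ collapses the contribution to a squared nodal difference, giving $|\Pi_{\mathcal{T}^{*}}w_1|_{1,\mathbb{V}_{\mathcal{T}^{*}},K}^{2}=\sum_{i=1}^{4}\bigl(w_1(P_{i,K})-w_1(P_{i+1,K})\bigr)^{2}$, with no leftover $h_K$-dependence.

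It remains to dominate this sum by $|w_1|_{1,K}^{2}$. I would transport the identity to $\hat{K}$ via $\hat{w}_1=w_1\circ\mathcal{F}_K$: vertex values are preserved, and on the finite-dimensional space $\mathbb{U}_{1,\hat{K}}$ both $\sum_{i=1}^{4}\bigl(\hat{w}_1(\hat{P}_i)-\hat{w}_1(\hat{P}_{i+1})\bigr)^{2}$ and $|\hat{w}_1|_{1,\hat{K}}^{2}$ are seminorms whose common kernel is the constants, hence norm-equivalent with mesh-independent constants. Combining this equivalence with Lemma \ref{lemma_norm_equi3} to return to $K$ and with Lemma \ref{lemma_norm_equi1} to bound $|w_1|_{1,K}$ by $|w|_{1,K}$, then summing over $K\in\mathcal{T}$ and adding the estimate for $w_2$, delivers (\ref{eqnorm_wilson}). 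The main technical obstacle is the geometric bookkeeping in the jump step for $\Pi_{\mathcal{T}^{*}}w_1$: one must verify that $L^{*}_K$ consists of exactly the four segments $\overline{Q_KM_i}$, that each pairs up the sub-pieces attached to adjacent vertices of $K$, and that the cancellation $|\ell^{*}|^{-1}|\ell^{*}|=1$ produces a purely nodal expression so that the reference-element equivalence really yields a constant independent of $h_K$.
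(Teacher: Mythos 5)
Your proposal is correct and follows essentially the same route as the paper's proof: the same decomposition $\Pi_{\mathcal{T}^{*}}w=\Pi_{\mathcal{T}^{*}}w_1+w_2$, the observation that the jump term carries all of $\Pi_{\mathcal{T}^{*}}w_1$ while the piecewise seminorm carries all of $w_2$, the reference-element equivalence of the nodal-difference quadratic form with $|\hat{w}_1|_{1,\hat{K}}^{2}$ (same kernel, the constants), and the reduction to $|w|_{1,K}$ via Lemmas \ref{lemma_norm_equi3} and \ref{lemma_norm_equi1}. No gaps; the geometric bookkeeping you flag is exactly what the paper's displayed identity for $|w_1^*|_{1,\mathbb{V}_{\mathcal {T}^*},K}^2$ records.
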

\begin{proof}
For each $\mathcal{T}\in\mathscr{T}$ and each $w\in \mathbb{U}_{\mathcal{T}}$, let $w^*:=\Pi_{\mathcal{T}^{*}}w$.
By (\ref{trial_spa_dec}) and (\ref{test_spa_dec}), we have that $w=w_1+w_2$ and $w^*=w_1^* + w_2$, where $w_1^*:=\Pi_{\mathcal{T}^{*}}w_1$. To derive the desired inequality (\ref{eqnorm_wilson}) of this lemma, it suffices to prove that
\begin{equation}\label{norm_eqi_wilson_mid1}
|w_1^*|_{1,\mathbb{V}_{\mathcal {T}^*},K}^2 \leq C |w|_{1,K}^2,  \quad |w_2|_{1,\mathbb{V}_{\mathcal {T}^*},K}^2 \leq C |w|_{1,K}^2.
\end{equation}

We begin to prove the first inequality of (\ref{norm_eqi_wilson_mid1}).  Note that
$$
|w_1^*|_{1,\mathbb{V}_{\mathcal {T}^*},K}^2 = \sum_{\ell^*\in
L^*_K}|\ell^*|^{-1}\int_{\ell^*} [w_1^*]^2 = \sum_{i\in \mathbb{N}_4} \left(w_1(P_{i,K})-w_1(P_{i+1,K})\right)^2,
$$
Since $\sum_{i\in \mathbb{N}_4} \left(w_1(P_{i,K})-w_1(P_{i+1,K})\right)^2$ and $|\hat{w}_1|_{1,\hat{K}}^2$ are nonnegative quadratic forms of $w_1(P_{i,K}), i\in \mathbb{N}_4$ and they have the same null space, it follows from \cite{HJ} that they are equivalent. Thus, there exists a positive constant $C$ independent of grids such that
\begin{equation}\label{norm_eqi_wilson_mid2}
|w_1^*|_{1,\mathbb{V}_{\mathcal {T}^*},K}^2 \leq C |\hat{w}_1|_{1,\hat{K}}^2.
\end{equation}
Combining (\ref{norm_eqi_wilson_mid2}) and Lemma \ref{lemma_norm_equi3} gives that
\begin{equation}\label{norm_eqi_wilson_mid3}
|w_1^*|_{1,\mathbb{V}_{\mathcal {T}^*},K}^2 \leq C |{w}_1|_{1,K}^2.
\end{equation}
Then, the first inequality of (\ref{norm_eqi_wilson_mid1}) is derived from (\ref{norm_eqi_wilson_mid3}) and the first inequality of (\ref{lemma_norm_equi1_mid2}) in Lemma \ref{lemma_norm_equi1}.

Since $w_2$ is continuous on each $K\in \mathcal {T}$, we observe that
$$
|w_2|_{1,\mathbb{V}_{\mathcal {T}^*},K}^2 = \sum_{K^*\in\mathcal{T}^*}
|w_2|_{1,K^*\cap K}^2 =|w_2|_{1,K}^2.
$$
The above equation and the second inequality of (\ref{lemma_norm_equi1_mid2}) yield the second inequality of (\ref{norm_eqi_wilson_mid1}).
\end{proof}

We verify inequality (\ref{boundedness_dis_norm2}) for the hybrid Wilson FVM in the next proposition.
\begin{proposition}\label{L2testspace_wilson}
If $\mathscr{T}$ is regular and $h<1$, then for all $\mathcal{T}\in\mathscr{T}$ and all $w\in \mathbb{U}_{\mathcal{T}}$ there holds
$$
\|\Pi_{\mathcal{T}^{*}}w\|_{0,\Omega} \leq C \|w\|_{1,\mathcal {T}}.
$$
\end{proposition}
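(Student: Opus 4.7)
The plan is to exploit the decomposition $\Pi_{\mathcal{T}^{*}}w=\Pi_{\mathcal{T}^{*}}w_1+w_2$ coming from (\ref{test_spa_dec}) and bound each piece separately in $L^2(\Omega)$ by $\|w\|_{1,\mathcal{T}}$, then combine by the triangle inequality.

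For the conforming piece, I first localize: on each $K\in\mathcal{T}$, the function $\Pi_{\mathcal{T}^{*}}w_1$ is piecewise constant on the four subrectangles $K^*_{P_{i,K}}\cap K$, each of area $|K|/4$, taking the value $w_1(P_{i,K})$ there. Hence
\begin{equation*}
\|\Pi_{\mathcal{T}^{*}}w_1\|_{0,K}^2=\tfrac{|K|}{4}\sum_{i\in\mathbb{N}_4}w_1(P_{i,K})^2.
\end{equation*}
Pulling back by the affine map $\mathcal{F}_K$, I would then use that $\int_{\hat K}\hat w_1^2$ is a positive definite quadratic form in the four vertex values of $\hat w_1$, so $\int_{\hat K}\hat w_1^2\ge c\sum_i w_1(P_{i,K})^2$ with $c$ independent of $K$; multiplying by the Jacobian factor $|K|/|\hat K|$ gives $\|w_1\|_{0,K}^2\ge c'|K|\sum_i w_1(P_{i,K})^2$, so that $\|\Pi_{\mathcal{T}^{*}}w_1\|_{0,K}\le C\|w_1\|_{0,K}$. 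Summing over $K$ yields $\|\Pi_{\mathcal{T}^{*}}w_1\|_{0,\Omega}\le C\|w_1\|_{0,\Omega}$. Since $w_1$ is the conforming part (continuous on $\bar\Omega$, bilinear on each $K$, vanishing at all boundary vertices by the Dirichlet condition built into $\mathbb{U}_{\mathcal{T}}$), it lies in $\mathbb{H}^1_0(\Omega)$, so the Poincar\'e inequality gives $\|w_1\|_{0,\Omega}\le C|w_1|_{1,\Omega}$. Invoking the first estimate of Lemma \ref{lemma_norm_equi1} elementwise and summing provides $|w_1|_{1,\Omega}\le C\|w\|_{1,\mathcal{T}}$, so altogether $\|\Pi_{\mathcal{T}^{*}}w_1\|_{0,\Omega}\le C\|w\|_{1,\mathcal{T}}$.

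For the nonconforming piece, Lemma \ref{lemma_norm_equi2} gives $\|w_2\|_{0,K}\le Ch_K|w|_{1,K}$ on every $K$; squaring, summing, and using $h_K\le h<1$ produces $\|w_2\|_{0,\Omega}^2\le Ch^2\|w\|_{1,\mathcal{T}}^2\le C\|w\|_{1,\mathcal{T}}^2$. Combining the two estimates with $\Pi_{\mathcal{T}^{*}}w=\Pi_{\mathcal{T}^{*}}w_1+w_2$ finishes the proof.

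I do not expect a serious obstacle. The only non-routine step is the reference-element argument in the first paragraph, where one must observe that $\int_{\hat K}\hat w_1^2$ is a norm on the four-dimensional space of bilinear functions parametrized by vertex values so that norm equivalence on finite-dimensional spaces gives a mesh-independent constant; this is entirely analogous to the step used for the C-R FVM in Lemma \ref{L2testspaceL2}. The hypothesis $h<1$ enters solely to absorb the factor $h$ coming from Lemma \ref{lemma_norm_equi2}.
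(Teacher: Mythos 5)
Your proposal is correct and follows essentially the same route as the paper's own proof: the same decomposition $\Pi_{\mathcal{T}^{*}}w=\Pi_{\mathcal{T}^{*}}w_1+w_2$, the same reference-element positive-definite-quadratic-form argument giving $\|\Pi_{\mathcal{T}^{*}}w_1\|_{0,K}\le C\|w_1\|_{0,K}$, the same use of the Poincar\'e inequality together with Lemma \ref{lemma_norm_equi1}, and the same invocation of Lemma \ref{lemma_norm_equi2} with $h<1$ for the nonconforming part. Your explicit justification that $w_1\in\mathbb{H}^1_0(\Omega)$ before applying Poincar\'e is a small clarification the paper leaves implicit.
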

\begin{proof}
For each $\mathcal{T}\in\mathscr{T}$ and each $w\in \mathbb{U}_{\mathcal{T}}$, let $w^*:=\Pi_{\mathcal{T}^{*}}w$.
According to(\ref{trial_spa_dec}) and (\ref{test_spa_dec}), we have the decomposition $w=w_1+w_2$ and $w^*=w_1^* + w_2$, where $w_1^*:=\Pi_{\mathcal{T}^{*}}w_1$.

For each $K\in \mathcal {T}$ and each $K^*\in \mathcal {T}^*$, since $w_1^*$ is constant on $K\cap K^*$, we observe that
$$
\|w_1^*\|_{0,K}^2= \frac{1}{4} |K|\cdot \sum_{i\in \mathbb{N}_4} w_1^2(P_{i,K}).
$$
By changing variables, we get that
$$
\|w_1\|_{0,K}^2 = \frac{1}{4}|K|\cdot \|\hat{w}_1\|_{0,\hat{K}}^2.
$$
By simple calculation, we know that both $\sum_{i\in \mathbb{N}_4} w_1^2(P_{i,K})$ and $|\hat{w}_1|_{0,\hat{K}}^2$ are positive definite quadratic forms of $w_1(P_{i,K}), i\in \mathbb{N}_4$. Thus, there exists a positive constant $C_1$ independent of meshes such that
\begin{equation}\label{L2testspace_wilson_mid1}
\|w_1^*\|_{0,K}^2 \leq C_1^2 \|w_1\|_{0,K}^2.
\end{equation}
From (\ref{L2testspace_wilson_mid1}), the Poincar$\acute{e}$ inequality and the first inequality of (\ref{lemma_norm_equi1_mid2}) in Lemma \ref{lemma_norm_equi1}, we derive that
\begin{equation}\label{L2testspace_wilson_mid2}
\|w_1^*\|_{0,\Omega} \leq C_1 \|w_1\|_{0,\Omega} \leq C_1 |w_1|_{1,\Omega} \leq C \|w\|_{1,\mathcal {T}}.
\end{equation}
It follows from Lemma \ref{lemma_norm_equi2} that
\begin{equation}\label{L2testspace_wilson_mid3}
\|w_2\|_{0,\Omega} \leq C h \|w\|_{1,\mathcal {T}}.
\end{equation}
Since $h<1$, from (\ref{L2testspace_wilson_mid2}) and (\ref{L2testspace_wilson_mid3}), we conclude that
$$
\|w^*\|_{0,\Omega} \leq \|w_1^*\|_{0,\Omega} + \|w_2\|_{0,\Omega} \leq C\|w\|_{1,\mathcal {T}},
$$
which proves the desired inequality of this proposition.
\end{proof}

We estimate the nonconforming error as defined in (\ref{nonconforming error}) for the hybrid Wilson FVM.
\begin{proposition}\label{E_wilson}
Let $u\in \mathbb{H}_0^1(\Omega)\cap \mathbb{H}^2(\Omega)$ be the solution of
(\ref{eq:poisson_equation}) and $u_\mathcal{T}\in
\mathbb{U}_\mathcal{T}$ be the solution of the hybrid Wilson FVM equation.
If $\mathscr{T}$ is regular, then there exists a positive constant $c$ such that for all $\mathcal{T}$ and all $v\in \mathbb{U}_{\mathcal{T}}$
$$
|E_\mathcal {T}(u,v)| \leq c h |u|_{2} \|v\|_{1, \mathcal {T}}.
$$
\end{proposition}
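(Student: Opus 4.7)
The plan is to convert $E_\mathcal{T}(u,v)$ into a sum of boundary fluxes over the primary elements via Green's formula, then use the Wilson decomposition $v^*=\Pi_{\mathcal{T}^*}v_1+v_2$ to split the result into a piece that vanishes outright and a piece that one controls by the classical Wilson cancellation on the nonconforming bubbles.

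On each $K^*\cap K$ the function $v^*$ is a polynomial, so Green's formula gives
$$\int_{K^*\cap K}\nabla u^T\mathbf{a}\nabla v^*=-\int_{K^*\cap K}v^*\nabla\cdot(\mathbf{a}\nabla u)+\int_{\partial(K^*\cap K)}v^*(\mathbf{a}\nabla u)\cdot\mathbf{n}\,ds.$$
The boundary $\partial(K^*\cap K)$ splits into $\partial K^*\cap\mathrm{int}\,K$ and $\partial K\cap K^*$; the first piece cancels exactly the explicit flux term in the definition of $a_K(u,v^*)$, and combining what remains with $-\nabla\cdot(\mathbf{a}\nabla u)+bu=f$ yields the clean identity
$$E_\mathcal{T}(u,v)=\sum_{K\in\mathcal{T}}\int_{\partial K}v^*\,(\mathbf{a}\nabla u)\cdot\mathbf{n}_K\,ds.$$
Writing $v^*=v_1^*+v_2$ with $v_1^*:=\Pi_{\mathcal{T}^*}v_1$ splits the right side into $I_1+I_2$. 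For $I_1$, $v_1^*$ equals the constant $v_1(P)$ on the dual control volume $K^*_P$, so on any interior primary edge the two adjacent rectangles see the same value from the same control volumes crossing that edge, and the two contributions cancel by the opposite orientations of $\mathbf{n}_K$; on a boundary edge the Dirichlet condition built into the Wilson nodal values gives $v_1(P)=0$ at every boundary vertex, hence $v_1^*$ vanishes on $\partial\Omega$. Therefore $I_1=0$.

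The estimate of $I_2$ is the crux. A direct computation on the reference rectangle gives $\int_{\hat K}\partial_j\hat\phi_k\,d\hat x=0$ for $k=5,6$ and $j=1,2$, so after the affine change of variables $\int_K\nabla v_2\,dx=0$ for every $K\in\mathcal{T}$; by the divergence theorem this implies $\int_{\partial K}v_2\,\mathbf{c}\cdot\mathbf{n}_K\,ds=0$ for every constant vector $\mathbf{c}\in\mathbb{R}^2$. Choosing $\mathbf{c}_K:=|K|^{-1}\int_K\mathbf{a}\nabla u\,dx$ converts the local contribution into $\int_{\partial K}v_2(\mathbf{a}\nabla u-\mathbf{c}_K)\cdot\mathbf{n}_K\,ds$. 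Cauchy--Schwarz, the scaling-based trace bound $\|v_2\|_{0,\partial K}\le Ch_K^{1/2}|v_2|_{1,K}$, together with the combined trace/Poincar\'e estimate $\|\mathbf{a}\nabla u-\mathbf{c}_K\|_{0,\partial K}\le Ch_K^{1/2}|u|_{2,K}$ (the lower-order $\nabla\mathbf{a}\cdot\nabla u$ term being absorbed using $\mathbf{a}\in W^{1,\infty}$ and the $H^2$ regularity of $u$), produce the element-wise bound $Ch_K|v_2|_{1,K}|u|_{2,K}$. Summing by Cauchy--Schwarz in $K$ and invoking Lemma \ref{lemma_norm_equi1} to replace $|v_2|_{1,K}$ by $C|v|_{1,K}$ yields $|I_2|\le Ch|u|_2\|v\|_{1,\mathcal{T}}$.

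The main obstacle is spotting and exploiting the Wilson cancellation $\int_K\nabla v_2\,dx=0$, which is precisely what upgrades an apparent $O(1)$ boundary-flux error into the desired $O(h)$ bound; the Green's-formula simplification, the continuity and boundary vanishing of $v_1^*$, and the scaling/Poincar\'e estimates are standard bookkeeping once this orthogonality is in hand.
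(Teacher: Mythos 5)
Your proof is correct and follows the same skeleton as the paper's: both split $\Pi_{\mathcal{T}^*}v$ into the piecewise-constant part $v_1^*$ and the nonconforming bubble part $v_2$, observe that the $v_1^*$ contribution vanishes (you by direct cancellation of the primary-edge fluxes, the paper by noting $v_1^*\in\mathbb{H}^1_{\mathcal{T}^{*}}(\Omega)$ and invoking \eqref{variationaleq}, which is the same Green's-formula computation), and are then left with the Wilson consistency term generated by $v_2$. The only real difference is that the paper disposes of this last term in one line by citing the classical nonconforming-FE consistency estimate for the Wilson element (Ciarlet, Shi), whereas you reprove it: the reduction to $\sum_{K}\int_{\partial K}v_2(\mathbf{a}\nabla u)\cdot\mathbf{n}_K$, the orthogonality $\int_K\nabla v_2\,dx=0$, the subtraction of the elementwise mean flux $\mathbf{c}_K$, and the trace/scaling bounds are exactly the ingredients of that classical proof, so your argument is a correct, self-contained version of the step the paper outsources. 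One small caveat, which applies to the paper's statement as well: for variable $\mathbf{a}\in W^{1,\infty}$ the quantity $\|\mathbf{a}\nabla u-\mathbf{c}_K\|_{0,\partial K}$ is controlled by $Ch_K^{1/2}(|u|_{1,K}+|u|_{2,K})$, so what one actually obtains is $Ch\|u\|_{2}\|v\|_{1,\mathcal{T}}$ rather than $Ch|u|_{2}\|v\|_{1,\mathcal{T}}$; the pure seminorm bound holds when $\mathbf{a}$ is (piecewise) constant. Your remark about ``absorbing'' the lower-order term acknowledges this but does not actually eliminate the $|u|_{1}$ contribution.
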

\begin{proof}
For each $v\in \mathbb{U}_{\mathcal{T}}$, we let $v^*:= \Pi_{\mathcal {T}^*}v$. From (\ref{test_spa_dec}), we have that $v^*=v^*_1+v_2$, where $v^*_1$ is a piecewise constant function with
respect to $\mathcal {T}^*$ and $v_2\in \mathbb{U}_{\mathcal{T}}$.
From the definition, we get that
\begin{equation}\label{E_wilson_mid1}
    E_\mathcal {T}(u,v)=a_{\mathcal {T}}(u, v^*)- (f, v^*) = a_{\mathcal {T}}(u, v^*_1)+a_{\mathcal {T}}(u, v^*_2) - (f, v^*_1)-(f, v^*_2).
\end{equation}
Since $v^*_1\in \mathbb{H}^1_{\mathcal{T}^{*}}(\Omega)$, from (\ref{variationaleq}), we note that
\begin{equation}\label{E_wilson_mid2}
    a_{\mathcal {T}}(u, v^*_1) = (f, v^*_1).
\end{equation}
Combining (\ref{E_wilson_mid1}) and (\ref{E_wilson_mid2}) yields
$$
E_\mathcal {T}(u,v) = a_{\mathcal {T}}(u, v_2)-(f, v_2).
$$
Since $v_2\in \mathbb{U}_{\mathcal{T}}$ and $\mathscr{T}$ is regular, employing the result in the nonconforming FE method yields the desired inequality of this proposition (cf. \cite{Ciarlet}).
\end{proof}

We introduce an interpolation projection operator to the trial space. For any function $\hat{v}\in \mathbb{H}^2(\hat{K})$, we define the interpolation function $\hat{P}\hat{v} \in \mathbb{U}_{\hat{K}}$ as follows
$$
\hat{\eta}_i (\hat{P}\hat{v}) = \hat{\eta}_i (\hat{v}), \quad i\in \mathbb{N}_6,
$$
where $\hat{\eta}_i$ are defined as in (\ref{freedomfucntional_wil}). Then, for any function $v\in \mathbb{H}^2({K})$, the corresponding function $P_K v$ is defined by
$$
\widehat{P_K v} = \hat{P} \hat{v}, \quad  v=\hat{v} \cdot \mathcal {F}_K^{-1}.
$$
For each $v\in \mathbb{H}^2(\Omega)$, let the interpolation function $P_\mathcal {T} v \in \mathbb{U}_{\mathcal{T}}$ be such that
$$
 P_\mathcal {T} v|_K =P_K v,  \quad  \mbox{for any} \ K\in \mathcal {T}.
$$
By virtue of the decomposition (\ref{trial_spa_dec}), the interpolation function $P_\mathcal {T} v$ can be written as the sum of the
conforming part denoted by $Q_\mathcal {T} v$ and the nonconforming part denoted by $R_\mathcal {T}v$, that is,
\begin{equation}\label{projection}
 P_\mathcal {T} v = Q_\mathcal {T} v + R_\mathcal {T}v.
\end{equation}

The interpolation error estimates presented in the next lemma are derived from (5.16) and (5.17) of \cite{Sh}.
\begin{lemma}\label{interpolation_error}
For any $v\in \mathbb{H}^2(\Omega)$, there holds
$$
\|v- P_\mathcal{T} v  \|_{1,\mathcal{T}} \leq Ch|v|_2, \quad \|v- P_\mathcal{T} v  \|_{0} \leq Ch^2 |v|_2,
$$
and
$$
|v- Q_\mathcal{T} v  |_{1} \leq Ch|v|_2, \  \quad \|v- Q_\mathcal{T} v  \|_{0} \leq Ch^2 |v|_2.
$$
\end{lemma}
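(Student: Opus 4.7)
The plan is to reduce both the $P_{\mathcal T}$ and $Q_{\mathcal T}$ estimates to the reference rectangle $\hat K$ via the affine map $\mathcal F_K$ and invoke the Bramble--Hilbert lemma elementwise. First, I would verify the two hypotheses that Bramble--Hilbert needs for the reference interpolants. Since the six functionals in $\hat\Sigma$ are unisolvent on $\mathbb U_{\hat K}=\mathrm{span}\{1,\hat x_1,\hat x_2,\hat x_1\hat x_2,\hat x_1^2,\hat x_2^2\}$ and this space contains $\mathbb P_1(\hat K)$, the local Wilson interpolant $\hat P$ reproduces $\mathbb P_1(\hat K)$ (in fact all of $\mathbb P_2$); the bilinear interpolant $\hat Q$ at the four vertices reproduces $\mathbb P_1(\hat K)$ as well. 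The point evaluations $\hat\eta_i$ ($i\le 4$) are continuous on $\mathbb H^2(\hat K)$ by the Sobolev embedding, and the integral functionals $\hat\eta_{4+j}(w)=\int_{\hat K}\partial_{jj}w$ are continuous on $\mathbb H^2(\hat K)$ by Cauchy--Schwarz, so $\hat P,\hat Q:\mathbb H^2(\hat K)\to\mathbb U_{\hat K}$ are bounded. The Bramble--Hilbert lemma then gives
\[
\|\hat v-\hat P\hat v\|_{k,\hat K}+\|\hat v-\hat Q\hat v\|_{k,\hat K}\le C\,|\hat v|_{2,\hat K},\qquad k=0,1,
\]
for every $\hat v\in\mathbb H^2(\hat K)$.

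Next I would transfer the estimate to a generic $K\in\mathcal T$. Since $\mathcal T$ is regular and consists of rectangles, $\mathcal F_K(\hat x)=B_K\hat x+b_K$ with $B_K$ diagonal whose entries are of order $h_K$. A standard change of variable yields $|\hat v|_{m,\hat K}\le C\,h_K^{m-1}|v|_{m,K}$ and $|v|_{k,K}\le C\,h_K^{1-k}|\hat v|_{k,\hat K}$ whenever $v\in \mathbb H^m(K)$ and $\hat v=v\circ\mathcal F_K$. Applied to the differences $\hat v-\hat P\hat v$ and $\hat v-\hat Q\hat v$, combined with the reference estimate above, these inequalities produce the local bounds
\[
|v-P_Kv|_{k,K}+|v-Q_Kv|_{k,K}\le C\,h_K^{2-k}|v|_{2,K},\qquad k=0,1.
\]

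Finally, I would square and sum over $K\in\mathcal T$. By the definition of $\|\cdot\|_{1,\mathcal T}$ we have $\|v-P_{\mathcal T}v\|_{1,\mathcal T}^2=\sum_K|v-P_Kv|_{1,K}^2$, and trivially $\|v-P_{\mathcal T}v\|_0^2=\sum_K\|v-P_Kv\|_{0,K}^2$. For the conforming part, $Q_{\mathcal T}v$ is a continuous piecewise bilinear function, hence $Q_{\mathcal T}v\in\mathbb H^1(\Omega)$, so $|v-Q_{\mathcal T}v|_1^2=\sum_K|v-Q_Kv|_{1,K}^2$ and similarly for the $L^2$ norm. Using $h_K\le h$ and summing then delivers the four inequalities claimed by the lemma. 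I do not expect a genuine obstacle here: the argument is a direct application of Ciarlet-style interpolation theory adapted to the Wilson element, which is presumably why the authors simply cite \cite{Sh}; the only item requiring a moment's thought is the $\mathbb H^2(\hat K)$-continuity of the second-derivative integral functionals $\hat\eta_{4+j}$, which is immediate from Cauchy--Schwarz.
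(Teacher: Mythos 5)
Your argument is correct, and it coincides with the standard proof underlying this lemma: the paper itself offers no proof, merely citing (5.16)--(5.17) of \cite{Sh}, and those estimates are obtained exactly by the route you describe (unisolvence and $\mathbb{P}_1$-reproduction of the Wilson and bilinear interpolants on $\hat{K}$, $\mathbb{H}^2$-continuity of the degrees of freedom, Bramble--Hilbert on the reference square, and affine scaling with $B_K=\mathrm{diag}(h_{1,K},h_{2,K})$ under the regularity hypothesis \eqref{regular_inequlatywilson}). Your observations that $Q_K v$ is precisely the bilinear vertex interpolant and that $Q_{\mathcal{T}}v\in\mathbb{H}^1(\Omega)$, so the global seminorms sum elementwise, close the only points needing care.
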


We are ready to get the convergence theorem for the hybrid Wilson FVM.

\begin{theorem}\label{thm:WilsonConvergence}
Let $u\in \mathbb{H}_0^1(\Omega)\cap \mathbb{H}^2(\Omega)$ be the solution of
(\ref{eq:poisson_equation}).
Suppose that $\mathscr{T}$ is regular. If the family $\mathscr{A}_\mathscr{T}$ of the discrete bilinear
forms is
uniformly elliptic, then for each $\mathcal{T}\in \mathscr{T}$ the hybrid Wilson FVM equation has a unique solution $u_\mathcal{T}\in
\mathbb{U}_\mathcal{T}$, and there exists a positive constant $C$
such that for all $\mathcal{T}\in \mathscr{T}$
$$
\|u-u_\mathcal{T}\|_{1,\mathcal {T}}\le C h |u|_{2}.
$$
\end{theorem}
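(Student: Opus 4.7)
The plan is to reduce the theorem to the abstract convergence result Theorem \ref{thm: convergence_theorem} and then estimate both terms on its right-hand side using material already developed in this section. First I would verify that the three hypotheses of Theorem \ref{thm: convergence_theorem} are in place: regularity of $\mathscr{T}$ is assumed, uniform ellipticity of $\mathscr{A}_\mathscr{T}$ is assumed, and the discrete norm inequalities (\ref{boundedness_dis_norm1}) and (\ref{boundedness_dis_norm2}) were proved in Propositions \ref{norm_eqi_wilson} and \ref{L2testspace_wilson} respectively. Consequently, Theorem \ref{thm: convergence_theorem} delivers unique solvability of the hybrid Wilson FVM equation together with the estimate
\begin{equation*}
\|u-u_\mathcal{T}\|_{1,\mathcal{T}}\le C\Bigl(\inf_{w\in\mathbb{U}_\mathcal{T}}\bigl(\|u-w\|_{0,\Omega}+\|u-w\|_{1,\mathcal{T}}+h|u-w|_{2,\mathcal{T}}\bigr)+\sup_{v\in\mathbb{U}_\mathcal{T}}\frac{E_\mathcal{T}(u,v)}{|v|_{1,\mathcal{T}}}\Bigr).
\end{equation*}

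To control the infimum, I would choose the natural trial candidate $w:=P_\mathcal{T}u$, the Wilson interpolant defined above Lemma \ref{interpolation_error}. The first two pieces, $\|u-P_\mathcal{T}u\|_{0,\Omega}\le Ch^2|u|_2$ and $\|u-P_\mathcal{T}u\|_{1,\mathcal{T}}\le Ch|u|_2$, are immediate from Lemma \ref{interpolation_error}. For the less standard term $h|u-P_\mathcal{T}u|_{2,\mathcal{T}}$, I would split it via the triangle inequality and observe that, by the definition of $\hat{\eta}_{4+j}$ in (\ref{freedomfucntional_wil}) together with a scaling argument through the reference element $\hat{K}$, the local Wilson interpolation is bounded in the $H^2$ seminorm, so that $|P_Ku|_{2,K}\le C|u|_{2,K}$ on every $K\in\mathcal{T}$. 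Summing gives $h|u-P_\mathcal{T}u|_{2,\mathcal{T}}\le Ch|u|_2$, so the whole infimum term is $O(h)|u|_2$.

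The nonconforming error term is treated directly by Proposition \ref{E_wilson}, which yields $|E_\mathcal{T}(u,v)|\le ch|u|_2\|v\|_{1,\mathcal{T}}$ for every $v\in\mathbb{U}_\mathcal{T}$. Since $\|\cdot\|_{1,\mathcal{T}}$ and $|\cdot|_{1,\mathcal{T}}$ coincide on the Wilson trial space $\mathbb{U}_\mathcal{T}$, dividing by $|v|_{1,\mathcal{T}}$ and taking the supremum shows that the nonconforming contribution is also bounded by $Ch|u|_2$. Adding the two estimates and absorbing constants produces the desired inequality $\|u-u_\mathcal{T}\|_{1,\mathcal{T}}\le Ch|u|_2$.

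The only step requiring any care is the bound on $h|u-P_\mathcal{T}u|_{2,\mathcal{T}}$, since, unlike the $C\textrm{-}R$ case, the Wilson interpolant carries nonvanishing second derivatives through the bubbles $\hat{\phi}_5,\hat{\phi}_6$; this is routinely resolved by the $H^2$-stability of $\hat{P}$ on the reference rectangle, transported to $K$ by the affine map $\mathcal{F}_K$. Everything else is a mechanical combination of the propositions and lemmas already established in this section.
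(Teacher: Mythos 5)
Your proposal is correct and follows essentially the same route as the paper: invoke Theorem \ref{thm: convergence_theorem} with its hypotheses supplied by Propositions \ref{norm_eqi_wilson} and \ref{L2testspace_wilson}, bound the nonconforming term by Proposition \ref{E_wilson}, and bound the approximation term by taking $w=P_\mathcal{T}u$ and using Lemma \ref{interpolation_error}. Your extra care with the $h|u-P_\mathcal{T}u|_{2,\mathcal{T}}$ term (via $H^2$-stability of the local interpolant on the reference rectangle) is a welcome detail that the paper's two-line proof leaves implicit, but it does not change the argument.
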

\begin{proof}
From Theorem \ref{thm: convergence_theorem}, Propositions \ref{norm_eqi_wilson}, \ref{L2testspace_wilson} and \ref{E_wilson}, we derive that
$$
\|u-u_\mathcal{T}\|_{1,\mathcal {T}}\le C \left(\inf_{w\in \mathbb{U}_\mathcal{T}}\left(\|u-w\|_{0,\Omega}+\|u-w\|_{1,\mathcal {T}} + h|u-w|_{2,\mathcal {T}}\right) +  h |u|_{2}\right).
$$
This combined with the interpolation error estimate presented in Lemma \ref{interpolation_error} yields the desired result of this theorem.
\end{proof}

It can be seen from Theorem \ref{thm:WilsonConvergence} that the hybrid Wilson FVM enjoys the same order of error estimate as that of the Wilson FEM (\cite{LeZ, Sh}).
We have seen in Theorem \ref{thm:WilsonConvergence} that the uniform
ellipticity of $\mathscr{A}_\mathscr{T}$ is crucial to
obtain the error estimate of the hybrid Wilson FVM.
The rest of this section is devoted to establishing the uniform ellipticity of $\mathscr{A}_\mathscr{T}$ for the case that the matrix $\mathbf{a}$  in (\ref{eq:poisson_equation}) is
chosen as the identity matrix and $b=0$.
In order to prove the uniform ellipticity inequality (\ref{uniform_elliptic}), it suffices to verify that there exists a constant $\sigma>0$ such that for all $\mathcal {T}\in \mathscr{T}$ and the associated $\mathcal {T}^*$ and all $K \in
\mathcal{T}$,
\begin{equation}\label{local_elliptic}
a_{K}(w, \Pi_{\mathcal{T}^{*}}w)\ge \sigma
|w|^2_{1,K}, \quad  \mbox{for all} \
w\in \mathbb{U}_\mathcal {T}
\end{equation}

For each $K\in \mathcal {T}$, we define a discrete semi-norm for $\mathbb{U}_{\mathcal{T}}$ restricted on $K$.
According to the FE theory (cf. \cite{Ciarlet}),
for each $K\in \mathcal {T}$, corresponding to the FE triple element $(\hat{K}, \hat{\Sigma}, \mathbb{U}_{\hat{K}})$ on the reference triangle $\hat{K}$, there is a FE triple element $(K, {\Sigma}_K, \mathbb{U}_{K})$ on $K$.
Note that the set of degrees of freedom
$
{\Sigma}_K:=\{{\eta}_{i,K}: i\in \mathbb{N}_{6}\}
$
are the functionals corresponding to $\hat{\eta}_i$ in the sense that for all ${w}\in \mathbb{U}_{\mathcal{T}}$,
$
    {\eta}_{i,K}(w) = \hat{\eta}_i(\hat{w}).
$
For each $w\in
\mathbb{U}_{\mathcal{T}}$ and $K\in \mathcal {T}$, we
let
\begin{equation}\label{coef_on_K}
    w_{i,K}:= {\eta}_{i,K}(w),\ i\in \mathbb{N}_{6}, \quad
\bar{{w}}_K:= \frac{1}{4}\sum_{i\in \mathbb{N}_{4}}
{w}_{i,K}\quad  \mbox{and} \quad \bar{w}_{i,K}:={w}_{i,K}-\bar{{w}}_K,\ i\in \mathbb{N}_{4}.
\end{equation}
Define
$$
|w|_{1,\mathbb{U}_{\mathcal{T}},K}: = \isqrt{
\sum_{i=1}^4\bar{w}_{i,K}^2 +
{w}_{5,K}^2 + {w}_{6,K}^2}.
$$
Similar to the proof of Lemma 3.3 of \cite{CXZ}, we derive that if $\mathscr{T}$ is regular, there exist positive constants $c_1$ and $c_2$ such that for all $\mathcal{T}\in \mathscr{T}$ and all $K\in \mathcal{T}$,
\begin{equation}\label{eqnorm2_mid2}
  c_1  |w|_{1,\mathbb{U}_{\mathcal{T}},K}  \leq  |w|_{1,K}  \leq c_2 |w|_{1,\mathbb{U}_{\mathcal{T}},K},
\end{equation}

We reexpress (\ref{local_elliptic}) in an equivalent matrix form.
To this end, for each $K\in \mathcal {T}$,
we define
\begin{equation}\label{Wilson-trial basis on K}
\phi_{i,K}:=\hat{\phi}_i \circ \mathcal{F}_K^{-1}, \   \psi_{i,K}:=\hat{\psi}_i \circ \mathcal{F}_K^{-1},\quad  i\in \mathbb{N}_{6}
\end{equation}
and
$$
\mathbf{A}_K := \left[a_{K}\left(\phi_{i,K},
\psi_{j,K}\right):i,j\in \mathbb{N}_{6}\right], \quad
\tilde{\mathbf{A}}_K:=
(\mathbf{A}_K + \mathbf{A}_K^T)/2.
$$
The matrix $\tilde{\mathbf{A}}_K$ is the symmetrization of the element stiffness matrix $\mathbf{A}_K$.
Note that for each $w\in \mathbb{U}_\mathcal{T}$ and each $K\in \mathcal {T}$,
\begin{equation}\label{w_express}
    w(x)=\sum_{i\in
\mathbb{N}_{6}} w_{i,K} \phi_{i,K}(x)\quad \text{and} \quad  \Pi_{\mathcal{T}^{*}}w(x)=\sum_{i\in
\mathbb{N}_{6}} w_{i,K} \psi_{i,K}(x),  \quad \forall x\in K,
\end{equation}
where $w_{i,K}, i\in \mathbb{N}_{6}$ are as defined in (\ref{coef_on_K}).
For each $w\in \mathbb{U}_\mathcal{T}$ and each $K\in \mathcal {T}$, we let
$$
    \mathbf{w}_K := \left[{w}_{i,K}: i\in
\mathbb{N}_{6}\right]^T.
$$
We define a matrix of rank 1 by setting
$$
{\mathbf{e}}:=\left[1,1,1,1,0,0\right]^T \quad \text{and} \quad   \mathbf{E} := \frac{1}{{\mathbf{e}}^T
{\mathbf{e}}} {\mathbf{e}} {\mathbf{e}}^T.
$$
Note that the rank of $\mathbf{E}$ is one and $\mathbf{e}$ is an eigenvector of $\mathbf{E}$ associated with the eigenvalue 1.
Furthermore, note that for each $w\in \mathbb{U}_\mathcal{T}$
\begin{equation}\label{lm:ule_matrix_form_mid2}
    |w|^2_{1,\mathbb{U}_{\mathcal{T}},K} = (\mathbf{w}_K-\mathbf{E}\mathbf{w}_K)^T(\mathbf{w}_K-\mathbf{E}\mathbf{w}_K).
\end{equation}
From (\ref{eqnorm2_mid2}) and (\ref{lm:ule_matrix_form_mid2}), we obtain the following result as a lemma.

\begin{lemma}\label{lm:ule_matrix_formWilson}
If $\mathscr{T}$ is regular, then (\ref{local_elliptic}) is equivalent to the existence of a positive constant $\sigma$ such that for all $\mathcal{T}\in \mathscr{T}$, all $K\in\mathcal{T}$ and all $w\in \mathbb{U}_\mathcal{T}$,
\begin{equation}\label{K-elliptic1}
\mathbf{w}_K^T\tilde{\mathbf{A}}_K \mathbf{w}_K\ge \sigma
(\mathbf{w}_K-\mathbf{E}\mathbf{w}_K)^T(\mathbf{w}_K-\mathbf{E}\mathbf{w}_K).
\end{equation}
\end{lemma}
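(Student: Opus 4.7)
The plan is to recognize that this lemma is an algebraic recoding statement: both sides of (\ref{local_elliptic}) can be rewritten as quadratic forms in the coefficient vector $\mathbf{w}_K$, after which the norm equivalence (\ref{eqnorm2_mid2}) makes the two inequalities equivalent up to multiplicative constants.

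First I would handle the left-hand side of (\ref{local_elliptic}). Substituting the representations in (\ref{w_express}) into $a_K(w,\Pi_{\mathcal{T}^{*}}w)$ and using bilinearity gives
$$
a_K(w,\Pi_{\mathcal{T}^{*}}w)
= \sum_{i,j\in\mathbb{N}_{6}} w_{i,K} w_{j,K}\, a_K(\phi_{i,K},\psi_{j,K})
= \mathbf{w}_K^T \mathbf{A}_K \mathbf{w}_K.
$$
Because this scalar equals its own transpose, averaging with $\mathbf{w}_K^T\mathbf{A}_K^T\mathbf{w}_K$ yields $a_K(w,\Pi_{\mathcal{T}^{*}}w) = \mathbf{w}_K^T \tilde{\mathbf{A}}_K \mathbf{w}_K$. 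The symmetrization is forced on us here because the trial and test basis functions $\phi_{i,K}$, $\psi_{j,K}$ are distinct, so $\mathbf{A}_K$ need not be symmetric, but the quadratic form in $\mathbf{w}_K$ is insensitive to this.

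Next I would treat the right-hand side. From (\ref{coef_on_K}) and the definition $\mathbf{E} = \mathbf{e}\mathbf{e}^T/(\mathbf{e}^T\mathbf{e})$ with $\mathbf{e}=[1,1,1,1,0,0]^T$, a direct calculation shows $\mathbf{E}\mathbf{w}_K = \bar{w}_K\,\mathbf{e}$, so the components of $\mathbf{w}_K-\mathbf{E}\mathbf{w}_K$ are precisely $\bar{w}_{i,K}$ for $i\in\mathbb{N}_4$ and $w_{5,K}, w_{6,K}$. Hence
$$
(\mathbf{w}_K-\mathbf{E}\mathbf{w}_K)^T(\mathbf{w}_K-\mathbf{E}\mathbf{w}_K) = \sum_{i=1}^{4}\bar{w}_{i,K}^2 + w_{5,K}^2 + w_{6,K}^2 = |w|^2_{1,\mathbb{U}_{\mathcal{T}},K},
$$
which is exactly (\ref{lm:ule_matrix_form_mid2}).

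Finally, combining these two identifications with the norm equivalence (\ref{eqnorm2_mid2}), the inequality (\ref{local_elliptic}) with constant $\sigma_0$ becomes
$$
\mathbf{w}_K^T \tilde{\mathbf{A}}_K \mathbf{w}_K \ge \sigma_0 |w|^2_{1,K} \ge \sigma_0 c_1^2 |w|^2_{1,\mathbb{U}_{\mathcal{T}},K} = \sigma_0 c_1^2 (\mathbf{w}_K-\mathbf{E}\mathbf{w}_K)^T(\mathbf{w}_K-\mathbf{E}\mathbf{w}_K),
$$
so (\ref{K-elliptic1}) holds with $\sigma=\sigma_0 c_1^2$; conversely (\ref{K-elliptic1}) with constant $\sigma_1$ gives (\ref{local_elliptic}) with $\sigma=\sigma_1 c_2^{-2}$ via the upper bound in (\ref{eqnorm2_mid2}). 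Since the constants $c_1,c_2$ in (\ref{eqnorm2_mid2}) are independent of $\mathcal{T}$ and $K$, uniformity of $\sigma$ is preserved in both directions. There is no serious obstacle here; the only point requiring care is the symmetrization step described above, and a clean verification that the rank-one projection $\mathbf{E}$ acts as averaging over the four vertex degrees of freedom while annihilating the two nonconforming ones.
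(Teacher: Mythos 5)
Your proposal is correct and follows exactly the route the paper intends: the paper obtains this lemma in one line "from (\ref{eqnorm2_mid2}) and (\ref{lm:ule_matrix_form_mid2})," and your argument simply fills in the details — the identity $a_K(w,\Pi_{\mathcal{T}^{*}}w)=\mathbf{w}_K^T\tilde{\mathbf{A}}_K\mathbf{w}_K$ from (\ref{w_express}) with symmetrization, the identification of $(\mathbf{w}_K-\mathbf{E}\mathbf{w}_K)^T(\mathbf{w}_K-\mathbf{E}\mathbf{w}_K)$ with $|w|^2_{1,\mathbb{U}_{\mathcal{T}},K}$, and the two-sided use of the mesh-independent constants $c_1,c_2$. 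No gaps.
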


We next express the element stiffness matrices $\mathbf{A}_K$ for all $K\in\mathcal {T}$ in terms of two matrices on the reference rectangle $\hat K$.
For ${\phi}\in \mathbb{U}_{\hat{K}}$, ${\psi}\in \mathbb{V}_{\hat{\mathcal{T}}^*}$ and $x:=(x_1,x_2)\in \hat{K}$,
let
\[
{a}_{1}({\phi}, {\psi})  :=  \sum_{\hat{K}^*\in \hat{\mathcal{T}}^*} \left(
\int_{\hat{K}^*}\frac{\partial {\phi(x)}}{\partial {x}_1} \frac{\partial {\psi}(x)}{\partial {x}_1}dx_1 dx_2
- \int_{\partial \hat{K}^* \cap \mathrm{int}\hat{K}} {\psi(x)} \frac{\partial{\phi(x)}}{\partial {x}_1}dx_2 \right),
\]
and
$$
{a}_{2}({\phi}, {\psi})  :=  \sum_{\hat{K}^*\in \hat{\mathcal{T}}^*} \left(
\int_{\hat{K}^*}\frac{\partial {\phi(x)}}{\partial {x}_2} \frac{\partial {\psi}(x)}{\partial {x}_2}dx_1 dx_2
- \int_{\partial \hat{K}^* \cap \mathrm{int}\hat{K}} {\psi(x)} \frac{\partial{\phi(x)}}{\partial {x}_2}dx_1 \right).
$$
For the basis $\Phi_{\hat{K}}= \{\hat{\phi}_i: i\in \mathbb{N}_{6}
\}$ of $\mathbb{U}_{\hat{K}}$ and the basis $\Psi_{\hat{\mathcal {T}}^*}=\{\hat{\psi}_j: j\in
\mathbb{N}_{6 }\}$ of $\mathbb{V}_{\hat{\mathcal{T}}^*}$, let
$$
{\mathbf{A}}_{i} :=
\left[{a}_{i}\left(\hat{\phi}_l,\hat{\psi}_m\right):l,m\in \mathbb{N}_{6}\right].
$$
For each $K\in \mathcal {T}$, we use $2h_{1,K}$ and $2h_{2,K}$ to denote the lengths of the edges parallel to the $x$-axis and the $y$-axis respectively and define the shape parameter of $K$
\begin{equation}\label{ratio}
r_K:= h_{2,K}/h_{1,K}.
\end{equation}
Obviously, the regularity condition (\ref{regularity}) of the family $\mathscr{T}$ of the rectangle partitions is equivalent to that there exist positive constants $\lambda_1$ and $\lambda_2$ such that for all $\mathcal {T}\in \mathscr{T}$ and all $K\in \mathcal {T}$
\begin{equation}\label{regular_inequlatywilson}
 \lambda_1  \leq  r_K \leq \lambda_2.
\end{equation}
We introduce a matrix
$$
\mathbf{M}_{K} := \left[
\begin{array}{cc}r_{K} & 0 \\ 0 &
1/r_{K}\end{array}\right].
$$

\begin{lemma}
\label{lm:matrix_parameterizationWilson} For each rectangle element $K$,
\[
\mathbf{A}_K = r_K {\mathbf{A}}_{1} + (1/r_{K}) {\mathbf{A}}_{2} .
\]
\end{lemma}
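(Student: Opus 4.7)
The plan is to exploit the product structure of the Poisson form together with the diagonal nature of the affine map from $\hat{K}$ to the axis-aligned rectangle $K$. Since here $\mathbf{a}=I$ and $b=0$, the bilinear form reduces to
\[
a_K(w,v)=\sum_{K^*\in\mathcal{T}^*}\left\{\int_{K^*\cap K}\nabla w\cdot\nabla v-\int_{\partial K^*\cap\mathrm{int}K}v\,\nabla w\cdot\mathbf{n}\,ds\right\}.
\]
I would first split $\nabla w\cdot\nabla v=\partial_1 w\,\partial_1 v+\partial_2 w\,\partial_2 v$ and observe that $\partial K^*\cap\mathrm{int}K$ consists of four coordinate-aligned segments joining the centre of $K$ to the midpoints of its sides; on segments parallel to the $x_2$-axis only $\partial_1 w$ enters the flux (since $\mathbf{n}$ is horizontal there), while on segments parallel to the $x_1$-axis only $\partial_2 w$ enters. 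This yields a clean decomposition $a_K=a_{K,1}+a_{K,2}$, where $a_{K,j}$ is built exclusively from $\partial_j$-derivatives and the corresponding class of dual segments, matching termwise the structure of the reference forms $a_1$ and $a_2$.

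Next I would pull everything back to $\hat{K}$ via $\mathcal{F}_K(\hat{x}_1,\hat{x}_2)=(c_1+h_{1,K}\hat{x}_1,c_2+h_{2,K}\hat{x}_2)$, whose Jacobian is $h_{1,K}h_{2,K}$ and which satisfies $\partial_{x_j}w=(1/h_{j,K})(\partial_{\hat{x}_j}\hat{w})\circ\mathcal{F}_K^{-1}$. A short direct computation then gives a common scaling factor for each part: the area integral $\int_{K^*\cap K}\partial_1 w\,\partial_1 v\,dx$ pulls back with factor $h_{1,K}h_{2,K}/h_{1,K}^2=r_K$, and each vertical line integral picks up the same factor since the parameterization contributes $dx_2=h_{2,K}\,d\hat{x}_2$ which cancels one power of $h_{1,K}$ coming from $\partial_1 w$; by symmetry, every term of $a_{K,2}$ carries the factor $1/r_K$. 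Comparing with the definitions of $a_1$ and $a_2$ on $\hat{K}$, which use precisely $dx_2$ on the $\partial_1$-segments and $dx_1$ on the $\partial_2$-segments, I obtain $a_{K,1}(w,v)=r_K\,a_1(\hat{w},\hat{v})$ and $a_{K,2}(w,v)=(1/r_K)\,a_2(\hat{w},\hat{v})$ for all admissible $w$ and $v$.

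To finish, I would specialise to the local basis: by (\ref{Wilson-trial basis on K}), $\widehat{\phi_{i,K}}=\hat{\phi}_i$ and $\widehat{\psi_{j,K}}=\hat{\psi}_j$, so reading off entry $(i,j)$ of both sides of the identity yields $[\mathbf{A}_K]_{ij}=r_K[\mathbf{A}_1]_{ij}+(1/r_K)[\mathbf{A}_2]_{ij}$, which is exactly the claim. The only step requiring real care is the sign bookkeeping on the dual skeleton: the reference forms $a_1$ and $a_2$ encode the outward normal implicitly through the signed differentials $dx_2$ and $dx_1$, so one must verify that the orientation of $\partial K^*$ is preserved by $\mathcal{F}_K$. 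This is automatic since the map is diagonal with positive diagonal entries $h_{1,K},h_{2,K}$, and once that observation is in place the proof is just a tracking of Jacobian factors as above.
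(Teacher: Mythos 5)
Your proposal is correct and follows essentially the same route as the paper: pull the element form back to $\hat{K}$ through the diagonal affine map and track the Jacobian factors, which produce exactly the diagonal matrix $\mathbf{M}_K=\mathrm{diag}(r_K,1/r_K)$ the paper inserts in (\ref{ak}) before reading off $\mathbf{A}_K=r_K\mathbf{A}_1+(1/r_K)\mathbf{A}_2$. Your explicit termwise split of the area and flux integrals by derivative direction, together with the orientation remark, is just an unpacked version of that same computation.
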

\begin{proof}
Recall for $\phi_{i,K}$ and $\psi_{j,K}$ defined as in (\ref{Wilson-trial basis on K}) that
\[
 a_K \left(\phi_{i,K}, \psi_{j,K}\right)
=  \sum_{K^*\in\mathcal{T}^*} \left\{ \int_{K^*\cap K} \nabla \phi_{i,K}
\cdot \nabla \psi_{j,K} d{x} - \int_{\partial K^* \cap K} \psi_{j,K} \nabla \phi_{i,K}
\cdot \mathbf{n} \ ds \right\}.
\]
Using the affine mapping between the reference rectangle $\hat{K}$ and $K$, we derive
\begin{equation}\label{ak}
 a_K \left(\phi_{i,K}, \psi_{j,K}\right) =
\sum_{K^*\in\mathcal{T}^*} \left(
 \int_{\hat{K} \cap \hat{K}^*} (\nabla \hat{\phi}_{i})^T
\mathbf{M}_{K} \nabla \hat{\psi}_{j} \ d\hat{x}  - \int_{
\partial \hat{K}^* \cap
\hat{K}} \hat{\psi}_{j} (\nabla \hat{\phi}_{i})^T \mathbf{M}_{K} \hat{\bf n}
\ d\hat{s} \right).
\end{equation}
Substituting the definition of $\mathbf{M}_{K}$ into (\ref{ak}), we
obtain the desired result of this lemma.
\end{proof}

Let
$$
\tilde{\mathbf{A}}_i:= (\mathbf{A}_i + \mathbf{A}_i^T)/2, \ i=1,2.
$$
For each $r\in \mathbb{R}$, we introduce a matrix
$$
\mathbf{H}\left(r \right) := r \tilde{\mathbf{A}}_{1} + (1/r) \tilde{\mathbf{A}}_{2} + {\mathbf{E}}.
$$
From Lemma \ref{lm:matrix_parameterizationWilson}, we learn that for each $K\in\mathcal{T}$
$$
\mathbf{H}\left(r_K \right) :=  \tilde{\mathbf{A}}_{K}  + {\mathbf{E}}.
$$
The next lemma presents a sufficient condition for (\ref{K-elliptic1}) by making use of $\mathbf{H}\left(r_K \right)$

\begin{lemma}\label{Wilson eigenvalue}
If there exists a positive constant $c$ such that for all $\mathcal{T}\in \mathscr{T}$ and all $K\in\mathcal{T}$
\begin{equation}\label{Wilson eigenvalue mid1}
    \lambda_{\min}(\mathbf{H}\left(r_{K}\right))\geq c,
\end{equation}
then (\ref{K-elliptic1}) holds.
\end{lemma}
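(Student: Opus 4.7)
The plan is to rewrite the desired inequality \eqref{K-elliptic1} using the identity $\tilde{\mathbf{A}}_K = \mathbf{H}(r_K) - \mathbf{E}$ noted just before the lemma statement, and then exploit the fact that $\mathbf{e}$ lies in the kernel of $\tilde{\mathbf{A}}_K$. Since $\mathbf{E}$ is the orthogonal projection onto $\mathrm{span}(\mathbf{e})$ (it is symmetric and satisfies $\mathbf{E}^2=\mathbf{E}$), both $\tilde{\mathbf{A}}_K$ and $\mathbf{H}(r_K)$ are block-diagonal with respect to $\mathbb{R}^6 = \mathrm{span}(\mathbf{e}) \oplus \mathbf{e}^{\perp}$ once we know $\tilde{\mathbf{A}}_K\mathbf{e}=0$, and the hypothesis on $\lambda_{\min}(\mathbf{H}(r_K))$ transfers directly to a lower bound on $\tilde{\mathbf{A}}_K$ restricted to $\mathbf{e}^{\perp}$.

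The first step would be to verify the two partition-of-unity identities underlying $\tilde{\mathbf{A}}_K\mathbf{e}=0$. Summing the explicit formulas for $\hat{\phi}_1,\ldots,\hat{\phi}_4$ on the reference rectangle gives $\sum_{i=1}^{4}\hat{\phi}_i \equiv 1$, hence $\sum_{i=1}^{4}\phi_{i,K}\equiv 1$ on $K$; since $\mathbf{a}=\mathbf{I}$, $b=0$, and $\nabla 1 = 0$, this yields $a_K\bigl(\sum_{i=1}^{4}\phi_{i,K},\psi_{j,K}\bigr)=0$ for each $j$, i.e.\ $\mathbf{A}_K\mathbf{e}=0$. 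Dually, $\sum_{j=1}^{4}\psi_{j,K}=\chi_K$, and $a_K(\phi_{i,K},\chi_K) = -\sum_{K^*}\int_{\partial K^*\cap \mathrm{int}\,K}(\nabla\phi_{i,K})\cdot\mathbf{n}$; the interior dual interfaces inside $K$ each occur in exactly two adjacent control volumes with opposite outward normals, so the sum telescopes to zero, giving $\mathbf{e}^{T}\mathbf{A}_K = 0$. Averaging then produces $\tilde{\mathbf{A}}_K\mathbf{e}=0$.

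Once this is in hand, the remainder is essentially mechanical. Because $\tilde{\mathbf{A}}_K$ is symmetric and kills $\mathbf{e}$, it maps $\mathbf{e}^{\perp}$ into itself, and the direct sum decomposition yields $\mathbf{H}(r_K)\mathbf{e}=\mathbf{e}$ together with $\mathbf{H}(r_K)|_{\mathbf{e}^\perp}=\tilde{\mathbf{A}}_K|_{\mathbf{e}^\perp}$. Therefore the assumption $\lambda_{\min}(\mathbf{H}(r_K))\ge c$ forces $\lambda_{\min}\bigl(\tilde{\mathbf{A}}_K|_{\mathbf{e}^\perp}\bigr)\ge c$. For an arbitrary $\mathbf{w}\in\mathbb{R}^{6}$, writing $\mathbf{w}=\mathbf{E}\mathbf{w}+(\mathbf{I}-\mathbf{E})\mathbf{w}$ and using $\tilde{\mathbf{A}}_K\mathbf{e}=0$ together with symmetry gives
$$
\mathbf{w}^{T}\tilde{\mathbf{A}}_K\mathbf{w} \;=\; \bigl((\mathbf{I}-\mathbf{E})\mathbf{w}\bigr)^{T}\tilde{\mathbf{A}}_K\bigl((\mathbf{I}-\mathbf{E})\mathbf{w}\bigr) \;\ge\; c\,\bigl\|(\mathbf{I}-\mathbf{E})\mathbf{w}\bigr\|^{2} \;=\; c\,(\mathbf{w}-\mathbf{E}\mathbf{w})^{T}(\mathbf{w}-\mathbf{E}\mathbf{w}),
$$
which is exactly \eqref{K-elliptic1} with $\sigma=c$.

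The one point requiring genuine care is the verification of $\mathbf{e}^{T}\mathbf{A}_K=0$, since the bilinear form $a_K$ is nonsymmetric in its treatment of trial versus test functions and the cancellation relies on the geometric fact that every interior dual segment inside $K$ is traversed twice with opposite normals when we sum over the four control volumes; the remainder of the argument is just linear algebra, so I expect this to be the only nontrivial ingredient.
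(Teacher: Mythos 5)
Your proof is correct, and it reaches the conclusion by a direct argument where the paper argues by contradiction. Both proofs rest on the same two ingredients: the identity $\mathbf{H}(r_K)=\tilde{\mathbf{A}}_K+\mathbf{E}$ and the fact that $\tilde{\mathbf{A}}_K\mathbf{e}=0$, which the paper establishes essentially as you do (partition of unity $\sum_{i\le 4}\hat{\phi}_i\equiv 1$ on the trial side, $\sum_{j\le 4}\hat{\psi}_j=\chi_{\hat K}$ together with the telescoping of the dual-interface integrals on the test side), except that it works with the reference matrices $\mathbf{A}_1,\mathbf{A}_2$ and merely asserts $a_i(\hat{\phi}_k,\chi_{\hat K})=a_i(\chi_{\hat K},\hat{\psi}_k)=0$ without the detail you supply. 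After that, the paper supposes (\ref{K-elliptic1}) fails for every $\sigma$, splits $\mathbf{w}_K$ along the eigenspaces of $\mathbf{E}$, and extracts a vector $\mathbf{w}_{2,K_0}$ with $\mathbf{w}_{2,K_0}^T\tilde{\mathbf{A}}_{K_0}\mathbf{w}_{2,K_0}\le 0$, contradicting (\ref{Wilson eigenvalue mid1}); the extraction step (``since $\sigma$ can be sufficiently small\dots'') is the least transparent point of the published proof, since the element $K_0$ a priori varies with $\sigma$. Your direct computation $\mathbf{w}^{T}\tilde{\mathbf{A}}_K\mathbf{w}=\bigl((\mathbf{I}-\mathbf{E})\mathbf{w}\bigr)^{T}\tilde{\mathbf{A}}_K\bigl((\mathbf{I}-\mathbf{E})\mathbf{w}\bigr)\ge c\,\bigl\|(\mathbf{I}-\mathbf{E})\mathbf{w}\bigr\|^{2}$ sidesteps this entirely and yields the explicit constant $\sigma=c$, so it is the cleaner of the two routes. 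One cosmetic slip: with the convention $\mathbf{A}_K=[a_K(\phi_{i,K},\psi_{j,K})]$ (row index $i$ for trial, column index $j$ for test), the partition of unity of the $\phi$'s gives $\mathbf{e}^{T}\mathbf{A}_K=0$ and the test-side identity gives $\mathbf{A}_K\mathbf{e}=0$ --- you have the two labels interchanged --- but since you prove both identities and only their average enters $\tilde{\mathbf{A}}_K\mathbf{e}=0$, nothing in the argument is affected.
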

\begin{proof}
If (\ref{K-elliptic1}) does not hold, then for any $\sigma >0$, there exist a $\mathcal{T}\in \mathscr{T}$, a $K\in\mathcal{T}$ and a $w\in \mathbb{U}_\mathcal{T}$ such that
\begin{equation}\label{Wilson eigenvalue mid2}
    \mathbf{w}_K^T\tilde{\mathbf{A}}_K \mathbf{w}_K< \sigma
(\mathbf{w}_K-\mathbf{E}\mathbf{w}_K)^T(\mathbf{w}_K-\mathbf{E}\mathbf{w}_K).
\end{equation}
Let $V_1:=\mathrm{span} \{\mathbf{e}\}$ and $V_2:=\mathrm{span}\{\mathbf{v}_i, i\in
\mathbb{N}_{5}\}$ where $\mathbf{v}_i, i\in \mathbb{N}_{5}$ are the orthogonal
eigenvectors of $\mathbf{E}$ associated with the eigenvalue 0. Then, $V_1$ is the eigen-space of $\mathbf{E}$ associated with the eigenvalue 1 and $V_2$ is the eigen-space of $\mathbf{E}$ associated with the eigenvalue 0.

We prove that $V_1$ is contained in the null space of $\tilde{\mathbf{A}}_K$. From Lemma \ref{lm:matrix_parameterizationWilson}, we get
\begin{equation}\label{Wilson eigenvalue mid3}
\tilde{\mathbf{A}}_K \mathbf{e} = \frac{r_K}{2}(\mathbf{A}_1 \mathbf{e} + \mathbf{A}_1^T \mathbf{e}) + \frac{1}{2 r_K}(\mathbf{A}_2 \mathbf{e} + \mathbf{A}_2^T \mathbf{e}).
\end{equation}
From the definition of $\mathbf{A}_i, i=1,2$, the $k$th elements of the vectors $\mathbf{A}_i \mathbf{e}$ and $\mathbf{A}_i^T \mathbf{e}$ are as follows
$$
\left(\mathbf{A}_i \mathbf{e}\right)_k = a_i(\hat{\phi}_k, \sum_{j=1}^4 \hat{\psi}_j) = a_i(\hat{\phi}_k, \chi_{\hat{K}})=0, \quad \left(\mathbf{A}_i^T \mathbf{e}\right)_k = a_i(\sum_{j=1}^4 \hat{\phi}_j,\hat{\psi}_k) = a_i(\chi_{\hat{K}},\hat{\psi}_k)=0
$$
Thus
$$
\mathbf{A}_i \mathbf{e} = \mathbf{A}_i^T \mathbf{e} =0, \quad i=1,2.
$$
Substituting the above equations into (\ref{Wilson eigenvalue mid3}) yields that $\tilde{\mathbf{A}}_K \mathbf{e}=0$.

Note that there exist $\mathbf{w}_{1,K}\in V_1$ and $\mathbf{w}_{2,K}\in V_2$ such that $\mathbf{w}_K=\mathbf{w}_{1,K}+\mathbf{w}_{2,K}$. Then, we get that
\begin{equation}\label{Wilson eigenvalue mid4}
    \mathbf{w}_K^T\tilde{\mathbf{A}}_K \mathbf{w}_K = \mathbf{w}_{2,K}^T\tilde{\mathbf{A}}_K \mathbf{w}_{2,K}, \quad (\mathbf{w}_K-\mathbf{E}\mathbf{w}_K)^T(\mathbf{w}_K-\mathbf{E}\mathbf{w}_K) = \mathbf{w}_{2,K}^T \mathbf{w}_{2,K}.
\end{equation}
Substituting (\ref{Wilson eigenvalue mid4}) into (\ref{Wilson eigenvalue mid2}), we get that for any $\sigma >0$, there exist a $\mathcal{T}\in \mathscr{T}$, a $K\in\mathcal{T}$ and a $\mathbf{w}_{2,K}\in V_2$ such that
\begin{equation}\label{lm:relations_between_h_and_ak_midd3}
    \mathbf{w}_{2,K}^T\tilde{\mathbf{A}}_K \mathbf{w}_{2,K} < \sigma \mathbf{w}_{2,K}^T \mathbf{w}_{2,K}.
\end{equation}
Since $\sigma$ can be sufficiently small, from (\ref{lm:relations_between_h_and_ak_midd3}), we derive that there exists a $\mathbf{w}_{2,K_0}\in V_2$ such that $\mathbf{w}_{2,K_0}^T\tilde{\mathbf{A}}_K \mathbf{w}_{2,K_0} \leq 0$. Hence,
$$
\mathbf{w}_{2,K_0}^T\mathbf{H}(r_{K_0})\mathbf{w}_{2,K_0} =  \mathbf{w}_{2,K_0}^T\tilde{\mathbf{A}}_{K_0} \mathbf{w}_{2,K_0} \leq 0.
$$
This contradicts (\ref{Wilson eigenvalue mid1}). Therefore, we conclude that (\ref{K-elliptic1}) holds.
\end{proof}

Now we are read to establish the uniform ellipticity of the family of the discrete bilinear forms for the hybrid Wilson FVM.
\begin{theorem}\label{Wilson eigenvaluepositive}
If $\mathscr{T}$ is regular, then $\mathscr{A}_\mathscr{T}$ is uniformly elliptic.
\end{theorem}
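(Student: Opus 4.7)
The plan is to apply Lemma \ref{Wilson eigenvalue} and produce a uniform constant $c > 0$ with $\lambda_{\min}(\mathbf{H}(r_K)) \geq c$ for every $K \in \mathcal{T}$ and every $\mathcal{T} \in \mathscr{T}$. Since $r \mapsto \mathbf{H}(r)$ is continuous on $(0,\infty)$ and the smallest eigenvalue of a continuous symmetric matrix family is itself a continuous scalar function of $r$, it suffices to prove that $\mathbf{H}(r)$ is positive definite for every single $r > 0$: regularity of $\mathscr{T}$, in the form (\ref{regular_inequlatywilson}), forces $r_K \in [\lambda_1, \lambda_2]$, a compact subset of $(0, \infty)$, so a uniform positive lower bound is then obtained by taking the minimum of $\lambda_{\min}(\mathbf{H}(\cdot))$ over this compact set.

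Next I would reuse the decomposition $\mathbb{R}^6 = V_1 \oplus V_2$ from the proof of Lemma \ref{Wilson eigenvalue}, where $V_1 = \mathrm{span}\{\mathbf{e}\}$ and $V_2$ is its orthogonal complement. That proof already established $\tilde{\mathbf{A}}_i \mathbf{e} = 0$ for $i = 1, 2$. Writing $\mathbf{v} = \alpha \mathbf{e} + \mathbf{v}_2$ with $\mathbf{v}_2 \in V_2$, and using $\mathbf{E}\mathbf{e} = \mathbf{e}$ together with $\mathbf{E}\mathbf{v}_2 = 0$, a direct expansion gives
$$
\mathbf{v}^T \mathbf{H}(r)\mathbf{v} = 4\alpha^2 + \mathbf{v}_2^T \bigl(r \tilde{\mathbf{A}}_1 + r^{-1} \tilde{\mathbf{A}}_2\bigr)\mathbf{v}_2.
$$
Hence proving positive definiteness of $\mathbf{H}(r)$ reduces to showing that $r\tilde{\mathbf{A}}_1 + r^{-1}\tilde{\mathbf{A}}_2$ is strictly positive definite on the $5$-dimensional subspace $V_2$ for every $r > 0$.

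The main obstacle is this last positivity check. For $\mathbf{v}_2 \in V_2$ with associated Wilson trial function $w \in \mathbb{U}_{\hat K}$ satisfying $\hat{\eta}_i(w) = (\mathbf{v}_2)_i$, the quadratic form $\mathbf{v}_2^T(r\tilde{\mathbf{A}}_1 + r^{-1}\tilde{\mathbf{A}}_2)\mathbf{v}_2$ equals the symmetrized hybrid Wilson FVM form $\tfrac{1}{2}\bigl(a_{\hat{K}}(w, \Pi_{\hat{\mathcal{T}}^*} w) + a_{\hat{K}}(\Pi_{\hat{\mathcal{T}}^*} w, w)\bigr)$ on the reference rectangle read with aspect ratio $r$. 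I would verify positivity by exploiting the Wilson splitting (\ref{trial_spa_dec}): the nonconforming coefficients $w_5, w_6$ pair with $\hat\psi_5 = \hat\phi_5$ and $\hat\psi_6 = \hat\phi_6$, yielding a symmetric $2\times 2$ block whose strict positivity follows from closed-form integration of $(\partial_{x_j}\hat\phi_{4+j})^2$ over $\hat K$; the conforming $4\times 4$ block restricted to $V_2$ (vectors whose first four components sum to zero) is handled by adapting the local ellipticity argument behind Lemma \ref{LocalEll1} to the rectangular $Q_1$--piecewise-constant pairing; and the cross terms between the two blocks are absorbed into these positive pieces via Cauchy--Schwarz, with $r$-uniformity guaranteed by the explicit $r$ and $r^{-1}$ factors coming through Lemma \ref{lm:matrix_parameterizationWilson}. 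Once strict positivity on $V_2$ is secured, the compactness argument of the first paragraph yields the uniform bound $c > 0$; Lemma \ref{Wilson eigenvalue} gives (\ref{K-elliptic1}), Lemma \ref{lm:ule_matrix_formWilson} converts it into (\ref{local_elliptic}), and summation over $K \in \mathcal{T}$ delivers (\ref{uniform_elliptic}).
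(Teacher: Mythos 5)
Your reduction is sound and runs parallel to the paper's: both proofs funnel everything through Lemma \ref{Wilson eigenvalue}, use (\ref{regular_inequlatywilson}) to confine $r_K$ to $[\lambda_1,\lambda_2]$, and exploit $\tilde{\mathbf{A}}_i\mathbf{e}=0$. Your continuity-plus-compactness step is a legitimate substitute for the paper's comparison inequality $\mathbf{H}(r_K)\ge \min\{\lambda_1,1/\lambda_2,1\}(\tilde{\mathbf{A}}_1+\tilde{\mathbf{A}}_2+\mathbf{E})$, and your expansion $\mathbf{v}^T\mathbf{H}(r)\mathbf{v}=4\alpha^2+\mathbf{v}_2^T(r\tilde{\mathbf{A}}_1+r^{-1}\tilde{\mathbf{A}}_2)\mathbf{v}_2$ is correct.

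The gap is in the last paragraph, which is where the actual content of the theorem lives. Your plan for proving positivity of $r\tilde{\mathbf{A}}_1+r^{-1}\tilde{\mathbf{A}}_2$ on $V_2$ splits the form into a conforming $4\times 4$ block, a nonconforming $2\times 2$ block, and cross terms to be ``absorbed via Cauchy--Schwarz.'' That absorption only works if the off-diagonal coupling is quantitatively dominated by the two diagonal blocks, and nothing in your argument guarantees this; you would have to compute the coupling entries anyway, at which point the block strategy buys nothing. Moreover, the positivity of the conforming block on zero-sum vectors is not Lemma \ref{LocalEll1} (that lemma is for the C--R pairing on triangles); it is a separate fact about the rectangular $Q_1$--piecewise-constant pairing that you would still have to prove. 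The paper avoids all of this by asserting two concrete, finitely checkable facts about the explicit $6\times 6$ matrices: each $\tilde{\mathbf{A}}_i$ is positive semi-definite of rank $3$, and $\lambda_{\min}(\tilde{\mathbf{A}}_1+\tilde{\mathbf{A}}_2+\mathbf{E})=\frac{1}{12}$, giving $c=\frac{1}{12}\min\{\lambda_1,1/\lambda_2,1\}$. The cleanest repair of your own route is the same: establish positive semi-definiteness of each $\tilde{\mathbf{A}}_i$ individually (a direct computation) and that their common null space meets $V_2$ trivially; then $\mathbf{v}_2^T(r\tilde{\mathbf{A}}_1+r^{-1}\tilde{\mathbf{A}}_2)\mathbf{v}_2$ is a sum of two nonnegative terms that cannot both vanish for $\mathbf{v}_2\ne 0$, positivity for each fixed $r>0$ follows with no cross-term analysis, and your compactness argument then delivers the uniform constant. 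As written, however, the decisive positivity claim is unproved.
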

\begin{proof}
By Lemmas \ref{lm:ule_matrix_formWilson} and \ref{Wilson eigenvalue}, we only need to prove that there exists a positive constant $c$ independent of meshes such that (\ref{Wilson eigenvalue mid1}) holds.

By simple calculation, we derive that the matrices $\tilde{\mathbf{A}}_i, i=1,2$ are semi-definite with rank 3. Since $\mathscr{T}$ is regular, by (\ref{regular_inequlatywilson}), we learn that
\begin{equation}\label{Wilson eigenvaluepositiveMid1}
\mathbf{H}(r_{K})\geq \lambda_1 \tilde{\mathbf{A}}_1 + (1/{\lambda_2}) \tilde{\mathbf{A}}_2 + \mathbf{E} \geq \min\{\lambda_1, 1/{\lambda_2}, 1\}
(\tilde{\mathbf{A}}_1 + \tilde{\mathbf{A}}_2 + \mathbf{E}).
\end{equation}
It can be directly computed that the minimum eigenvalue of the matrix $\tilde{\mathbf{A}}_1 + \tilde{\mathbf{A}}_2 + \mathbf{E}$ is $\frac{1}{12}$.
Therefore, (\ref{Wilson eigenvalue mid1}) holds with $c:=\frac{1}{12} \min\{\lambda_1, 1/{\lambda_2}, 1\}$.
\end{proof}

\section{The $L^2$ Error Estimate of the Hybrid Wilson FVM}

The $L^2$ error estimate of the C-R FVM for solving the Poisson equation was developed in \cite{CP1}.
In this section, we shall establish the $L^2$ error estimate of the hybrid Wilson FVM for solving the Poisson equation.
The result will show that it is enjoys the same optimal convergent rate in $L^2$ norm as that of the Wilson FEM.

We first present two useful lemmas. The next lemma is obtained from (3.13) of \cite{Sh}.

\begin{lemma}\label{con_and_non_trial}
If $\mathscr{T}$ is regular, then there exist positive constants $C_1$ and $C_2$ such that for each $w\in \mathbb{U}_{\mathcal {T}}$ with conforming part $w_1$  and nonconforming part $w_2$ as defined in (\ref{trial_spa_dec})
$$
 |w_1|_{1} \leq C_1 \|w\|_{1,\mathcal {T}}, \quad    |w_2|_{1,\mathcal {T}} \leq C_2 \|w\|_{1,\mathcal {T}}.
$$
\end{lemma}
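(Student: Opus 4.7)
The plan is to obtain this global statement by summing over $K\in\mathcal{T}$ the local inequalities already established in Lemma \ref{lemma_norm_equi1}. The decomposition (\ref{trial_spa_dec}) is defined element by element, so the key observation is that the local inequalities $|w_1|_{1,K}\le C_1 |w|_{1,K}$ and $|w_2|_{1,K}\le C_2 |w|_{1,K}$ hold uniformly in $K$ under the regularity of $\mathscr{T}$; squaring and summing over $K\in\mathcal{T}$ immediately transfers them to the broken seminorms.

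First I would remark that the conforming part $w_1$ is globally continuous on $\bar\Omega$, as noted in the paragraph following (\ref{trial_spa_dec}), so the standard seminorm $|w_1|_1$ coincides with the broken one, namely $|w_1|_1^2 = \sum_{K\in\mathcal{T}}|w_1|_{1,K}^2$. Then, applying the first bound in (\ref{lemma_norm_equi1_mid2}) to each $K$ gives
\begin{equation*}
|w_1|_1^2 \;=\; \sum_{K\in\mathcal{T}} |w_1|_{1,K}^2 \;\le\; C_1^2 \sum_{K\in\mathcal{T}} |w|_{1,K}^2 \;=\; C_1^2\, \|w\|_{1,\mathcal{T}}^2,
\end{equation*}
and taking square roots yields the first inequality of the lemma (with the same constant $C_1$).

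For the nonconforming part, the broken seminorm $|w_2|_{1,\mathcal{T}}$ is by definition $\bigl(\sum_{K\in\mathcal{T}} |w_2|_{1,K}^2\bigr)^{1/2}$, so the second bound in (\ref{lemma_norm_equi1_mid2}) gives directly
\begin{equation*}
|w_2|_{1,\mathcal{T}}^2 \;=\; \sum_{K\in\mathcal{T}}|w_2|_{1,K}^2 \;\le\; C_2^2 \sum_{K\in\mathcal{T}}|w|_{1,K}^2 \;=\; C_2^2\, \|w\|_{1,\mathcal{T}}^2,
\end{equation*}
and taking square roots completes the proof.

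There is no real obstacle here: the work has already been done locally in Lemma \ref{lemma_norm_equi1}, and the only substantive point to verify is the continuity of $w_1$ across interelement boundaries, which is built into the decomposition (\ref{trial_spa_dec}) and justifies writing $|w_1|_1$ rather than $|w_1|_{1,\mathcal{T}}$.
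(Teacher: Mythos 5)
Your proof is correct and is essentially the route the paper intends: the paper attributes both the local version (Lemma \ref{lemma_norm_equi1}) and this global version to (3.13) of \cite{Sh} without writing out the summation, and your argument simply makes explicit the squaring-and-summing over $K\in\mathcal{T}$, together with the observation that the continuity of $w_1$ lets you identify $|w_1|_1$ with the broken seminorm. No gaps.
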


According to (\ref{trial_spa_dec}),
the solution $u_{\mathcal {T}}$ of the hybrid Wilson FVM can be written as the sum
\begin{equation}\label{decompositionFVMsol}
u_{\mathcal {T}}= u_{\mathcal {T},c} + u_{\mathcal {T},n},
\end{equation}
where $u_{\mathcal {T},c}$ is the conforming part and $u_{\mathcal {T},n}$ is the nonconforming part of $u_{\mathcal {T}}$.
For each $K\in \mathcal{T}$,
let
$$\mathbf{F}_K:=\frac{1}{|K|}\left[
\begin{array}{cc}h_{2,K}^2 & 0 \\ 0 &
h_{1,K}^2\end{array}\right].
$$

\begin{lemma}\label{nonconf_part}
If $\mathscr{T}$ is regular,
 then there holds
$$
\|u_{\mathcal {T},n}\|_{1,\mathcal {T}} \leq C h |u|_2 , \quad \|u_{\mathcal {T},n}\|_{0,\Omega} \leq C h^2 |u|_2.
$$
\end{lemma}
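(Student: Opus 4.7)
The plan is to establish the $H^1$-style bound first and then obtain the $L^2$ bound as an immediate consequence via Lemma \ref{lemma_norm_equi2}. To prove $\|u_{\mathcal{T},n}\|_{1,\mathcal{T}} \leq Ch|u|_2$, I would compare $u_\mathcal{T}$ with its interpolant $P_\mathcal{T} u$. Since the decompositions \eqref{trial_spa_dec} and \eqref{projection} are linear,
$$
u_\mathcal{T} - P_\mathcal{T} u = (u_{\mathcal{T},c} - Q_\mathcal{T} u) + (u_{\mathcal{T},n} - R_\mathcal{T} u),
$$
so $u_{\mathcal{T},n} - R_\mathcal{T} u$ is precisely the nonconforming part of the element $u_\mathcal{T} - P_\mathcal{T} u \in \mathbb{U}_\mathcal{T}$.

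Applying Lemma \ref{con_and_non_trial} to $u_\mathcal{T} - P_\mathcal{T} u$, then using the triangle inequality with the FVM convergence rate from Theorem \ref{thm:WilsonConvergence} and the interpolation estimate in Lemma \ref{interpolation_error}, I obtain
$$
|u_{\mathcal{T},n} - R_\mathcal{T} u|_{1,\mathcal{T}} \leq C\|u_\mathcal{T} - P_\mathcal{T} u\|_{1,\mathcal{T}} \leq C\bigl(\|u - u_\mathcal{T}\|_{1,\mathcal{T}} + \|u - P_\mathcal{T} u\|_{1,\mathcal{T}}\bigr) \leq C h |u|_2.
$$
For $R_\mathcal{T} u$ itself, writing $R_\mathcal{T} u = (P_\mathcal{T} u - u) + (u - Q_\mathcal{T} u)$ and invoking Lemma \ref{interpolation_error} twice gives $|R_\mathcal{T} u|_{1,\mathcal{T}} \leq Ch|u|_2$. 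A final triangle inequality yields $\|u_{\mathcal{T},n}\|_{1,\mathcal{T}} = |u_{\mathcal{T},n}|_{1,\mathcal{T}} \leq C h |u|_2$.

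For the $L^2$ estimate, the key observation is that the nonconforming reference basis functions $\hat{\phi}_5 = (1/8)(x_1^2 - 1)$ and $\hat{\phi}_6 = (1/8)(x_2^2 - 1)$ vanish at all four vertices of $\hat{K}$. Consequently, the conforming part of $u_{\mathcal{T},n}$ (which is determined by vertex values) is identically zero, i.e.\ $u_{\mathcal{T},n}$ is \emph{purely} nonconforming. Lemma \ref{lemma_norm_equi2} therefore applies with $w_2 = u_{\mathcal{T},n}$ on every $K$, giving $\|u_{\mathcal{T},n}\|_{0,K} \leq C h_K |u_{\mathcal{T},n}|_{1,K}$. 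Summing over $K \in \mathcal{T}$ and inserting the $H^1$ bound proved above yields $\|u_{\mathcal{T},n}\|_{0,\Omega} \leq Ch|u_{\mathcal{T},n}|_{1,\mathcal{T}} \leq Ch^2|u|_2$, as desired.

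The main obstacle is the $H^1$ bound: the natural estimate $|u_{\mathcal{T},n}|_{1,\mathcal{T}} \leq C\|u_\mathcal{T}\|_{1,\mathcal{T}}$ from Lemma \ref{con_and_non_trial} alone is not sharp enough, since $\|u_\mathcal{T}\|_{1,\mathcal{T}}$ need not vanish with $h$. The trick is to subtract off the interpolation-based proxy $R_\mathcal{T} u$, which is the only quantity whose distance from $u_{\mathcal{T},n}$ inherits the full $O(h)$ rate from both the FVM error and the interpolation error. Once this $H^1$ bound is secured, the $L^2$ bound follows mechanically from the fact that the nonconforming bubble part has a built-in extra factor of $h$ in the $L^2$–$H^1$ comparison.
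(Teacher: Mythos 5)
Your proof is correct and follows essentially the same route as the paper's: both arguments compare $u_{\mathcal{T}}$ with the interpolant $P_{\mathcal{T}}u$, use Lemma \ref{con_and_non_trial} to control one component of the split of $u_{\mathcal{T}}-P_{\mathcal{T}}u$ (you bound its nonconforming part directly, the paper bounds the conforming part $Q_{\mathcal{T}}u-u_{\mathcal{T},c}$ and assembles $u_{\mathcal{T},n}$ by a triangle inequality through $u$ and $Q_{\mathcal{T}}u$), and then invoke Theorem \ref{thm:WilsonConvergence} together with Lemma \ref{interpolation_error}. For the $L^2$ bound the paper redoes the reference-element scaling argument explicitly, whereas you obtain the same elementwise inequality $\|u_{\mathcal{T},n}\|_{0,K}\leq Ch_K|u_{\mathcal{T},n}|_{1,K}$ by applying Lemma \ref{lemma_norm_equi2} to the purely nonconforming function $u_{\mathcal{T},n}$ — a legitimate shortcut to the identical estimate.
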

\begin{proof}
Note that
\begin{equation}\label{nonconf_part_mid1}
\|u_{\mathcal {T},n}\|_{1,\mathcal {T}} \leq \|u_{\mathcal {T}}-u\|_{1,\mathcal {T}} +|u- Q_{\mathcal{T}}u|_1 + |Q_{\mathcal{T}}u- u_{\mathcal {T},c}|_{1}
\end{equation}
where the projection $Q_{\mathcal{T}}$ is defined as in (\ref{projection}).
By Theorems \ref{thm:WilsonConvergence} and \ref{Wilson eigenvaluepositive} and Lemma \ref{interpolation_error}, we get that
\begin{equation}\label{nonconf_part_mid6}
\|u_{\mathcal {T}}-u\|_{1,\mathcal {T}} +|u- Q_{\mathcal{T}}u| \leq Ch|u|_2
\end{equation}
Note that $Q_{\mathcal{T}}u-u_{\mathcal{T},c}$ is the conforming part of $P_{\mathcal{T}}u- u_{\mathcal{T}}$. From Lemma \ref{con_and_non_trial}, we obtain that
\begin{equation}\label{nonconf_part_mid2}
|Q_{\mathcal{T}}u-u_{\mathcal{T},c}|_1 \leq C\|P_{\mathcal{T}}u- u_{\mathcal{T}}\|_{1, \mathcal{T}} \leq C\|P_{\mathcal{T}}u- u\|_{1, \mathcal{T}}+ C\|u- u_{\mathcal{T}}\|_{1, \mathcal{T}}.
\end{equation}
Combining (\ref{nonconf_part_mid2}) with Lemma \ref{interpolation_error} and Theorems \ref{thm:WilsonConvergence} and \ref{Wilson eigenvaluepositive}, we derive that
\begin{equation}\label{nonconf_part_mid3}
|Q_{\mathcal{T}}u-u_{\mathcal{T},c}|_1 \leq Ch |u|_2.
\end{equation}
Substituting (\ref{nonconf_part_mid6}) and (\ref{nonconf_part_mid3}) into (\ref{nonconf_part_mid1}), we derive the first desired inequality.

We next verify the second inequality of this lemma. By the variable transformation, we derive that
$$
\|u_{\mathcal {T},n}\|_{0,\Omega}^2 = \sum\limits_{K\in\mathcal{T}} \int_{K} |u_{\mathcal {T},n}|^2 dx = \sum\limits_{K\in\mathcal{T}} \frac{|K|}{4} \int_{\hat{K}} |\hat{u}_{\mathcal {T},n}|^2 d\hat{x}.
$$
and
$$
\|u_{\mathcal {T},n}\|_{1,\mathcal {T}}^2 = \sum\limits_{K\in\mathcal{T}} \int_{K} |\nabla u_{\mathcal {T},n}|^2 dx = 4\sum\limits_{K\in\mathcal{T}} \int_{\hat{K}} (\nabla \hat{u}_{\mathcal {T},n})^T \mathbf{F}_K \nabla \hat{u}_{\mathcal {T},n} d\hat{x}.
$$
Since $\mathscr{T}$ is regular, $\|u_{\mathcal {T},n}\|_{1,\mathcal {T}}^2$ is equivalent to $\sum\limits_{K\in\mathcal{T}} \int_{\hat{K}} |\nabla \hat{u}_{\mathcal {T},n}|^2 d\hat{x}$. By directly calculation, we easily obtain that
$$
\int_{\hat{K}} |\hat{u}_{\mathcal {T},n}|^2 d\hat{x} \leq C \int_{\hat{K}} |\nabla \hat{u}_{\mathcal {T},n}|^2 d\hat{x}.
$$
Thus, we derive that
\begin{equation}\label{nonconf_part_mid5}
\|u_{\mathcal {T},n}\|_{0,\Omega}^2 \leq  Ch^2 \|u_{\mathcal {T},n}\|_{1,\mathcal {T}}^2.
\end{equation}
The first inequality of this lemma combining with (\ref{nonconf_part_mid5}) immediately yields the second desired inequality.
\end{proof}

Let $u\in \mathbb{H}_0^1(\Omega)\cap \mathbb{H}^2(\Omega)$ be the solution of
(\ref{eq:poisson_equation}). According to the decomposition (\ref{decompositionFVMsol}) and Lemma \ref{nonconf_part}, we easily obtain
\begin{equation}\label{L2_mid1}
\|u-u_{\mathcal{T}}\|_0 \leq \|u-u_{\mathcal{T},c}\|_0 + \|u_{\mathcal{T},n}\|_0 \leq \|u-u_{\mathcal{T},c}\|_0 + Ch^2 |u|_2.
\end{equation}
In the following, we devote ourselves to estimating $\|u-u_{\mathcal{T},c}\|_0$. To this end,
we introduce an auxiliary problem: find $\varphi\in \mathbb{H}^2(\Omega)$ such that
\begin{equation}\label{dual_problem}
-\Delta \varphi =u-u_{\mathcal{T},c} \quad \mbox{in}\  \Omega   \quad \mbox{and} \quad \varphi=0 \quad \mbox{on}\  \partial \Omega.
\end{equation}
It is well-known that (cf. \cite{G})
\begin{equation}\label{dual_problem_mid1}
\|\varphi\|_{2} \leq C\|u-u_{\mathcal{T},c}\|_0.
\end{equation}
For $w,v\in \mathbb{H}^1(\Omega)$, we define the bilinear form
$$
e_K(w,v):=\int_{K} \nabla w \cdot \nabla v    \quad \mbox{and} \quad  a(w,v):= \sum\limits_{K\in\mathcal{T}} e_K(w,v).
$$

\begin{lemma}\label{L2error_decom}
It holds that
\begin{equation}\label{L2error_decom_mid0}
\|u-u_{\mathcal {T},c}\|_{0}^2 = a(u-u_{\mathcal {T},c}, \varphi -Q_{\mathcal{T}}\varphi ) +  a(u-u_{\mathcal{T}}, Q_{\mathcal{T}}\varphi) + a(u_{\mathcal {T},n}, Q_{\mathcal{T}}\varphi- \varphi) + a(u_{\mathcal {T},n}, \varphi).
\end{equation}
\end{lemma}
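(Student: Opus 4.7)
The plan is to prove the identity by a standard duality (Aubin--Nitsche type) decomposition, inserting the conforming interpolant $Q_{\mathcal{T}}\varphi$ and the decomposition $u_{\mathcal{T}}=u_{\mathcal{T},c}+u_{\mathcal{T},n}$ into the Green's-formula representation of $\|u-u_{\mathcal{T},c}\|_{0}^{2}$.

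First I would note that the conforming part $u_{\mathcal{T},c}$ of the Wilson solution lies in $\mathbb{H}_{0}^{1}(\Omega)$. Indeed, its restriction to each $K$ is bilinear, so along any interior edge it is linear and determined by its two vertex values; since these values are shared between adjacent rectangles, $u_{\mathcal{T},c}$ is continuous across interelement boundaries, and it vanishes at all boundary vertices (and hence on $\partial\Omega$). Therefore $u-u_{\mathcal{T},c}\in \mathbb{H}_{0}^{1}(\Omega)$, which justifies applying Green's formula to the auxiliary problem \eqref{dual_problem}:
$$
\|u-u_{\mathcal{T},c}\|_{0}^{2}
= \bigl(u-u_{\mathcal{T},c},\,-\Delta\varphi\bigr)
= \int_{\Omega}\nabla(u-u_{\mathcal{T},c})\cdot\nabla\varphi
= a(u-u_{\mathcal{T},c},\varphi).
$$

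Next I would insert $Q_{\mathcal{T}}\varphi$ by writing $\varphi=(\varphi-Q_{\mathcal{T}}\varphi)+Q_{\mathcal{T}}\varphi$ in the second argument:
$$
a(u-u_{\mathcal{T},c},\varphi)
= a(u-u_{\mathcal{T},c},\varphi-Q_{\mathcal{T}}\varphi)
+ a(u-u_{\mathcal{T},c},Q_{\mathcal{T}}\varphi).
$$
For the second term I would use the decomposition $u-u_{\mathcal{T},c}=(u-u_{\mathcal{T}})+u_{\mathcal{T},n}$ from \eqref{decompositionFVMsol} (together with the bilinearity of $a$) to obtain
$$
a(u-u_{\mathcal{T},c},Q_{\mathcal{T}}\varphi)
= a(u-u_{\mathcal{T}},Q_{\mathcal{T}}\varphi)
+ a(u_{\mathcal{T},n},Q_{\mathcal{T}}\varphi),
$$
and then split the last summand by adding and subtracting $\varphi$:
$$
a(u_{\mathcal{T},n},Q_{\mathcal{T}}\varphi)
= a(u_{\mathcal{T},n},Q_{\mathcal{T}}\varphi-\varphi)
+ a(u_{\mathcal{T},n},\varphi).
$$
Combining these three identities gives \eqref{L2error_decom_mid0}.

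Since the whole argument is essentially algebraic manipulation once the duality step is in place, there is no deep obstacle; the only point that requires a little care is checking $u_{\mathcal{T},c}\in\mathbb{H}_{0}^{1}(\Omega)$ so that the integration-by-parts step is legitimate (i.e., the trace of $u-u_{\mathcal{T},c}$ on $\partial\Omega$ vanishes and no boundary terms appear). The decomposition is deliberately set up so that each of the four resulting summands can later be bounded by $h^{2}|u|_{2}\,|\varphi|_{2}$: the first and third via interpolation error for $Q_{\mathcal{T}}$ combined with $\|u-u_{\mathcal{T},c}\|_{1,\mathcal{T}}$ and Lemma~\ref{nonconf_part}; the second via the FVM Galerkin identity and the nonconforming error; and the fourth by the nonconforming consistency argument standard for the Wilson element.
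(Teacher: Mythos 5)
Your proof is correct and follows essentially the same route as the paper's: apply Green's formula to the dual problem to get $\|u-u_{\mathcal{T},c}\|_0^2 = a(u-u_{\mathcal{T},c},\varphi)$, then insert $Q_{\mathcal{T}}\varphi$, split $u-u_{\mathcal{T},c}=(u-u_{\mathcal{T}})+u_{\mathcal{T},n}$, and add and subtract $\varphi$ in the last term. Your extra remark verifying that $u_{\mathcal{T},c}\in\mathbb{H}_0^1(\Omega)$ (so the integration by parts produces no boundary terms) is a point the paper leaves implicit, and is a welcome addition.
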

\begin{proof}
An application of the Green's formula to (\ref{dual_problem}), we get that
\begin{equation}\label{L2error_decom_mid1}
\|u-u_{\mathcal {T},c}\|_{0}^2 = a(u-u_{\mathcal {T},c}, \varphi).
\end{equation}
Obviously,
\begin{eqnarray*}
   a(u-u_{\mathcal {T},c}, \varphi) &=& a(u-u_{\mathcal {T},c}, \varphi -Q_{\mathcal{T}}\varphi ) +  a(u-u_{\mathcal {T},c}, Q_{\mathcal{T}} \varphi)\\
   &=& a(u-u_{\mathcal {T},c}, \varphi -Q_{\mathcal{T}}\varphi ) +  a(u-u_{\mathcal {T}}, Q_{\mathcal{T}} \varphi) +  a(u_{\mathcal {T},n}, Q_{\mathcal{T}} \varphi)\\
   &=& a(u-u_{\mathcal {T},c}, \varphi -Q_{\mathcal{T}}\varphi ) +  a(u-u_{\mathcal {T}}, Q_{\mathcal{T}} \varphi) + a(u_{\mathcal {T},n}, Q_{\mathcal{T}}\varphi- \varphi) + a(u_{\mathcal {T},n}, \varphi).
\end{eqnarray*}
Thus, the desired result of this lemma is proved.
\end{proof}

We next estimate the terms on the right-hand side of (\ref{L2error_decom_mid0}) respectively. The following lemma gives the estimation of the first term.
\begin{lemma}\label{L2error1}
If $\mathscr{T}$ is regular, then there holds
$$
|a(u-u_{\mathcal {T},c}, \varphi -Q_{\mathcal{T}}\varphi )|\leq C h^2 |u|_2 \|u-u_{\mathcal {T},c}\|_0.
$$
\end{lemma}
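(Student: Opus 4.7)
The plan is to bound the bilinear form by Cauchy--Schwarz elementwise and then estimate the two resulting broken-$H^1$ factors using ingredients already in place. Since $a(w,v)=\sum_{K\in\mathcal{T}}\int_K \nabla w\cdot\nabla v$, a direct application of the Cauchy--Schwarz inequality on each element and then across the sum yields
$$|a(u-u_{\mathcal{T},c},\varphi-Q_{\mathcal{T}}\varphi)| \leq \|u-u_{\mathcal{T},c}\|_{1,\mathcal{T}}\cdot|\varphi-Q_{\mathcal{T}}\varphi|_{1}.$$

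For the dual-solution factor, I would invoke the conforming-part interpolation estimate $|\varphi-Q_{\mathcal{T}}\varphi|_1\leq Ch|\varphi|_2$ from Lemma \ref{interpolation_error} together with the elliptic regularity (\ref{dual_problem_mid1}) to convert $|\varphi|_2$ into $C\|u-u_{\mathcal{T},c}\|_0$, giving an overall bound $|\varphi-Q_{\mathcal{T}}\varphi|_1\leq Ch\|u-u_{\mathcal{T},c}\|_0$.

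For the primal factor $\|u-u_{\mathcal{T},c}\|_{1,\mathcal{T}}$, I would exploit the decomposition $u-u_{\mathcal{T},c}=(u-u_{\mathcal{T}})+u_{\mathcal{T},n}$ coming from (\ref{decompositionFVMsol}) and the triangle inequality in the broken seminorm. The FVM error term satisfies $\|u-u_{\mathcal{T}}\|_{1,\mathcal{T}}\leq Ch|u|_2$ by Theorems \ref{thm:WilsonConvergence} and \ref{Wilson eigenvaluepositive}, while the nonconforming part satisfies $\|u_{\mathcal{T},n}\|_{1,\mathcal{T}}\leq Ch|u|_2$ by Lemma \ref{nonconf_part}. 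Multiplying the two factors then produces the claimed $Ch^2|u|_2\|u-u_{\mathcal{T},c}\|_0$.

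There is no substantive obstacle; all ingredients have already been assembled. The only point requiring care is the choice of projection: one must use the \emph{conforming} part $Q_{\mathcal{T}}\varphi$ rather than the full Wilson interpolant $P_{\mathcal{T}}\varphi$, so that $\varphi-Q_{\mathcal{T}}\varphi$ stays in $\mathbb{H}^1(\Omega)$ and so that Lemma \ref{interpolation_error} delivers the genuine $Ch|\varphi|_2$ bound in the (unbroken) $|\cdot|_1$ seminorm, which in turn couples correctly with the $H^2$-regularity (\ref{dual_problem_mid1}) of the dual problem.
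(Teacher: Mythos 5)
Your proof is correct and follows essentially the same route as the paper: Cauchy--Schwarz to split off $|\varphi-Q_{\mathcal{T}}\varphi|_1\le Ch\|u-u_{\mathcal{T},c}\|_0$ via Lemma \ref{interpolation_error} and the regularity estimate (\ref{dual_problem_mid1}), then the decomposition $u-u_{\mathcal{T},c}=(u-u_{\mathcal{T}})+u_{\mathcal{T},n}$ with Theorems \ref{thm:WilsonConvergence}, \ref{Wilson eigenvaluepositive} and Lemma \ref{nonconf_part} to bound the primal factor by $Ch|u|_2$. No substantive differences from the paper's argument.
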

\begin{proof}
Using Lemma \ref{interpolation_error} and (\ref{dual_problem_mid1}), we obtain that
\begin{equation}\label{L2error1_mid2}
|a(u-u_{\mathcal {T},c}, \varphi -Q_{\mathcal{T}}\varphi )|\leq |u-u_{\mathcal {T},c}|_1 \cdot |\varphi -Q_{\mathcal{T}}\varphi|_1  \leq C h|u-u_{\mathcal {T},c}|_1 \cdot \|u-u_{\mathcal {T},c}\|_0.
\end{equation}
From Theorems \ref{thm:WilsonConvergence} and \ref{Wilson eigenvaluepositive} and Lemma \ref{nonconf_part}, we derive
\begin{equation}\label{L2error1_mid2}
|u-u_{\mathcal {T},c}|_1 \leq \|u-u_{\mathcal{T}}\|_{1,\mathcal{T}} + \|u_{\mathcal{T},n}\|_{1,\mathcal{T}} \leq C h |u|_2.
\end{equation}
Substituting (\ref{L2error1_mid2}) into (\ref{L2error1_mid2}) completes the proof of this lemma.
\end{proof}

The results of the next lemma can be found in \cite{LL2}.
\begin{lemma}\label{inverse_inequaltiy}
For any $K\in \mathcal {T}$ and any function $w\in \mathbb{H}^3(K)$
$$
|\hat{w}|_{m,\hat{K}}\leq Ch^{m-1}|u|_{m,K}, \ m=0,1, \quad |\hat{w}|_{2,\hat{K}}\leq Ch(|u|_{1,K}+|u|_{2,K}), \quad |\hat{w}|_{3,\hat{K}}\leq Ch^{2}\|u\|_{3,K}.
$$
\end{lemma}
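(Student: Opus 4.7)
The plan is to establish all four estimates by the standard affine-scaling argument, since $\mathcal{F}_K$ is an affine map from $\hat K$ onto the axis-aligned rectangle $K$ with diagonal Jacobian $\mathrm{diag}(h_{1,K},h_{2,K})$ and determinant $h_{1,K}h_{2,K}=|K|/4$. Under the regularity of $\mathscr{T}$ expressed through (\ref{regular_inequlatywilson}), the shape-parameter bounds $\lambda_1\le r_K\le \lambda_2$ force $h_{1,K}\sim h_{2,K}\sim h_K$, so any power of $h_{1,K}$ or $h_{2,K}$ can be exchanged with the corresponding power of $h_K$ (equivalently, of $h$) up to a constant depending only on $\lambda_1,\lambda_2$.

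First I would apply the chain rule to $\hat w:=w\circ\mathcal{F}_K$ to obtain, for every multi-index $\alpha=(\alpha_1,\alpha_2)$,
\begin{equation*}
\partial^{\alpha}_{\hat x}\hat w(\hat x) \;=\; h_{1,K}^{\alpha_1}\,h_{2,K}^{\alpha_2}\,(\partial^{\alpha}_x w)(\mathcal{F}_K(\hat x)).
\end{equation*}
Transforming the integral back to $K$ via $d\hat x = (h_{1,K}h_{2,K})^{-1}\,dx$ then gives, for each $m\ge 0$,
\begin{equation*}
|\hat w|_{m,\hat K}^2 \;=\; (h_{1,K}h_{2,K})^{-1}\sum_{|\alpha|=m} h_{1,K}^{2\alpha_1}h_{2,K}^{2\alpha_2}\,\|\partial^{\alpha}w\|_{0,K}^2.
\end{equation*}
The prefactor $h_{1,K}^{2\alpha_1}h_{2,K}^{2\alpha_2}/(h_{1,K}h_{2,K})$ is comparable to $h_K^{2m-2}$ with constants determined only by $\lambda_1,\lambda_2$.

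Taking $m=0$ and $m=1$ in this identity yields $|\hat w|_{m,\hat K}\le Ch^{m-1}|w|_{m,K}$. Taking $m=2$ gives $|\hat w|_{2,\hat K}\le Ch\,|w|_{2,K}\le Ch(|w|_{1,K}+|w|_{2,K})$, and taking $m=3$ gives $|\hat w|_{3,\hat K}\le Ch^{2}|w|_{3,K}\le Ch^{2}\|w\|_{3,K}$. Thus each of the four claimed inequalities follows, in fact with slightly sharper forms available (the additional lower-order terms in the $m=2,3$ bounds are not needed).

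There is no substantive obstacle in this argument; it is essentially a bookkeeping exercise with powers of $h_{1,K}$ and $h_{2,K}$. The only delicate point is to verify that the implied constants depend solely on $\lambda_1,\lambda_2$ and not on the particular rectangle, which is immediate from the explicit chain-rule identity above. For this reason the authors simply cite the estimate from \cite{LL2}.
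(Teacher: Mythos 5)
Your scaling argument is correct and is exactly the standard computation behind this lemma; the paper itself gives no proof at all, merely citing \cite{LL2} (where the extra lower-order terms in the $m=2,3$ bounds arise because the reference treats general bilinear maps on quadrilaterals, whereas for axis-aligned rectangles the map is affine and, as you note, the sharper bounds $|\hat w|_{m,\hat K}\le Ch_K^{m-1}|w|_{m,K}$ hold). One small caution: your identity yields the factor $h_K^{m-1}$, and for $m\ge 1$ this is dominated by $h^{m-1}$ since $h_K\le h$; but for $m=0$ the inequality goes the wrong way ($h_K^{-1}\ge h^{-1}$), so the $m=0$ estimate as literally stated with the global mesh size $h$ requires either quasi-uniformity or reading $h$ as $h_K$ — a defect of the lemma's statement rather than of your argument, but one you should not paper over with the phrase ``yields $Ch^{m-1}$.''
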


We introduce some notations. For each $K\in \mathcal {T}$, set
$$
w_{ij}=(Q_{\mathcal{T}}\varphi)_{i,K}-(Q_{\mathcal{T}}\varphi)_{j,K}, \  i,j\in \mathbb{N}_{4} \quad  \mbox{and} \quad w_{1234}=(Q_{\mathcal{T}}\varphi)_{1,K}-(Q_{\mathcal{T}}\varphi)_{2,K}+(Q_{\mathcal{T}}\varphi)_{3,K}-(Q_{\mathcal{T}}\varphi)_{4,K}.
$$
where $(Q_{\mathcal{T}}\varphi)_{i,K}, i\in \mathbb{N}_{4}$ are defined as in (\ref{coef_on_K}).
From (3.12) and (3.13) of \cite{LL2}, we get that
\begin{equation}\label{wij}
|w_{ij}| \leq C |Q_{\mathcal{T}}\varphi|_{1,K},\  i,j \in \mathbb{N}_{4} \quad  \mbox{and} \quad |w_{1234}| \leq C h \|Q_{\mathcal{T}}\varphi\|_{2,K}.
\end{equation}
Let $\hat{x}:=(\hat{x}_1, \hat{x}_2)^T\in \hat{K}$. Define
$$
F_1(\hat{x}):= (w_{12}-w_{34},0)\mathbf{F}_K \nabla (\hat{u}-\hat{u}_{\mathcal{T}}),\  R_1(F_1(\hat{x})):=\frac{1}{2} \left(\int_{-1}^0 (\hat{x}_1+1)^2\frac{\partial^2 F_1}{\partial \hat{x}_1^2} d\hat{x}_1 +  \int_{0}^1 (\hat{x}_1-1)^2\frac{\partial^2 F_1}{\partial \hat{x}_1^2} d\hat{x}_1 \right)
$$
and
$$
F_2(\hat{x}):= (0, w_{13}-w_{24})\mathbf{F}_K \nabla (\hat{u}-\hat{u}_{\mathcal{T}}),\ R_2(F_2(\hat{x})):=\frac{1}{2} \left(\int_{-1}^0 (\hat{x}_2+1)^2\frac{\partial^2 F_2}{\partial \hat{x}_2^2} d\hat{x}_2 +  \int_{0}^1 (\hat{x}_2-1)^2\frac{\partial^2 F_2}{\partial \hat{x}_2^2} d\hat{x}_2 \right)
$$

We are ready to estimate the second term on the right-hand side of (\ref{L2error_decom_mid0}) in the next lemma.
\begin{lemma}\label{L2error2}
If $\mathscr{T}$ is regular and $u\in \mathbb{H}_0^1(\Omega)\cap \mathbb{H}^3(\Omega)$, then there holds
$$
|a(u-u_{\mathcal {T}}, Q_{\mathcal{T}} \varphi)| \leq ch^2 \|u\|_3 \|u-u_{\mathcal{T},c}\|_0.
$$
\end{lemma}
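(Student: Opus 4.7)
The plan is to use the FVM equation to reduce $a(u-u_{\mathcal{T}},Q_{\mathcal{T}}\varphi)$ to a difference between the continuous and discrete bilinear forms evaluated on the same argument pair, and then to perform a Peano-kernel analysis on the reference cell $\hat K$ to extract the factor $h^2$. The starting point is that $Q_{\mathcal{T}}\varphi\in\mathbb{U}_{\mathcal{T}}$ is the purely conforming part of $P_{\mathcal{T}}\varphi$, so by (\ref{test_spa_dec}) the test function $v^*:=\Pi_{\mathcal{T}^{*}}(Q_{\mathcal{T}}\varphi)$ is piecewise constant on $\mathcal{T}^{*}$ and in particular lies in $\mathbb{V}_{\mathcal{T}^{*}}\cap\mathbb{H}^{1}_{\mathcal{T}^{*}}(\Omega)$. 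Combining the identity $a_{\mathcal{T}}(u,v^*)=(f,v^*)$, valid for all $v^*\in\mathbb{H}^{1}_{\mathcal{T}^{*}}(\Omega)$, with the FVM equation (\ref{eq:NFVM_sheme}) yields $a_{\mathcal{T}}(u-u_{\mathcal{T}},v^*)=0$, and hence, writing $w:=u-u_{\mathcal{T}}$,
\begin{equation*}
a(u-u_{\mathcal{T}},Q_{\mathcal{T}}\varphi)=a(w,Q_{\mathcal{T}}\varphi)-a_{\mathcal{T}}(w,v^*).
\end{equation*}

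Next I would localize to each primary rectangle $K$ and apply Green's formula. Since $v^*|_{K}$ is piecewise constant on the four dual quadrants, summing the Green identity over them and cancelling the interior dual-edge contributions produces the representation $a_K(w,v^*)=\int_{\partial K}v^*\,\nabla w\cdot\mathbf{n}\,ds-\int_K v^*\Delta w\,dx$, while $e_K(w,Q_{\mathcal{T}}\varphi)$ admits the analogous expression with $Q_{\mathcal{T}}\varphi$ in place of $v^*$. Subtracting, the elementwise contribution collapses to
\begin{equation*}
\int_{\partial K}\bigl(Q_{\mathcal{T}}\varphi-v^*\bigr)\nabla w\cdot\mathbf{n}\,ds \;-\;\int_K\bigl(Q_{\mathcal{T}}\varphi-v^*\bigr)\Delta w\,dx.
\end{equation*}
The difference $Q_{\mathcal{T}}\varphi-v^*$ vanishes at all four vertices of $K$ and has zero mean on every primary edge and every dual quadrant of $K$; this is the structural cancellation that will supply the $h^2$ gain.

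Pulling back via $\mathcal{F}_K$ and expanding $\widehat{Q_{\mathcal{T}}\varphi}=\sum_{i=1}^{4}(Q_{\mathcal{T}}\varphi)_{i,K}\hat\phi_i$ in the nodal basis, a direct calculation (tracking the scaling through $\mathbf{F}_K$) recasts the elementwise contribution as $\int_{\hat K}F_1\,d\hat x+\int_{\hat K}F_2\,d\hat x$ plus a correction proportional to $w_{1234}$, which by the second estimate in (\ref{wij}) already carries one power of $h$. I would then apply the one-dimensional identity
\begin{equation*}
\int_{-1}^{1}F_1(\hat x_1,\hat x_2)\,d\hat x_1=2F_1(0,\hat x_2)+R_1(F_1),
\end{equation*}
obtained by integrating by parts twice on $[-1,0]$ and $[0,1]$ against the Peano kernels $(\hat x_1\pm 1)^2/2$, and its analogue for $F_2$. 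The midpoint terms $F_i(0,\cdot)$ couple only to traces of $\partial_i(\hat u-\hat u_{\mathcal{T}})$ on the dual gridlines; after summing over $K$ and using the trace inequality together with Lemma~\ref{inverse_inequaltiy} and the $H^1$-error bound of Theorems~\ref{thm:WilsonConvergence} and \ref{Wilson eigenvaluepositive}, they contribute at order $h^2\,|u|_3\,|Q_{\mathcal{T}}\varphi|_1$. The Peano remainders $R_i(F_i)$ bring in $\partial_i^2(\hat u-\hat u_{\mathcal{T}})$; combined with the bound $|w_{ij}|\leq C|Q_{\mathcal{T}}\varphi|_{1,K}$ from (\ref{wij}), Lemma~\ref{inverse_inequaltiy} and Cauchy--Schwarz, they yield the same order.

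Summing over $K$, using $|Q_{\mathcal{T}}\varphi|_{1}\leq C\|\varphi\|_2$ (from Lemma~\ref{interpolation_error}), and invoking the dual-problem regularity estimate (\ref{dual_problem_mid1}) $\|\varphi\|_{2}\leq C\|u-u_{\mathcal{T},c}\|_0$ produces the desired inequality. The main obstacle I anticipate is the algebraic reduction inside $\hat K$: one must carefully match the jumps of $\widehat{Q_{\mathcal{T}}\varphi}-\widehat{v^*}$ across each of the four dual gridlines in $\hat K$ and the four half-edges of $\partial\hat K$ with the precise coefficients appearing in $F_1$ and $F_2$, so that the Peano identity yields the $O(h^2)$ cancellation rather than only an $O(h)$ bound.
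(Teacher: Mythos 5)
Your proposal follows essentially the same route as the paper: reduce via $a_{\mathcal{T}}(u-u_{\mathcal{T}},\Pi_{\mathcal{T}^{*}}(Q_{\mathcal{T}}\varphi))=0$ to the elementwise discrepancy $e_K(u-u_{\mathcal{T}},Q_{\mathcal{T}}\varphi)-a_K(u-u_{\mathcal{T}},Q_{\mathcal{T}}^{*}\varphi)$, invoke the Peano-kernel identity of Theorem~1 of \cite{LL2} to expose the $R_i(F_i)$ remainders and the $w_{1234}$ corrections, and close with (\ref{wij}), Lemma~\ref{inverse_inequaltiy}, the $H^1$ error bound and the dual regularity estimate (\ref{dual_problem_mid1}) --- exactly the paper's argument, which likewise defers the algebraic identity to \cite{LL2}. (One side remark is inaccurate: $Q_{\mathcal{T}}\varphi-\Pi_{\mathcal{T}^{*}}Q_{\mathcal{T}}\varphi$ does not have zero mean on each dual quadrant, only on the primary edges; but this claim is not load-bearing in your outline.)
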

\begin{proof}
Let $Q^*_{\mathcal{T}}\varphi :=\Pi_{\mathcal{T}^{*}}(Q_{\mathcal{T}} \varphi)$. Noting that $Q^*_{\mathcal{T}}\varphi \in \mathbb{H}^1_{\mathcal{T}^{*}}(\Omega)$ and
$a_{\mathcal{T}}(u-u_{\mathcal{T}}, Q^*_{\mathcal{T}}\varphi)=0$, we get
\begin{equation}\label{L2error2_mid1}
a(u-u_{\mathcal {T}}, Q_{\mathcal{T}} \varphi) = a(u-u_{\mathcal {T}}, Q_{\mathcal{T}} \varphi) - a_{\mathcal{T}}(u-u_{\mathcal{T}}, Q^*_{\mathcal{T}}\varphi)= \sum_{K\in\mathcal{T}} \left(e_K (u-u_{\mathcal {T}}, Q_{\mathcal{T}} \varphi) - a_K (u-u_{\mathcal {T}}, Q_{\mathcal{T}}^* \varphi) \right).
\end{equation}
For $K=\Theta\{P_1, P_2, P_3, P_4\}\in \mathcal{T}$, we use $M_i, i\in \mathbb{N}_4$ to denote the middle point of the edge $P_iP_j$ with $P_5:=P_1$ and use $Q$ to denote its center.
Similar arguments as those in Theorem 1 of \cite{LL2} reveal that
\begin{equation} \label{L2error2_mid2}
\begin{split}
e_K (u-u_{\mathcal {T}}, Q_{\mathcal{T}} \varphi)&- a_K (u-u_{\mathcal {T}}, Q_{\mathcal{T}}^* \varphi) =  \int_{-1}^1 R_1(F_1(\hat{x})) d\hat{x}_2 + \int_{-1}^1 R_2(F_2(\hat{x})) d\hat{x}_1 \\
 & +  w_{1234} \int_{\hat{K}}(\hat{x}_2, \hat{x}_1)\mathbf{F}_K \nabla (\hat{u}-\hat{u}_{\mathcal{T}}) d\hat{x}
 + w_{1234}\int_{\widehat{M_3QM_2}}\nabla (u-u_{\mathcal{T}})\cdot \mathbf{n} ds,
 \end{split}
 \end{equation}
where $\widehat{M_3QM_2}:=\overline{M_3 Q}\cup \overline{QM_2}$ and $\mathbf{n}$ is the outward unit normal vector on $\widehat{M_3QM_2}$. We begin to estimate the terms on the right-hand side of (\ref{L2error2_mid2})

Obviously
\begin{equation}\label{L2error2_mid3}
\left|\int_{-1}^1 R_1(F_1(\hat{x})) d\hat{x}_2\right| \leq  \int_{\hat{K}} (\hat{x}_1^2+1) \left|\frac{\partial^2 F_1}{\partial \hat{x}_1^2}\right| d\hat{x} \leq C  \|\frac{\partial^2 F_1}{\partial \hat{x}_1^2}\|_{0,\hat{K}}.
\end{equation}
The regularity of $\mathscr{T}$ and (\ref{wij}) implies that
\begin{equation}\label{L2error2_mid4}
\left|\frac{\partial^2 F_1}{\partial \hat{x}_1^2}\right| = \left|\frac{h_{2,K}^2}{|K|}(w_{12}-w_{34})\frac{\partial^3(\hat{u}-\hat{u}_{\mathcal{T}})}{\partial \hat{x}_1^3}\right| \leq C |Q_{\mathcal{T}}\varphi|_{1,K} \cdot \left|\frac{\partial^3(\hat{u}-\hat{u}_{\mathcal{T}})}{\partial \hat{x}_1^3}\right|.
\end{equation}
Combining (\ref{L2error2_mid3}), (\ref{L2error2_mid4}) and Lemma \ref{inverse_inequaltiy} yields that
\begin{equation}\label{L2error2_mid9}
\left|\int_{-1}^1 R_1(F_1(\hat{x})) d\hat{x}_2\right| \leq Ch^2 \|u\|_{3,K} |Q_{\mathcal{T}}\varphi|_{1,K}.
\end{equation}
In the same way as above, we have that
\begin{equation}\label{L2error2_mid10}
\left|\int_{-1}^1 R_2(F_2(\hat{x})) d\hat{x}_1\right| \leq Ch^2 \|u\|_{3,K} |Q_{\mathcal{T}}\varphi|_{1,K}.
\end{equation}
By making use the regularity of $\mathscr{T}$ and the Cauchy-Schwartz inequality, we derive that
$$
\left|w_{1234} \int_{\hat{K}}(\hat{x}_2, \hat{x}_1)\mathbf{F}_K \nabla (\hat{u}-\hat{u}_{\mathcal{T}}) d\hat{x}\right| \leq C|w_{1234}| \cdot |\hat{u}-\hat{u}_{\mathcal{T}}|_{1,\hat{K}}.
$$
Thus, applying Lemma \ref{inverse_inequaltiy} and (\ref{wij}) to the above inequality leads to that
\begin{equation}\label{L2error2_mid5}
\left|w_{1234} \int_{\hat{K}}(\hat{x}_2, \hat{x}_1)\mathbf{F}_K \nabla (\hat{u}-\hat{u}_{\mathcal{T}}) d\hat{x}\right| \leq Ch \|Q_{\mathcal{T}}\varphi\|_{2,K} |u-u_{\mathcal{T}}|_{1,K}.
\end{equation}
Using the regularity of $\mathscr{T}$, the variable transformation from $K$ to $\hat{K}$ and the trace theorem, we have that
\begin{equation}\label{L2error2_mid6}
\int_{\widehat{M_3QM_2}}\nabla (u-u_{\mathcal{T}})\cdot \mathbf{n} ds \leq C \|\nabla (\hat{u}-\hat{u}_{\mathcal{T}})\|_{1,\hat{K}}.
\end{equation}
By Lemma \ref{inverse_inequaltiy}, we get that
\begin{equation}\label{L2error2_mid7}
\|\nabla (\hat{u}-\hat{u}_{\mathcal{T}})\|_{1,\hat{K}} \leq C (|u-u_{\mathcal{T}}|_{1,K}+ h|u-u_{\mathcal{T}}|_{2,K})
\end{equation}
From (\ref{L2error2_mid6}), (\ref{L2error2_mid7}) and (\ref{wij}), we get that
\begin{equation}\label{L2error2_mid8}
\left|w_{1234}\int_{\widehat{M_3QM_2}}\nabla (u-u_{\mathcal{T}})\cdot n ds\right|\leq C h(|u-u_{\mathcal{T}}|_{1,K}+ h|u-u_{\mathcal{T}}|_{2,K})\|Q_{\mathcal{T}}\varphi\|_{2,K}.
\end{equation}

Finally, combining (\ref{L2error2_mid2}) with (\ref{L2error2_mid9}), (\ref{L2error2_mid10}), (\ref{L2error2_mid5}) and (\ref{L2error2_mid8}), we obtain
$$
\left|e_K (u-u_{\mathcal {T}}, Q_{\mathcal{T}} \varphi)- a_K (u-u_{\mathcal {T}}, Q_{\mathcal{T}}^* \varphi)\right| \leq C h^2 \|u\|_{3,K}|Q_{\mathcal{T}}\varphi|_{1,K} + Ch(|u-u_{\mathcal{T}}|_{1,K}+ h|u-u_{\mathcal{T}}|_{2,K})\|Q_{\mathcal{T}}\varphi\|_{2,K}.
$$
This combined with (\ref{L2error2_mid1}), Theorems \ref{thm:WilsonConvergence} and \ref{Wilson eigenvaluepositive} and (\ref{dual_problem_mid1}) leads to the desired result of this lemma.
\end{proof}

The third term on the right-hand side of (\ref{L2error_decom_mid0}) is estimated in the next lemma.
\begin{lemma}\label{L2error3}
If $\mathscr{T}$ is regular,
 then there holds
$$
\left|a(u_{\mathcal {T},n}, Q_{\mathcal{T}}\varphi- \varphi) \right| \leq Ch^2 |u|_2 \|u-u_{\mathcal{T},c}\|_0.
$$
\end{lemma}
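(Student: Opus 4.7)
The plan is to bound this term by a direct application of Cauchy-Schwarz, splitting the product into a factor controlling $u_{\mathcal{T},n}$ and a factor controlling the interpolation error of $\varphi$, and then invoking the already established lemmas.

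First I would observe that since $Q_{\mathcal{T}}\varphi$ is the conforming (continuous) part of the Wilson interpolant, the difference $Q_{\mathcal{T}}\varphi - \varphi$ lies in $\mathbb{H}^1(\Omega)$, so in particular $|Q_{\mathcal{T}}\varphi-\varphi|_{1,\mathcal{T}} = |Q_{\mathcal{T}}\varphi-\varphi|_{1}$. Applying Cauchy-Schwarz on each rectangle $K$ to the integrals defining $e_K$ and then summing gives
\[
|a(u_{\mathcal{T},n}, Q_{\mathcal{T}}\varphi - \varphi)| \leq \|u_{\mathcal{T},n}\|_{1,\mathcal{T}} \cdot |Q_{\mathcal{T}}\varphi - \varphi|_{1}.
\]

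Next I would estimate each factor. For the first factor, Lemma \ref{nonconf_part} immediately gives $\|u_{\mathcal{T},n}\|_{1,\mathcal{T}} \leq C h |u|_2$. For the second factor, the interpolation error estimate for the conforming part in Lemma \ref{interpolation_error} yields $|Q_{\mathcal{T}}\varphi - \varphi|_1 \leq C h |\varphi|_2$, and the elliptic regularity bound (\ref{dual_problem_mid1}) for the dual problem then gives $|\varphi|_2 \leq \|\varphi\|_2 \leq C \|u-u_{\mathcal{T},c}\|_0$. Combining these three inequalities produces the desired bound
\[
|a(u_{\mathcal{T},n}, Q_{\mathcal{T}}\varphi - \varphi)| \leq C h^2 |u|_2 \, \|u-u_{\mathcal{T},c}\|_0.
\]

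There is no real obstacle in this lemma; the work was done in advance by Lemma \ref{nonconf_part} (which already supplies the crucial $O(h)$ bound for the nonconforming Wilson part, as opposed to the $O(1)$ bound one might fear) and by the standard interpolation and regularity estimates. The only minor point to be careful about is that $a(\cdot,\cdot)$ here is the piecewise-defined bilinear form, but since $\varphi \in \mathbb{H}^2(\Omega)$ and $Q_{\mathcal{T}}\varphi$ is globally continuous, no jump terms appear in the second argument and the Cauchy-Schwarz step is clean.
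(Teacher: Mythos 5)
Your proof is correct and follows essentially the same route as the paper: Cauchy--Schwarz on the piecewise bilinear form, then Lemma \ref{nonconf_part} for the $O(h)$ bound on $\|u_{\mathcal{T},n}\|_{1,\mathcal{T}}$, Lemma \ref{interpolation_error} for $|Q_{\mathcal{T}}\varphi-\varphi|_1\le Ch|\varphi|_2$, and the regularity estimate (\ref{dual_problem_mid1}). You merely spell out the Cauchy--Schwarz step that the paper leaves implicit.
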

\begin{proof}
Using Lemma \ref{nonconf_part} and Lemma \ref{interpolation_error}, we have that
$$
\left|a(u_{\mathcal {T},n}, Q_{\mathcal{T}}\varphi- \varphi)\right| \leq C h^2 |u|_2 |\varphi|_2.
$$
This combined with (\ref{dual_problem_mid1}) yields the desired result of this lemma.
\end{proof}

In the next lemma, we present the estimation of the last term on the right-hand side of (\ref{L2error_decom_mid0}).
\begin{lemma}\label{L2error4}
If $\mathscr{T}$ is regular,
 then there holds
$$
\left|a(u_{\mathcal {T},n}, \varphi)\right| \leq Ch^2 |u|_2 \|u-u_{\mathcal{T},c}\|_0.
$$
\end{lemma}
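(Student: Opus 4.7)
The plan is to expand $a(u_{\mathcal{T},n},\varphi)=\sum_{K\in\mathcal{T}}\int_K\nabla u_{\mathcal{T},n}\cdot\nabla\varphi$ and exploit a crucial orthogonality property of the Wilson nonconforming basis: on the reference square $\hat K$, the bubble functions $\hat\phi_5=(1/8)(x_1^2-1)$ and $\hat\phi_6=(1/8)(x_2^2-1)$ satisfy $\int_{\hat K}\nabla\hat\phi_i=\mathbf 0$ for $i=5,6$. Transforming back via the affine mapping $\mathcal{F}_K$, this will give, for every $K\in\mathcal{T}$ and every linear polynomial $p$ on $K$,
\begin{equation*}
\int_K \nabla u_{\mathcal{T},n}\cdot\nabla p\, dx = 0.
\end{equation*}
Indeed, the chain rule turns $\int_K\nabla u_{\mathcal T,n}\cdot\nabla p$ into $C\cdot(\mathbf F_K \nabla_{\hat x}\hat p)\cdot\int_{\hat K}\nabla_{\hat x}\hat u_{\mathcal T,n}$, and the last integral vanishes.

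With this orthogonality in hand, I would introduce a piecewise linear approximation $\Pi_1\varphi$ of $\varphi$ (for instance, the elementwise $L^2$-projection onto linears, or the standard linear interpolant on each rectangle), so that the local linear piece of $\Pi_1\varphi$ disappears in the pairing and we can write
\begin{equation*}
a(u_{\mathcal T,n},\varphi)=\sum_{K\in\mathcal T}\int_K \nabla u_{\mathcal T,n}\cdot\nabla(\varphi-\Pi_1\varphi)\,dx.
\end{equation*}
Applying the Cauchy--Schwarz inequality elementwise and then the standard approximation estimate $|\varphi-\Pi_1\varphi|_{1,\mathcal T}\le Ch|\varphi|_2$ gives
\begin{equation*}
|a(u_{\mathcal T,n},\varphi)|\le |u_{\mathcal T,n}|_{1,\mathcal T}\cdot|\varphi-\Pi_1\varphi|_{1,\mathcal T}\le Ch\,|u_{\mathcal T,n}|_{1,\mathcal T}\,|\varphi|_2.
\end{equation*}

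To finish, I would invoke the first inequality of Lemma~\ref{nonconf_part} to bound $|u_{\mathcal T,n}|_{1,\mathcal T}\le Ch|u|_2$, and the regularity estimate \eqref{dual_problem_mid1} to bound $|\varphi|_2\le C\|u-u_{\mathcal T,c}\|_0$. Multiplying yields the claimed $Ch^2|u|_2\|u-u_{\mathcal T,c}\|_0$. The main obstacle is really the first step: cleanly verifying the orthogonality $\int_K\nabla u_{\mathcal T,n}\cdot\nabla p=0$ for linear $p$ via the affine mapping with a non-square element, since the Jacobian $\mathbf F_K$ mixes the scaling factors $h_{1,K},h_{2,K}$; the diagonal form of $\mathbf F_K$ on rectangles and the separated structure of $\hat\phi_5,\hat\phi_6$ make it work, but this is the technical point that needs to be stated precisely. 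Once that is done, the remaining estimates are routine applications of already-proved lemmas.
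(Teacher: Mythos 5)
Your proof is correct, but it follows a genuinely different route from the paper's. The paper applies Green's formula elementwise to write $a(u_{\mathcal{T},n},\varphi)=\sum_K\int_{\partial K}u_{\mathcal{T},n}\,\nabla\varphi\cdot\mathbf{n}-\sum_K\int_K u_{\mathcal{T},n}\,\Delta\varphi$, bounds the boundary sum by $Ch\|\varphi\|_2\|u_{\mathcal{T},n}\|_{1,\mathcal{T}}$ via the technique of Theorem~5 of the Shi reference, and bounds the volume term by $|\varphi|_2\|u_{\mathcal{T},n}\|_0$; it therefore needs \emph{both} estimates of Lemma~\ref{nonconf_part}, including the $L^2$ bound $\|u_{\mathcal{T},n}\|_0\le Ch^2|u|_2$. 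You instead exploit the elementwise orthogonality $\int_K\nabla u_{\mathcal{T},n}\cdot\nabla p=0$ for linear $p$, which does hold: on the reference square $\nabla\hat\phi_5=(\hat x_1/4,0)$ and $\nabla\hat\phi_6=(0,\hat x_2/4)$ have vanishing mean, and since the affine map of an axis-aligned rectangle has a diagonal Jacobian, the transformed integral is $(\nabla p)^T\mathbf{D}\int_{\hat K}\nabla_{\hat x}\hat u_{\mathcal{T},n}\,d\hat x=0$ for a constant diagonal matrix $\mathbf{D}$ (your $\mathbf{F}_K$ is not quite the right matrix, but the structure is what matters). Subtracting an elementwise linear projection of $\varphi$ and using Bramble--Hilbert then gives $Ch|u_{\mathcal{T},n}|_{1,\mathcal{T}}|\varphi|_2$, and the first inequality of Lemma~\ref{nonconf_part} together with \eqref{dual_problem_mid1} finishes. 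Your argument is more self-contained (no integration by parts, no appeal to the external boundary-term estimate, and only the $H^1$ bound on $u_{\mathcal{T},n}$ is needed), at the price of relying on a cancellation specific to Wilson bubbles on axis-aligned rectangles; the paper's Green's-formula argument is the more robust one if the elements were general quadrilaterals, where the diagonal-Jacobian orthogonality would fail.
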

\begin{proof}
By the Green's formula, we get that
\begin{equation}\label{L2error4_mid1}
a(u_{\mathcal {T},n}, \varphi) = \sum_{K\in\mathcal{T}} \int_{\partial K} u_{\mathcal {T},n} \nabla \varphi \cdot \mathbf{n}  - \sum_{K\in\mathcal{T}} \int_{K} u_{\mathcal {T},n} \Delta \varphi
\end{equation}
Applying the similar technique as that used in Theorem 5 of \cite{Sh}, we derive that (\ref{L2error4_mid1})
\begin{equation}\label{L2error4_mid4}
\left|\sum_{K\in\mathcal{T}} \int_{\partial K} u_{\mathcal {T},n} \nabla \varphi \cdot \mathbf{n} \right|\leq C h \|\varphi\|_2 \|u_{\mathcal {T},n}\|_{1,\mathcal {T}}.
\end{equation}
An application of the Cauchy-Schwartz inequality implies that
\begin{equation}\label{L2error4_mid3}
\left|\sum_{K\in\mathcal{T}} \int_{K} u_{\mathcal {T},n} \Delta \varphi \right|\leq |\varphi|_2 \|u_{\mathcal{T},n}\|_0.
\end{equation}
Then, from (\ref{L2error4_mid1})-(\ref{L2error4_mid3}),  (\ref{dual_problem_mid1}) and Lemma \ref{nonconf_part}, we get the desired result of this lemma.
\end{proof}

From Lemma \ref{L2error_decom}, Lemma \ref{L2error1}, Lemma \ref{L2error2}, Lemma \ref{L2error3} and Lemma \ref{L2error4}, we can obtain the following $L^2$ error estimate for the hybrid Wilson FVM.
\begin{theorem}\label{Thm_L2error}
Let $u\in \mathbb{H}_0^1(\Omega)\cap \mathbb{H}^3(\Omega)$ be the solution of
(\ref{eq:poisson_equation}) and $u_{\mathcal{T}}\in \mathbb{U}_\mathcal{T}$ be the solution of the hybrid Wilson FVM.
If $\mathscr{T}$ is regular, then there holds
$$
\|u-u_\mathcal{T}\|_{0}\le C h^2 \|u\|_{3}.
$$
\end{theorem}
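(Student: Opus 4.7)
The plan is to package the duality-based ingredients built up in Lemmas~\ref{L2error_decom}--\ref{L2error4} into a single estimate. All the delicate work (the reference-element Taylor expansion that recasts $e_K-a_K$ on each cell and the handling of the nonconforming part $u_{\mathcal T,n}$) is already isolated in those lemmas, so this final step is essentially a synthesis.

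First, I would reduce the $L^2$ error of $u_{\mathcal T}$ to that of its conforming part $u_{\mathcal T,c}$. By the decomposition \eqref{decompositionFVMsol} and the triangle inequality,
\begin{equation*}
\|u-u_{\mathcal T}\|_0 \le \|u-u_{\mathcal T,c}\|_0 + \|u_{\mathcal T,n}\|_0,
\end{equation*}
and the second term is already controlled by $Ch^2|u|_2$ from Lemma~\ref{nonconf_part} (this is exactly inequality \eqref{L2_mid1}). Hence it suffices to prove $\|u-u_{\mathcal T,c}\|_0\le C h^2\|u\|_3$.

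Next, I would invoke the auxiliary elliptic problem \eqref{dual_problem} with right-hand side $u-u_{\mathcal T,c}$ and use Lemma~\ref{L2error_decom}, which expands $\|u-u_{\mathcal T,c}\|_0^2$ as a sum of four bilinear-form contributions built from $\varphi$, $Q_{\mathcal T}\varphi$, $u-u_{\mathcal T}$, $u-u_{\mathcal T,c}$ and $u_{\mathcal T,n}$. Each of the four terms is then bounded by the corresponding lemma: Lemma~\ref{L2error1} handles $a(u-u_{\mathcal T,c},\varphi-Q_{\mathcal T}\varphi)$, Lemma~\ref{L2error2} handles the key nonconforming-duality term $a(u-u_{\mathcal T},Q_{\mathcal T}\varphi)$ (this is where the $\|u\|_3$ regularity enters), and Lemmas~\ref{L2error3} and~\ref{L2error4} handle the two terms involving $u_{\mathcal T,n}$. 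All four bounds share the common form $Ch^2\|u\|_3\,\|u-u_{\mathcal T,c}\|_0$ because each invokes the regularity estimate $\|\varphi\|_2\le C\|u-u_{\mathcal T,c}\|_0$ from \eqref{dual_problem_mid1}.

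Summing the four estimates yields $\|u-u_{\mathcal T,c}\|_0^2 \le Ch^2\|u\|_3\,\|u-u_{\mathcal T,c}\|_0$, and dividing by $\|u-u_{\mathcal T,c}\|_0$ (trivially if it vanishes) gives $\|u-u_{\mathcal T,c}\|_0 \le Ch^2\|u\|_3$. Feeding this back into the first inequality and using $|u|_2\le\|u\|_3$ completes the proof. Since every hard step is already in place, there is no real obstacle here; the only care needed is to confirm that each of Lemmas~\ref{L2error1}--\ref{L2error4} indeed produces the factor $\|u-u_{\mathcal T,c}\|_0$ on the right-hand side so that the quadratic inequality can be divided out, and this is exactly how those lemmas are stated.
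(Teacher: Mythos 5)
Your proposal is correct and follows exactly the route the paper takes: reduce to the conforming part via \eqref{L2_mid1}, expand $\|u-u_{\mathcal T,c}\|_0^2$ by the duality decomposition of Lemma~\ref{L2error_decom}, bound the four terms by Lemmas~\ref{L2error1}--\ref{L2error4}, and divide out the common factor $\|u-u_{\mathcal T,c}\|_0$. The paper's own proof is just this synthesis (stated even more tersely), so there is nothing to add.
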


\section{Numerical Examples}

In this section, we present the numerical results of the C-R FVM to confirm the theoretical analysis in this paper.
The experiments here are performed on a personal computer with 2.30 GHz CPU and 4 Gb RAM. Moreover,
Matlab 7.7 is used as the testing platform and the direct algorithm is used to solve the resulting linear systems.

We consider solving the Poisson equation \eqref{eq:poisson_equation} with $f(x,y):= 2(x^2+y^2-x-y)$ and $\Omega :=(0,1)\times (0,1)$.
The exact solution of the boundary value problem is given by
$
u(x,y) = -x(x-1)y(y-1), \ (x,y)\in [0,1]\times [0,1].
$
From \cite{Z}, we know that the regular condition (\ref{regularity}) of the family $\mathscr{T}$ of the triangulations is equivalent to that there exists a positive constant $\theta_{\inf}$ such that
$$
        \theta_{\min,K} \geq \theta_{\inf}, \quad \textrm{for all } K \in
\bigcup_{\mathcal{T}\in \mathscr{T}} \mathcal{T},
$$
where $\theta_{\min,K}$ denotes the minimum angle of the triangle $K$.
We fist subdivide the region $[0,1]\times [0,1]$ to $M\times N$ rectangles with equal size. Then the triangle mesh of ${\Omega}$ is obtained by connecting the diagonal lines of the resulting rectangles. The triangulation of the case $M=2$ and $N=4$ is illustrated by Figure \ref{fig:partition}. Without loss of generality, we may assume that $M\leq N$.
Obviously,
$$
\tan \theta_{\min,K} = M/N.
$$
We may adjust $M$ and $N$ so as to obtain different triangulations with different minimum angles.

\begin{figure}[ht!]
\centering
\includegraphics[width=0.30\textwidth]{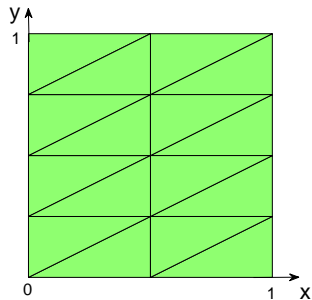}
\caption{A triangulation of the region $\bar{\Omega}$}\label{fig:partition}
\end{figure}

We list the $\|\cdot\|_{1,\mathcal {T}}$-errors and the convergence orders (C.O.) for the C-R FVM under different triangulations with different minimum angles in Table \ref{HLCresult}, where $n$ is the number of unknowns of the resulting linear system.
It follows from  Theorem \ref{thm:CRconvergence} that when $\theta_{\min}=45^{\circ}$, $\theta_{\min}\approx18.43^{\circ}$ or $\theta_{\min}\approx2.86^{\circ}$, the convergence order of the $\|\cdot\|_{1,\mathcal {T}}$-error between the exact solution $u$ of the Poisson equation and the
solution $u_\mathcal {T}$ of the C-R FVM is $O(h)$, which is validated in the numerical results in Table \ref{HLCresult}.

\begin{table}[h]
\caption{The numerical results of the C-R FVM}\label{HLCresult} \vskip 4mm
\centerline{\begin{tabular}{cccc|cccc|cccc} \hline \hline
 $\theta_{\min}$ &  $=$ &  $45^{\circ}$ &                   & $\theta_{\min}$ &  $\approx$ &  $18.43^{\circ}$ &           & $\theta_{\min}$ &  $\approx$ &  $2.86^{\circ}$  \\ \hline
$(M, N)$ & $n$  & $\|\cdot\|_{1,\mathcal {T}}$ & $\mbox{C.O.}$    & $(M, N)$ & $n$  & $\|\cdot\|_{1,\mathcal {T}}$ & $\mbox{C.O.}$    & $(M, N)$ & $n$  & $\|\cdot\|_{1,\mathcal {T}}$ & $\mbox{C.O.}$\\ \hline
$(2, 2)$ & $16$  & 8.42e-2   &                            &    $(1, 3)$ & $13$  & 9.44e-2 &                           &   $(1, 20)$  &   $81$
&  8.41e-2       &    \\
$(4, 4)$ & $56$ & 4.21e-2   & 0.99                     &    $(2, 6)$ & $44$  &6.44e-2   & 0.55                    &     $(2, 40)$ & $282$
  &6.16e-2 &  0.45\\
$(8, 8)$ & $208$ & 2.10e-2   & 1.00                       &    $(4, 12)$ & $160$  & 3.17e-2   & 1.02                 &     $(4, 80)$ & $1044$  &3.02e-2    & 1.03\\
$(16, 16)$ & $800$  & 1.05e-2  & 1.00                    &    $(8, 24)$ & $608$  & 1.57e-2  & 1.01                 &     $(8, 160)$ & $4008$
& 1.49e-2   & 1.02\\
$(32, 32)$ & $3136$  & 5.25e-3  & 1.00                    &    $(16, 48)$ & $2368$  & 7.84e-3  & 1.00                 &    $(16, 320)$ & $15696$  &7.45e-3  & 1.00\\
$(64, 64)$ & $12416$  & 2.63e-3  & 1.00                   &    $(32, 96)$ & $9344$  & 3.92e-3  & 1.00                &     $(32,640)$ & $62112$  & 3.72e-3  & 1.00   \\
$(64, 64)$ & $49408$  & 1.31e-3  & 1.00                   &    $(32, 96)$ & $37120$  & 1.96e-3  & 1.00                &     $(64,1280)$ & $247104$  & 1.86e-3  & 1.00  \\ \hline
\end{tabular}}
\end{table}

\vspace{5mm}

\noindent \emph{Acknowledgment. } The first author wishes to thank Professor Chunjia Bi for useful discussions.


\begin{thebibliography}{10}


\bibitem{Bank} R. E. Bank and D. J. Rose, Some error estimates for the box method,
{\it SIAM J. Numer. Anal.}, 24 (1987), 777-787.

\bibitem{BG} C. Bi and V. Ginting, Two-grid finite volume element method for linear
and nonlinear elliptic problems, {\it Numer. Math.}, 108 (2007), 177-198.

\bibitem{BR} C. Bi and H. Rui, Uniform convergence of finite volume element method with
Crouzeix-Raviart element for non-self-adjoint and indefinite elliptic problems, {\it J. Comput. Appl. Math.}, 200 (2007), 555-565.




\bibitem{Cai} {Z. Cai, On the finite volume element method, {\it Numer. Math.}, 58 (1991), 713-735.}


%



\bibitem{CP1} P. Chatzipantelidis, A finite volume method based on the Crouzeix-Raviart element for elliptic PDE¡¯s in two dimensions, {\it  Numer. Math.}, 82
(1999), 409-432.

\bibitem{CP2} P. Chatzipantelidis, Finite volume methods for elliptic PDE¡¯s: a new approach, Math. Model. Numer. Anal. 36 (2002) 307-324.


\bibitem{Cl} L. Chen, A new class of high order finite volume methods for second order
  elliptic equations, {\it SIAM J. Numer. Anal.}, 47 (2010), 4021-4043.






\bibitem{CWX} Z. Chen, J. Wu and Y. Xu, Higher-order finite volume
methods for elliptic boundary value problem, {\it
Adv. Comput. Math}, 37 (2012), 191-253.

%

\bibitem{CXZ} Z. Chen, Y. Xu and Y. Zhang, A construction of higher-order finite volume methods, {\it
 Math. Comp.}, 84 (2015),  599-628.







\bibitem{ChouYe} S.-H. Chou and X. Ye, Unified analysis of finite volume
methods for second order elliptic problems, {\it SIAM J. Numer.
Anal.}, 45 (2007), 1639-1653.

\bibitem{Ciarlet}
P. G. Ciarlet, {\it The Finite Element Method for Elliptic Problems},
North-Holland, Amsterdam, 1978.

\bibitem{CR} M. Crouzeix and P.A. Raviart, Conforming and non-conforming finite element methods for solving the stationary Stokes equations, {\it RAIRO Anal.
Numer.} 7 (1973) 33-76.




\bibitem{Em} P, Emonot, Methodes de volumes elements finis: applications aux equations de Navier-Stokes et
resultats de convergence, Dissertation, Lyon  (1992)

\bibitem{ELL} R. Ewing, T. Lin and Y. Lin, On the accuracy of the finite volume element method based on
piecewise linear polynomials, {\it SIAM J. Numer. Anal.}, 39 (2002),
1865-1888.


\bibitem{F} I. Faille, A control volume method to solve an elliptic equation
on a two-dimensional irregular mesh, {\it Comput. Methods Appl.
Mech. Eng.}, 100(2) (1992), 275-290.

\bibitem{G} P. Grisvard,
\newblock {\em {Elliptic Problems in Nonsmooth Domains}}.
\newblock Pitman, Massachusetts, 1985.

\bibitem{H} W. Hackbusch, On first and second order box schemes, {\it Computing}, 41 (1989), 277-296.



\bibitem{HJ} R. A.~Horn and C. R.~Johnson,
\newblock {\em {Matrix Analysis}}.
\newblock Cambridge University Press, World Publishing Corp, 1985.






\bibitem{LeZ}
P. Lesaint and M. Zl$\acute{a}$mal, Convergence of the nonconforming Wilson element
for arbitrary quadrilateral meshes, {\it Numer. Math.}, 36 (1980), 33-52.

\bibitem{LiJian}
J. Li and Z. Chen, Optimal $L^2$, $H^1$ and $L^\infty$  analysis of finite volume methods for the stationary Navier-Stokes equations with large data, {\it Numer. Math.}, 1 (2014), 75-101.

\bibitem{L3} R. Li, Generalized difference methods for a nonlinear
Dirichlet problem, {\it SIAM J. Numer. Anal.}, 24 (1987), 77-88.


\bibitem{LCW}
R. Li, Z. Chen and W. Wu, {\it Generalized Difference Methods for
Differential Equations: Numerical Analysis of Finite Volume
Methods}, Marcel Dekker, New York, 2000.

\bibitem{LL2}
J. Lv and R. Li, $L^{2}$ error estimates and superconvergence of the finite volume element
methods on quadrilateral meshes, {\it Adv. Comput. Math.}, 37
(2012), 393-416.






\bibitem{Sch} T. Schmidt, Box schemes on quadrilateral meshes, {\it Computing}, 51 (1993), 271-292.

\bibitem{Sh} Z. Shi, A convergence condition for the quadrilateral Wilson element, {\it Numer. Math.}, 44 (1984), 349-361.



\bibitem{VM} H. Versteeg and W. Malalasekera, {\it An Introduction to Computational Fluid Dynamics: the Finite
Volume Method}, Prentice Hall, Englewood Cliffs, 2007.




\bibitem{XZ} J. Xu, and Q. Zou, Analysis of linear and quadratic simplicial finite volume methods
for elliptic equations, {\it Numer. Math.}, 111 (2009), 469-492.

\bibitem{ZZ} Z. Zhang  and Q. Zou, A Family of Finite Volume Schemes of Arbitrary Order
on Rectangular Meshes, { \it J. Scientific Computing}, 58 (2014), 308-330.

\bibitem{Z} M. Zl$\acute{a}$mal, On the finite element method, {\it Numer. Math.}, 12 (1968), 394-409.

\end{thebibliography}
\end{document}